\newcommand{\1}{\mathds{1}}
\newtheorem{theorem}{Theorem}
\newtheorem{condition}{Condition}
\newtheorem{lemma}{Lemma}
\newtheorem{corollary}{Corollary}
\theoremstyle{remark}
\newtheorem{remark}{Remark}
\newtheorem{example}{Example}
\renewcommand{\ge}{\geqslant}
\renewcommand{\geq}{\ge}
\renewcommand{\le}{\leqslant}
\renewcommand{\leq}{\le}
\newcommand{\vc}[1]{\bm{#1}}
\newcommand{\eps}{\varepsilon}
\newcommand{\reals}{\mathbb{R}}
\newcommand{\diff}{\mathrm{d}}
\newcommand{\point}{\,\cdot\,}
\newcommand{\ceil}[1]{\lceil{#1}\rceil}
\newcommand{\prob}{\operatorname{\mathbb{P}}}
\newcommand{\expec}{\mathbb{E}}
\newcommand{\var}{\operatorname{var}}
\newcommand{\cov}{\operatorname{cov}}
\newcommand{\dto}{\rightsquigarrow}
\newcommand{\abs}[1]{\left\lvert{#1}\right\rvert}
\newcommand{\norm}[1]{\left\lVert{#1}\right\rVert}
\newcommand{\je}[1]{\bgroup\color{Bittersweet}\small\sffamily [JE: {#1}]\egroup}
\newcommand{\js}[1]{\bgroup\color{MidnightBlue}\small\sffamily [JS: {#1}]\egroup}
\newcommand{\modiv}[1]{\bgroup\color{black}#1\egroup}
\begin{document}

\begin{frontmatter}
\title{Empirical tail copulas for functional data}
\runtitle{Empirical tail copulas for functional data}

\begin{aug}
\author[A]{\fnms{John H.\!\! J.} \snm{Einmahl}\ead[label=e1]{j.h.j.einmahl@uvt.nl}}
\and
\author[B]{\fnms{Johan} \snm{Segers}\ead[label=e2]{johan.segers@uclouvain.be}}

\address[A]{Department of Econometrics and OR and CentER, Tilburg University, \printead{e1}}
\address[B]{LIDAM/ISBA, UCLouvain, \printead{e2}}

\runauthor{J.H.J. Einmahl and J. Segers}
\end{aug}

\begin{abstract}
For multivariate distributions in the domain of attraction of a max-stable distribution, the tail copula and the stable tail dependence function are equivalent ways to capture the dependence in the upper tail. The empirical versions of these functions are rank-based estimators whose inflated estimation errors are known to converge weakly to a Gaussian process that is similar in structure to the weak limit of the empirical copula process. We extend this  multivariate result to continuous functional data by establishing the  asymptotic normality  of the estimators of the tail copula, uniformly over all finite subsets of at most $D$ points ($D$ fixed).
An application for testing tail copula stationarity is presented. The main tool for deriving the result is the uniform asymptotic normality of all the $D$-variate tail empirical processes. The proof of the main result is non-standard.
\end{abstract}

\begin{keyword}[class=MSC2020]
	\kwd[Primary ]{62M99}
	\kwd{62G05}
	\kwd{62G20}
	\kwd{62G30}
	\kwd{62G32}
	\kwd[; secondary ]{60F17}
	\kwd{60G70}
	\kwd[. JEL codes: ]{C13}
	\kwd{C14}
\end{keyword}

\begin{keyword}
	\kwd{extreme value statistics}
	\kwd{functional data}
	\kwd{tail empirical process}
	\kwd{tail dependence}
	\kwd{tail copula estimation}
	\kwd{uniform asymptotic normality}
\end{keyword}
\end{frontmatter}

\section{Introduction}

Consider the statistical theory of  extreme values for functional data taking values in $C([0,1])$, the space of continuous functions on the interval $[0,1]$. Max-stability in $C([0,1])$ was characterized in \citet{gine:1990} and  the corresponding domain of attraction conditions have been studied in \citet{dhl1}. Consistency of  extreme value index estimator functions and of estimators of the exponent measure was established in \citet{dhl2} and asymptotic normality of
extreme value index estimator functions was obtained in \citet{einmahl+l:2006}.

In this paper we extend the  setup of the latter two papers by replacing the interval $[0,1]$ (with absolute value metric)  with  a general, compact metric space $(T,\rho)$. Interesting choices for $T$ can be the unit square, the unit cube, or the unit hypercube in any finite dimension as well as a general, compact set in dimension two, three, or higher. Statistical inference is based on independent and identically distributed data that are random functions with values in $C(T)$. Now consider multivariate extreme value theory by restricting the data to a finite subset of $d$ points in $T$.  We assume the existence of the $d$-variate tail copula $R$ and the stable tail dependence function $l$, each of which determines the tail dependence structure of the multivariate data, see, e.g., \citet{dh:1998}, \citet{schmidt+s:2006}, \citet{peng+q:2008}, \citet{nikoloulopoulos+j+l:2009} and \citet{bucher+d:2013}. The asymptotic normality, uniform in the argument, of the usual rank-based estimator of $l$ can be found in, e.g., \citet{einmahl+k+s:2012}.

It is the main purpose of this paper to extend the latter multivariate result to random functions in $C(T)$ by considering the  asymptotic normality  of the rank-based estimators of the tail copula $R$, uniformly over all finite subsets of $T$ of at most $D$ points ($D$ fixed). To the best of our knowledge this is the first paper that establishes  asymptotic normality of estimators of the tail dependence structure for functional data in this general, nonparametric setting. Note that as a special case we obtain the uniform asymptotic normality of all estimated upper tail dependence coefficients (corresponding to two points in $T$). The main tool for the main result  is the uniform asymptotic normality of the multivariate tail empirical processes, defined for all $D$-tuples in $T$. This  result is of independent interest and generalizes a  similar result  for $D=1$ (and $T=[0,1]$) in \citet{einmahl+l:2006}.

As in the multivariate case, the limiting process for the tail copula estimators requires the existence and continuity of the partial derivatives of the tail copula which yields an unexpected, challenging problem for functional data. The problem arises since  points in $T$ that are close have, because of strong dependence due to continuity, a bivariate tail copula that is close to the non-differentiable ``comonotone" tail copula, being the minimum of the two coordinates. This phenomenon requires a novel, non-standard proof-technique  for  the main result.

The fact that our results are uniform over $T$ sets them apart from those that fix a finite vector of locations in $T$. Thanks to the uniformity, it becomes possible to treat functionals that depend on all $t \in T$ simultaneously, such as suprema and integrals. By way of motivation, some possible applications of our results are sketched below.
\begin{itemize}
	\item
	In order to detect non-stationarity of the extreme-value attractor of a random field over $T$, a subset of Euclidean space, \citet{dombry:2017} suggests to compute an integral $\hat I(t_0)$ of estimated tail dependence coefficients at pairs $(t_0,t) \in T^2$, for fixed $t_0$ and integrating over $t$ in a neighbourhood of $t_0$. Stationarity then yields that the function $t_0 \mapsto \hat I(t_0)$ is about constant. Our main result yields the asymptotic behavior of $\hat I(t_0)$.  We elaborate on this idea in Section \ref{sec:testStat} where we  present as a test statistic the range of a  discrete version of these integrals,   assuming that we observe the functional data only on a fine, but finite grid on $T=[0,1]$.    A similar integral, but then based on the notion of concurrence probabilities, is used in \citet{dombry+r+s:2018} to illustrate the spatial structure of extremes.
	\item
	\citet{koch:2017} introduces spatial risk measures, some of which can be expressed as double integrals of pairwise extremal coefficients over pairs of locations. Our weak convergence theory yields the asymptotic normality of nonparametric estimates of these risk measures.
	\item
	Matching nonparametric tail copula estimators to a parametric form yields minimum-distance estimators of parameters of tail dependence models. For multivariate data, such estimators are proposed and analyzed in \citet{einmahl+k+s:2012} and \citet{einmahl+k+s:2017}.  Our uniform central limit theorem implies the asymptotic normality  of such estimators in case of functional data.
	\item
	The goodness-of-fit of a parametric family of spatial tail dependence models may be assessed via  a  supremum over   $(t_1, \ldots, t_D) \in T^D$ of  Kolmogorov--Smirnov type test statistics based on  the difference between a nonparametric and a parametric estimator  of the $D$-variate tail copula.  Our results not only yield the asymptotic distribution of the test statistic under the null hypothesis, they even cover the setting of infill asymptotics, where the process is observed only on a finite, possibly random set $T_n \subset T$ of locations, with the property that $T_n$ becomes dense in $T$ as $n$ grows (formally, for every open subset $G \subset T$, the probability that $T_n$ intersects $G$ increases to $1$ as $n \to \infty$).
\end{itemize}

The paper is organized as follows. The joint asymptotic behavior of all the  multivariate tail empirical processes is studied in Section~\ref{sec:tailempproc}. The main result on the joint asymptotic behavior of all the empirical tail copulas is presented in Section~\ref{sec:emptailcop}.  The application to testing stationarity of the tail copula is developed in Section~\ref{sec:testStat}. The proofs of the results in Sections~\ref{sec:tailempproc} and~\ref{sec:emptailcop} are deferred to Sections~\ref{sec:proofs:tail} and~\ref{sec:proofs:emp}, respectively. Some auxiliary results are collected in Appendices~\ref{app:aux} and~\ref{app:testStat}.


\section{Tail empirical processes}
\label{sec:tailempproc}

Let $(T, \rho)$ be a compact metric space; in \citet{einmahl+l:2006}, the space $T$ is $[0, 1]$ with the absolute value metric. Let $C(T)$ be the space of continuous functions $f : T \to \reals$.
Actually the metric $\rho$ on $T$ does not play much of a role. It is merely introduced to have a complete, separable subset of $\ell^\infty(T)$, the space of bounded functions $f : T \to \reals$, facilitating the study of stochastic processes on $T$. Let $C(T)$ be equipped with the supremum norm and distance and the associated Borel $\sigma$-field.

Let $\xi_1, \xi_2, \ldots$ be i.i.d.\ random elements in $C(T)$. For $(t, x) \in T \times \reals$, define $F_t(x) = \prob\{ \xi_i(t) \le x \}$. Since the trajectories of $\xi_i$ are continuous, the map $t \mapsto F_t$ is continuous with respect to the topology of weak convergence, that is, if $t_n \to t$ in $T$ then $F_{t_n} \to F_t$ weakly as $n \to \infty$. We will consider the (inverse) probability integral transform at each coordinate $t \in T$, and this requires the following condition. 

\begin{condition}
	\label{cond:cont}
	The distribution functions $F_t$, $t \in T$, are continuous.
\end{condition}

Under Condition~\ref{cond:cont}, the random variables $U_i(t) = 1 - F_t(\xi_i(t))$ for $i = 1, 2, \ldots$ and $t \in T$ are uniformly distributed on $(0, 1)$. Moreover, the map $t \mapsto F_t$ is now continuous with respect to the supremum distance: if $t_n \to t$ in $T$ then $\sup_{x \in \reals} \lvert F_{t_n}(x) - F_t(x) \rvert \to 0$ as $n \to \infty$. The stochastic processes $U_1, U_2, \ldots$ are i.i.d.\ random elements of $C(T)$ too. Indeed, we have $U_i = 1-F(\xi_i)$ where $F : C(T) \to C(T)$ is defined by $(F(x))(t) = F_t(x(t))$ for $x \in C(T)$ and $t \in T$. The map $F$ is continuous and thus Borel measurable.


For $d = 1, 2, \ldots$ and $\vc{t} = (t_1, \ldots, t_d) \in T^d$, define the $d$-variate empirical distribution function $S_{n,\vc{t}}$ by
\begin{equation}
\label{eq:Snt}
	S_{n,\vc{t}}(\vc{u})
	= \frac{1}{n} \sum_{i=1}^n \1\{ \forall j = 1, \ldots, d : U_i(t_j) < u_j \}
\end{equation}
for $\vc{u} = (u_1, \ldots, u_d) \in [0, 1]^d$. Note that $S_{n,\vc{t}}$ involves the marginal distribution functions $F_{t_j}$, which are unknown. In Section~\ref{sec:emptailcop} we will replace these functions by their empirical counterparts. But to study the resulting statistic, we first need to consider the case where the functions $F_{t_j}$ are known.

Fix $D = 1, 2, \ldots$ and $M > 0$ and put $\mathcal{V} = \bigcup_{d = 1}^D \mathcal{V}_d$ where $\mathcal{V}_d = T^d \times [0, M]^d$ for $d = 1, \ldots, D$. Let $k=k_n \in (0, n]$ and consider
\begin{align}
	\label{eq:Gntx}
	G_{n,\vc{t}}(\vc{x})
	&=
	\tfrac{n}{k} S_{n,\vc{t}}(\tfrac{k}{n} \vc{x})
	=
	\frac{1}{k} \sum_{i=1}^n \1 \left\{
		\forall j = 1, \ldots, d: U_i(t_j) < \tfrac{k}{n} x_j
	\right\}, \\
	\label{eq:Entx}
	E_{n,\vc{t}}(\vc{x})
	&=
	\expec[G_{n,\vc{t}}(\vc{x})]
	=	
	\tfrac{n}{k} \prob\{\forall j = 1, \ldots, d: U_i(t_j) < \tfrac{k}{n} x_j\}
\end{align}
for $\vc{v} = (\vc{t}, \vc{x}) \in \mathcal{V}$. The tail empirical process $W_{n}$ indexed by $\mathcal{V}$ is defined by
\[
	W_{n}(\vc{v})
	=
	\sqrt{k} \left\{
		G_{n,\vc{t}}(\vc{x}) - E_{n,\vc{t}}(\vc{x})
	\right\},
	\qquad \vc{v} = (\vc{t},\vc{x}) \in \mathcal{V}.
\]

\begin{condition}
	\label{cond:kn}
	The sequence $k = k_n \in (0, n]$ is such that $k \to \infty$ and $k/n\to 0$ as $n \to \infty$.
\end{condition}

We will investigate weak convergence of $W_n$ in $\ell^\infty(\mathcal{V})$. To control its finite-dimensional distributions, we will assume the following tail dependence condition. Note that we need to specify it up to dimension $2D$ in order to control the relevant covariances.

\begin{condition}
	\label{cond:tailCopula}
	For all $m =  2, \ldots, 2D$ and all $(\vc{t}, \vc{x}) \in T^m \times [0, \infty)^m $, the following limit exists:
	\[
		R_{\vc{t}}(\vc{x})
		=
		\lim_{s \downarrow 0}
		s^{-1} \, \prob\{ U_1(t_1) \le sx_1, \ldots, U_1(t_m) \le sx_m \}.
	\]
\end{condition}

Observe that for $m = 1$, we have $R_t(x) = x$ for all $(t, x) \in T \times [0, \infty)$. For $D=1,2, \ldots$, the function $R_{\vc{t}} : ([0, \infty]^D \setminus \{\vc{\infty}\}) \to [0, \infty)$ is called the (lower and upper, respectively) tail copula of the random vectors $(U_1(t_1), \ldots, U_1(t_D))$ and $(\xi_1(t_1), \ldots, \xi_1(t_D))$.
It is the main subject of this paper.
The tail copula, together with the marginal distributions, determines a multivariate extreme-value distribution $G$, say. In other words, it determines the dependence structure of $G$ and hence the tail dependence structure of a probability distribution that is in the max-domain of  attraction of $G$. More explanations on and various properties and representations of the tail copula and the closely related stable tail dependence function defined in Remark~\ref{rem:stdf} below are given in, e.g., \citet{dh:1998}, \citet{nikoloulopoulos+j+l:2009}, \citet{ressel:2013}, \citet{mercadier+r:2019}, and \citet{falk:2019}.

The process $W_n$ is centered. Let us calculate its covariance function. Write $\vc{U}_1(\vc{t}) = (U_1(t_1), \ldots, U_1(t_m))$ for $\vc{t} \in T^m$ and let inequalities between vectors be meant coordinatewise.
For $\vc{v}_j = (\vc{t}_j, \vc{x}_j) \in \mathcal{V}$, with $j = 1, 2$, we have
\begin{align}
\nonumber
	\lefteqn{
		\expec[ W_n(\vc{v}_1) W_n(\vc{v}_2) ]
	} \\
\nonumber
	&=
	\tfrac{n}{k} \left[
		\prob\left\{
			\vc{U}_1(\vc{t}_1, \vc{t}_2)
			\le \tfrac{k}{n} (\vc{x}_1, \vc{x}_2)
		\right\}
		-
		\prob\left\{ \vc{U}_1(\vc{t}_1) \le \tfrac{k}{n} \vc{x}_1 \right\} \,
		\prob\left\{ \vc{U}_1(\vc{t}_2)\le \tfrac{k}{n} \vc{x}_2 \right\}
	\right] \\
\label{eq:covn}
	&\to
	R_{\vc{t}_1, \vc{t}_2}(\vc{x}_1, \vc{x}_2), \qquad n \to \infty,
\end{align}
in view of Conditions~\ref{cond:kn} and~\ref{cond:tailCopula}. Note that the vectors $\vc{v}_1$ and $\vc{v}_2$ may have different lengths, say $2d_1$ and $2d_2$, with $d_1,d_2\in \{1, \ldots, D\}$, and that the limit in Eq.~\eqref{eq:covn} involves the tail copula of Condition~\ref{cond:tailCopula} in dimension $m = d_1 + d_2$.

Let $W = (W(\vc{v}))_{\vc{v} \in \mathcal{V}}$ be a zero-mean Gaussian process defined on $\mathcal{V}$ with covariance function
\begin{equation}
\label{eq:cov}
	\expec[ W(\vc{v}_1) W(\vc{v}_2) ]
	=
	R_{\vc{t}_1, \vc{t}_2}(\vc{x}_1, \vc{x}_2),
	\qquad
	\vc{v}_j = (\vc{t}_j, \vc{x}_j) \in \mathcal{V}, \quad j = 1, 2.
\end{equation}
Because of \eqref{eq:covn}, the function $(\vc{v}_1, \vc{v}_2) \mapsto R_{\vc{t}_1, \vc{t}_2}(\vc{x}_1, \vc{x}_2)$ is the pointwise limit as $n \to \infty$ of a sequence of covariance functions on $\mathcal{V} \times \mathcal{V}$ and therefore positive semidefinite, so that a Gaussian process $W$ with covariance function in Eq.~\eqref{eq:cov} indeed exists. Further, note that $R_{\vc{t}, \vc{t}}(\vc{x}, \vc{x}) = R_{\vc{t}}(\vc{x}) = \var \{ W(\vc{v}) \}$. The standard deviation semimetric on $\mathcal{V}$ is
\begin{align}
\label{eq:rho2}
	\rho_{2}(\vc{v}_1, \vc{v}_2)
	&=
	\bigl(\expec[\{W(\vc{v}_1) - W(\vc{v}_2)\}^2]\bigr)^{1/2} \\
\nonumber
	&=
	\bigl( R_{\vc{t}_1}(\vc{x}_1) - 2 R_{\vc{t}_1, \vc{t}_2}(\vc{x}_1, \vc{x}_2) + R_{\vc{t}_2}(\vc{x}_2) \bigr)^{1/2}.
\end{align}

To show the asymptotic tightness of the processes $W_n$,
%
%
we need to control the local increments of $U_i$. The following condition is inspired by \citet[Eq.~(16)]{einmahl+l:2006} but is weaker, see Remark~\ref{rem:Esdelta} below.

\begin{condition}
	\label{cond:cover:T}
	There exist positive scalars $c_1$, $c_2$, $u_0$, and $\eps_0$, and, for all $\eps \in (0, \eps_0 ]$, a covering of $T = \bigcup_{j=1}^{N_{T,\eps}} T_{\eps,j}$ by sets $T_{\eps,j}$ such that for all $(u, \eps, j) \in (0, u_0] \times (0, \eps_0] \times \{1, \ldots, N_{T,\eps}\}$, we have
	\begin{equation}
	\label{eq:cond:cover:T}
		\prob \left[
			\sup_{t \in T_{\eps,j}} U_1(t) > u \exp(c_1 \eps^2)
			\, \left\vert \,
			\inf_{t \in T_{\eps,j}} U_1(t) \le u
			\right.
		\right]
		\le
		c_2 \eps^2
	\end{equation}
	and such that $\int_0^1 \sqrt{\log N_{T,\eps}} \, \diff \eps < \infty$.
\end{condition}

In \eqref{eq:cond:cover:T}, we could also replace $\eps^2$ by $\eps$ and require $\int_0^1 \sqrt{ \log N_{T,\eps^2} } \, \diff \eps < \infty$.

\begin{theorem}
	\label{thm:main}
	Let $\xi_1, \xi_2, \ldots$ be i.i.d.\ random elements in $C(T)$, with $(T, \rho)$ a compact metric space. Suppose Conditions~\ref{cond:cont} to~\ref{cond:cover:T} hold and let $D = 1, 2, \ldots$ and $M > 0$. Then the semimetric space $(\mathcal{V}, \rho_2)$ is totally bounded and there exists a version of $W$ of which almost all sample paths are uniformly $\rho_2$-continuous. Moreover, we have the weak convergence
	\[
		W_n \rightsquigarrow W
		\text{ as } n \to \infty
		\text{ in }  \ell^\infty(\mathcal{V}).
	\]
\end{theorem}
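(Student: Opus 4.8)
The plan is to establish weak convergence in $\ell^\infty(\mathcal{V})$ through the two classical ingredients---convergence of the finite-dimensional distributions and asymptotic $\rho_2$-equicontinuity---and to invoke the general criterion for weak convergence of processes indexed by a totally bounded semimetric space (as in van der Vaart and Wellner, Thm.~1.5.4 and Thm.~2.11.1). Observe that $W_n(\vc{v}) = \sum_{i=1}^n Z_{n,i}(\vc{v})$ with $Z_{n,i}(\vc{v}) = k^{-1/2}\bigl(\1\{\forall j:U_i(t_j)<\tfrac kn x_j\} - p_n(\vc{v})\bigr)$ and $p_n(\vc{v})=\prob\{\forall j:U_1(t_j)<\tfrac kn x_j\}$, so that $W_n$ is a normalized sum of a row-wise i.i.d., uniformly bounded, centered triangular array.

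For the finite-dimensional distributions I would apply the multivariate Lindeberg central limit theorem. Each summand is a centered indicator divided by $\sqrt k$, hence bounded by $k^{-1/2}$; since $k\to\infty$ the Lindeberg condition holds trivially. The covariances converge to $R_{\vc{t}_i,\vc{t}_j}(\vc{x}_i,\vc{x}_j)$ by the computation in~\eqref{eq:covn} (which already accommodates index points of different lengths through the tail copula in dimension $d_i+d_j$), matching the covariance of $W$ in~\eqref{eq:cov}.

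Next I would describe $\rho_2$ and prove total boundedness of $(\mathcal{V},\rho_2)$, treating each $\mathcal{V}_d$, $d=1,\dots,D$, separately. Writing $A_\ell=\{\forall j:U_1(t_{\ell,j})<\tfrac kn x_{\ell,j}\}$, the $L^2$-semimetric of the array satisfies $\rho_n^2(\vc{v}_1,\vc{v}_2)\le\tfrac nk\,\prob(A_1\triangle A_2)$, and this bound converges to $\rho_2^2(\vc{v}_1,\vc{v}_2)$. In the $\vc{x}$-direction the Lipschitz property of the tail copula gives $\rho_2^2((\vc{t},\vc{x}),(\vc{t},\vc{x}'))\le\norm{\vc{x}-\vc{x}'}_1$, so $\rho_2$ is $1/2$-Hölder in $\vc{x}$ and the corresponding covering number is polynomial. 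In the $\vc{t}$-direction I would use the covering $T=\bigcup_{j=1}^{N_{T,\eps}}T_{\eps,j}$ from Condition~\ref{cond:cover:T}. For $t,t'$ in a common cell and $u=\tfrac kn x$ with $x\in[0,M]$, the symmetric difference $A_1\triangle A_2$ forces $U_1$ to straddle $u$ inside $T_{\eps,j}$, whence $A_1\triangle A_2\subseteq\{\inf_{T_{\eps,j}}U_1\le u\}$. Splitting according to whether $\sup_{T_{\eps,j}}U_1$ exceeds $u\exp(c_1\eps^2)$ and using~\eqref{eq:cond:cover:T} together with the self-bounding consequence $\prob\{\inf_{T_{\eps,j}}U_1\le u\}\le Cu$ (obtained from~\eqref{eq:cond:cover:T} and $\prob\{\sup_{T_{\eps,j}}U_1\le v\}\le v$), one gets $\tfrac nk\,\prob(A_1\triangle A_2)\le C\eps^2$ uniformly in $n$ and $x$. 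Thus each cell has $\rho_2$-radius $O(\eps)$, and combining with the $\vc{x}$-direction yields finite covers and total boundedness.

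The main obstacle is the asymptotic equicontinuity, which I would obtain by chaining with respect to $\rho_n$. The estimate above shows that, uniformly in $n$, the $\rho_n$-entropy of $T^d$ is controlled by $N_{T,\eps}$ while the $\vc{x}$-direction contributes only polynomial entropy; the entropy integral $\int_0^\delta\sqrt{\log N(\eps,\mathcal{V},\rho_n)}\,\diff\eps$ is therefore dominated by $\int_0^\delta\sqrt{\log N_{T,\eps}}\,\diff\eps$ plus a $1/2$-Hölder term, both of which vanish as $\delta\downarrow0$ by the integrability imposed in Condition~\ref{cond:cover:T}. The delicate step---the one I expect to demand the most care---is the near-comonotone band: when $\sup_{T_{\eps,j}}U_1\in[u,u\exp(c_1\eps^2)]$ the oscillation of $U_1$ is too small to be seen by the conditional bound~\eqref{eq:cond:cover:T}, so this contribution must instead be absorbed through the marginal estimate $\prob\{U_1(t)\in[u,u\exp(c_1\eps^2)]\}=u(\exp(c_1\eps^2)-1)=O(\eps^2u)$. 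Controlling this band uniformly over all levels $x\in[0,M]$, over the finitely many dimensions $d=1,\dots,D$, and over the $n$-dependent semimetric $\rho_n$ (which merely converges to $\rho_2$) is where the bookkeeping concentrates. Assembling the chaining bound with the finite-dimensional convergence gives $W_n\dto W$ in $\ell^\infty(\mathcal{V})$, and the existence of a version of $W$ with uniformly $\rho_2$-continuous paths follows from total boundedness together with the Gaussian (Dudley) chaining bound.
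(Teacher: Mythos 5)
Your proposal is correct and follows essentially the same route as the paper: Lindeberg CLT for the finite-dimensional distributions, and tightness via a bracketing/chaining argument over the covering $T=\bigcup_j T_{\eps,j}$ of Condition~\ref{cond:cover:T} crossed with an $\eps^2$-grid in $\vc{x}$, with the increment probability split exactly as in the paper's Step~5 into the thin band (handled by the uniform marginal, giving $O(\eps^2)\cdot k/n$) and the large-oscillation event (handled by Condition~\ref{cond:cover:T} together with the $O(u)$ bound on $\prob[\inf_{t\in T_{\eps,j}}U_1(t)\le u]$, which is the paper's Lemma~\ref{lem:tail}). The only cosmetic difference is that you prove total boundedness of $(\mathcal{V},\rho_2)$ directly from these estimates, whereas the paper deduces it from tightness of the limit via \citet[Example~1.5.10]{vandervaart+w:1996}.
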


Note that, moreover, the processes $W_n$ are asymptotically uniformly equicontinuous in probability with respect to $\rho_2$, see Addendum~1.5.8 and Example~1.5.10 in \cite{vandervaart+w:1996}.


So far, the original metric $\rho$ on $T$ has played hardly a role. The covering sets $T_{\eps,j}$ in Condition~\ref{cond:cover:T} do not involve $\rho$ and uniform continuity of the sample paths of the limit process $W$ in Theorem~\ref{thm:main} is with respect to the standard deviation metric. We can make the link more explicit by requiring that the sets $T_{\eps,j}$ are balls in the original metric $\rho$.

\begin{condition}
	\label{cond:cover:balls}
	There exist positive scalars $c_1$, $c_2$, $u_0$ and $\eps_0$, and, for all $\eps \in (0, \eps_0]$, a scalar $\delta(\eps) > 0$, such that for all $(u, \eps, s) \in (0, u_0] \times (0, \eps_0] \times T$ we have
	\[
		\prob \left[
			\sup_{t \in T : \rho(s, t) \le \delta(\eps)} U_1(t)
				> u \exp(c_1 \eps^2)
			\, \left\vert \,
			\inf_{t \in T : \rho(s, t) \le \delta(\eps)} U_1(t)
				\le u
			\right.
		\right]
		\le c_2 \eps^2.
	\]
	Moreover, if $N_{T,\delta}$ denotes the minimal number of closed balls with radius $\delta$ needed to cover $T$, then we have $\int_0^1 \sqrt{\log N_{T,\delta(\eps)}} \, \diff \eps < \infty$.
\end{condition}

 Condition~\ref{cond:cover:balls} implies Condition~\ref{cond:cover:T}; the   sets $T_{\eps,j}$ in  Condition~\ref{cond:cover:T} can be chosen to be the $N_{T,\delta(\eps)}$ balls of radius $\delta(\eps)$ in
Condition~\ref{cond:cover:balls}. It is a rather weak condition. For natural examples it is amply satisfied,  see Examples \ref{exs} and \ref{exp} below.

\begin{corollary}
	\label{cor:main}
	The conclusions of Theorem~\ref{thm:main} continue to hold if Condition~\ref{cond:cover:T} is replaced by Condition~\ref{cond:cover:balls}. In that case, there exists for every $\eta > 0$ a scalar $\delta > 0$ such that for all $(s, t) \in T^2$ with $\rho(s, t) < \delta$, we have $1 - R_{s,t}(1, 1) < \eta$. As a consequence, the function $(\vc{t}, \vc{x}) \mapsto R_{\vc{t}}(\vc{x})$ on $T^d \times [0, \infty)^d$ equipped with the product topology induced by $\rho$ and the absolute value metric is jointly continuous.
\end{corollary}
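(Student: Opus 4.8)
The first assertion needs no work: as observed immediately before the statement, Condition~\ref{cond:cover:balls} implies Condition~\ref{cond:cover:T}, so Theorem~\ref{thm:main} applies verbatim. The content is in the bound on $1 - R_{s,t}(1,1)$ and its use for joint continuity. For the second assertion, the plan is to rewrite the target as a \emph{discordance} probability and control it with Condition~\ref{cond:cover:balls}. Since $U_1(s)$ is standard uniform, $\prob\{U_1(s) \le w\} = w$, and splitting on $\{U_1(t) \le w\}$ gives
\[
	1 - w^{-1}\prob\{U_1(s) \le w, U_1(t) \le w\}
	= w^{-1}\prob\{U_1(s) \le w, U_1(t) > w\},
\]
so $1 - R_{s,t}(1,1) = \lim_{w \downarrow 0} w^{-1}\prob\{U_1(s) \le w, U_1(t) > w\}$. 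Fixing $\eps \in (0,\eps_0]$, setting $\delta = \delta(\eps)$ and $B = \{t' \in T : \rho(s,t') \le \delta\}$ (so $s,t \in B$ once $\rho(s,t) \le \delta$), I would split the discordance event at level $w\exp(c_1\eps^2)$. The part with $w < U_1(t) \le w\exp(c_1\eps^2)$ has probability $w(\exp(c_1\eps^2)-1)$ by uniformity; the part with $U_1(t) > w\exp(c_1\eps^2)$ lies in $\{\inf_{t' \in B} U_1(t') \le w,\ \sup_{t' \in B} U_1(t') > w\exp(c_1\eps^2)\}$, which by the conditional bound of Condition~\ref{cond:cover:balls} (with $u = w \le u_0$) has probability at most $c_2\eps^2\,\prob\{\inf_{t' \in B} U_1(t') \le w\}$.

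The only delicate point, and the one I expect to be the main obstacle, is a bound on $w^{-1}\prob\{\inf_{t' \in B} U_1(t') \le w\}$ that does not blow up as the ball grows. I would resolve this by a self-bounding rearrangement: splitting $\{\inf_B U_1 \le w\}$ according to whether $\sup_B U_1 \le w\exp(c_1\eps^2)$ and reusing the conditional bound on the complementary piece gives
\[
	\prob\{\inf_{t' \in B} U_1(t') \le w\}\,(1 - c_2\eps^2)
	\le \prob\{\sup_{t' \in B} U_1(t') \le w\exp(c_1\eps^2)\}
	\le w\exp(c_1\eps^2),
\]
the last step from $\{\sup_B U_1 \le a\} \subseteq \{U_1(s) \le a\}$. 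Hence $w^{-1}\prob\{\inf_B U_1 \le w\} \le \exp(c_1\eps^2)/(1-c_2\eps^2)$, uniformly in $w$, in $s$, and in the radius. Combining the two contributions and letting $w \downarrow 0$ yields
\[
	1 - R_{s,t}(1,1)
	\le (\exp(c_1\eps^2) - 1) + c_2\eps^2\,\frac{\exp(c_1\eps^2)}{1 - c_2\eps^2}
	=: \phi(\eps),
\]
and since $\phi(\eps) \to 0$ as $\eps \downarrow 0$, choosing $\eps$ with $\phi(\eps) < \eta$ and then $\delta = \delta(\eps)$ settles the second assertion.

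For joint continuity I would separate the $\vc{x}$- and $\vc{t}$-dependence. Monotonicity and homogeneity of the tail copula give the uniform Lipschitz bound $\abs{R_{\vc{t}}(\vc{x}) - R_{\vc{t}}(\vc{y})} \le \sum_j \abs{x_j - y_j}$, so continuity in $\vc{x}$ is free. For continuity in $\vc{t}$ I would change one coordinate at a time: if $\vc{t}$ and $\vc{t}'$ differ only in position $j$, bounding the symmetric difference of the two defining events and using homogeneity to pass from level $wx_j$ to level $w$ in the discordance probability gives
\[
	\abs{R_{\vc{t}}(\vc{x}) - R_{\vc{t}'}(\vc{x})}
	\le 2\,x_j\,\bigl(1 - R_{t_j, t_j'}(1,1)\bigr).
\]
Telescoping over the $d$ coordinates bounds $\abs{R_{\vc{t}}(\vc{x}) - R_{\vc{t}^0}(\vc{x})}$ by $2\sum_j x_j\bigl(1 - R_{t_j, t_j^0}(1,1)\bigr)$, which is small once each $\rho(t_j, t_j^0)$ is small and $\vc{x}$ stays in a bounded neighbourhood of $\vc{x}^0$, by the second assertion; combined with the $\vc{x}$-Lipschitz bound this yields joint continuity at every $(\vc{t}^0, \vc{x}^0)$. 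Everything apart from the infimum estimate is bookkeeping with the homogeneity and monotonicity of the tail copula.
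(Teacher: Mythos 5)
Your proof is correct and follows essentially the same route as the paper: the discordance probability $w^{-1}\prob\{U_1(s)\le w,\ U_1(t)>w\}$ is split at the level $w\exp(c_1\eps^2)$, Condition~\ref{cond:cover:balls} controls the piece where $U_1(t)$ exceeds that level, and joint continuity follows by telescoping one coordinate of $\vc{t}$ at a time (the paper's Lemma~\ref{lem:R:incr} gives your increment bound with constant $1$ in place of your $2$, which is immaterial). Your ``self-bounding rearrangement'' for $w^{-1}\prob\{\inf_{t' \in B} U_1(t')\le w\}$ is exactly the argument of the paper's Lemma~\ref{lem:tail}, applied locally to the ball $B$ rather than invoked globally for $\inf_{r\in T}U_1(r)$; both yield the needed uniform bound.
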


\begin{remark}[Increments]
	\label{rem:Esdelta}
	We show that for $T = [0, 1]$ equipped with the absolute value metric, Condition~\ref{cond:cover:T} is more general than Eq.~(16) in \citet{einmahl+l:2006} (with $\beta = 0$).
Suppose that there exist constants $K, c > 0$ and $q > 1$ such that for all sufficiently small $u, \delta > 0$, we have, for all $s \in [0, 1]$,
	\begin{multline}
	\label{eq:einmahl+l:2006}
		\prob \left[
			\sup_{t \in [s, s+\delta]} \frac{\abs{U_1(t) - U_1(s)}}{U_1(t)}
			> K \{\log(\delta^{-1})\}^{-q}
			\, \left\vert \,
			\inf_{t \in [s, s+\delta]} U_1(t) \le u			
			\right.
		\right] \\
		\le
		c \{\log(\delta^{-1})\}^{-q}.
	\end{multline}
 Eq.~(16) in \citet{einmahl+l:2006} implies Eq.~\eqref{eq:einmahl+l:2006} with $q = 2$.
We show that Eq.~\eqref{eq:einmahl+l:2006} implies Condition~\ref{cond:cover:T}.
 For $\eps > 0$, put $\delta = \delta(\eps) = \exp(-\eps^{-2/q})$ and consider the covering $T_{\eps,j} = [(j-1)\delta, j\delta] \cap [0, 1]$ for $j = 1, \ldots, \lceil{1/\delta}\rceil$. The number of covering sets is $N_{T,\eps} = \lceil{1/\delta}\rceil = \lceil{\exp(\eps^{-2/q})}\rceil$, so that $\sqrt{\log N_{T,\eps}} = O(\eps^{-1/q})$ as $\eps \downarrow 0$, which is integrable near zero since $q > 1$. By definition, we have $\{\log(1/\delta)\}^{-q} = \eps^2$.
	  If there exists $\eta > 0$ such that $\sup_{t \in [s, s+\delta]} \abs{U_1(t) - U_1(s)} / U_1(t) \le \eta$, then $ U_1(s)/(1+\eta) \le U_1(t) \le  U_1(s)/(1-\eta)$ for all $t \in [s, s+\delta]$, and thus $\sup_{t \in [s, s+\delta]} U_1(t) \le \frac{1+\eta}{1-\eta} \inf_{t \in [s, s+\delta]} U_1(t)$. It follows that
	\begin{multline*}
		\prob \left[
			\sup_{t \in [s, s+\delta]} U_1(t) > \tfrac{1+\eta}{1-\eta} u
			\, \left\vert \,
			\inf_{t \in [s, s+\delta]} U_1(t) \le u
			\right.
		\right] \\
		\le
		\prob \left[
			\sup_{t \in [s, s+\delta]}
				\frac{\abs{U_1(t) - U_1(s)}}{U_1(t)} > \eta
			\, \left\vert \,
			\inf_{t \in [s, s+\delta]} U_1(t) \le u
			\right.
		\right].
	\end{multline*}
	Setting $\eta = K \eps^2$, we have $\exp(c_1 \eps^2) \ge \frac{1+\eta}{1-\eta}$ for $c_1 = 3K$ and all $\eps > 0$ small enough.  Condition~\ref{cond:cover:T} follows.
(Actually the stronger  Condition~\ref{cond:cover:balls} also readily follows, essentially by relabeling $[s,s+\delta]$ by $[s-\delta, s+\delta]$.) 
\end{remark}
\vspace{-.6 cm}
\begin{remark}[Function spaces]
	\label{rem:setup}
	Rather than with the space $C(T)$ of continuous functions on a compact metric space $(T, \rho)$, we could work with the space of uniformly continuous functions on a totally bounded semimetric space. However, by \citet[Example~18.7]{vandervaart:1998}, this would not really be more general, since we could always consider the completion of the space, which is compact, and extend the functions appropriately. To pass from a semimetric (allowing distinct points to be at distance zero) to a genuine metric can be achieved by considering the appropriate quotient space.
	
	We could even work with stochastic processes on $T$ whose common distribution is a tight Borel measure on $\ell^\infty(T)$, the space of bounded functions $f : T \to \reals$. However, by \citet[Addendum~1.5.8]{vandervaart+w:1996}, we could then find a semimetric $\rho$ on $T$ such that $(T, \rho)$ is totally bounded and such that the processes have uniformly $\rho$-continuous trajectories. By the previous paragraph, this could again be reduced to the case $C(T)$ after appropriate identifications.
\end{remark}
	




\section{Rank-based empirical tail copulas}
\label{sec:emptailcop}


Since we do not know the marginal  distribution functions $F_t$, we cannot compute $S_{n,\vc{t}}$ in Eq.~\eqref{eq:Snt} from the data. Replacing $F_t$ by the marginal empirical distribution function of the  random variables $\xi_1(t), \ldots, \xi_n(t)$ yields a rank-based estimator of the  tail copula. Let 
\begin{equation}\label{eq:ranks}
	\operatorname{rank}_{n,i}(t)
	= \sum_{\alpha = 1}^n \1\{ \xi_\alpha(t) \le \xi_i(t) \}
\end{equation}
denote the rank of $\xi_i(t)$ among $\xi_1(t), \ldots, \xi_n(t)$.
The empirical tail copula is defined as 
\begin{equation}
\label{eq:Rntx}
	\hat{R}_{n,\vc{t}}(\vc{x})
	=
	\frac{1}{k} \sum_{i=1}^n \1 \left\{
		\forall j = 1, \ldots, d:
		\operatorname{rank}_{n,i}(t_j) > n - k x_j + 1
	\right\}
\end{equation}
for $d$-tuples of coordinates $\vc{t} \in T^d$ and for $\vc{x} \in [0, \infty)^d$.
The empirical tail copula is a rank-based version of the random function $(\vc{t}, \vc{x}) \mapsto  G_{n,\vc{t}}(\vc{x})$ in Eq.~\eqref{eq:Gntx}.
Using $E_{n,\vc{t}}$ in Eq.~\eqref{eq:Entx} for centering, the penultimate  empirical tail copula process is defined as
\[
	\hat{W}_{n}(\vc{v})
	= \sqrt{k} \{ \hat{R}_{n,\vc{t}}(\vc{x}) - E_{n,\vc{t}}(\vc{x}) \},
	\qquad \vc{v} = (\vc{t}, \vc{x}) \in \mathcal{V}.
\]

For $\vc{t} \in T^d$ and $j \in \{1,\ldots,d\}$, let $\dot{R}_{\vc{t},j}(\vc{x})$ denote the partial derivative of $R_{\vc{t}}$ at $\vc{x}$ with respect to $x_j > 0$, provided the partial derivative exists. The standard deviation semimetric $\rho_2$ on $\mathcal{V}$ in Eq.~\eqref{eq:rho2} induces a semimetric $\rho_{2,T}$ on $T$ via
\begin{equation}
\label{eq:rho2T}
	\rho_{2,T}(s, t)
	= \rho_2((s, 1), (t, 1))
	= [2\{1-R_{s,t}(1, 1)\}]^{1/2},
	\qquad (s, t) \in T^2.
\end{equation}
The semimetric space $(T, \rho_{2,T})$ is totally bounded since $(\mathcal{V}, \rho_2)$ is totally bounded by Theorem~\ref{thm:main}. Note that this means that for every $\eps > 0$, there exists a finite set $T(\eps) = \{t_1,\ldots,t_n\} \subset T$ such that for every $t \in T$ there exists $t_j \in T(\eps)$ such that $R_{t,t_j}(1,1) \ge 1 - \eps$. The semimetric $\rho_{2,T}$ is a metric if and only if $R_{s,t}(1, 1) < 1$ whenever $s \ne t$. The (semi)metric space $(T, \rho_{2,T})$ is complete if for every sequence $(t_n)_n$ in $T$ with the property that $\lim_{n \to \infty} \inf_{k,\ell \ge n} R_{t_k,t_\ell}(1, 1) = 1$ there exists $t \in T$ such that $\lim_{n \to \infty} R_{t_k,t}(1, 1) = 1$.

\begin{condition}
	\label{cond:Rdot}
	The  tail copulas satisfy the following properties:
	\begin{enumerate}[(R1)]
		\item 
		 $R_{s,t}(1, 1) < 1$ for all $(s, t) \in T^2$ such that $s \ne t$;
		\item 
		the metric space $(T, \rho_{2,T})$ is complete;
		\item 
		for all $d = 1, \ldots, D$, for all $j = 1, \ldots, d$, and for all $\vc{t} \in T^d$ such that $t_{\alpha} \ne t_{\beta}$ for $\alpha \ne \beta$, the partial derivative $\dot{R}_{\vc{t},j}$ exists and is continuous on the set of $\vc{x} \in [0, \infty)^d$ such that $x_j > 0$.
	\end{enumerate}
\end{condition}

If $x_j = 0$, then $\dot{R}_{\vc{t},j}(\vc{x})$ is defined as the right-hand partial derivative, which exists since $R_{\vc{t}}$ is a concave function on $[0, \infty)^d$.

We will  need to control the difference between the penultimate tail copula $E_{n,\vc{t}}(\vc{x})$ and the tail copula $R_{\vc{t}}(\vc{x})$ itself. Define $E_{n,\vc{t}}(\vc{x}) = 0 = R_{\vc{t}}(\vc{x})$ if one or more coordinates of $\vc{x}$ are negative.

\begin{condition}
	\label{cond:bias}
	For all $d = 1, 2, \ldots, D$ and all $K, M > 0$, as $n \to \infty$,
	\begin{equation*}
		\sup_{\substack{\vc{t} \in T^d, \vc{x} \in [0, M]^d\\\vc{h} \in [-K, K]^d}}
		\Bigl\lvert
			\sqrt{k} \left\{
				E_{n,\vc{t}} \left(\vc{x} + \tfrac{1}{\sqrt{k}}\vc{h} \right)
				- E_{n,\vc{t}}(\vc{x})
			\right\}
			-
			\sqrt{k} \left\{
				R_{\vc{t}} \left(\vc{x} + \tfrac{1}{\sqrt{k}}\vc{h} \right)
				- R_{\vc{t}}(\vc{x})
			\right\}
		\Bigr\rvert
		\to 0.
	\end{equation*}
\end{condition}



It is notationally  convenient, natural, and not  restrictive to consider only tuples of points in $T^d$ with different coordinates. For given $d = 1, 2, \ldots$ and given $M > 0$, let $\mathcal{V}_d'$ be the set of points $(\vc{t},\vc{x}) \in T^d \times [0, M]^d$ such that all $d$ coordinates of $\vc{t}$ are different. As in Theorem~\ref{thm:main} we fix $D=1,2, \ldots$, and write   $\mathcal{V}' = \bigcup_{d=1}^D \mathcal{V}_d'$.


\begin{theorem}
	\label{thm:main:2}
	Consider the setting and conditions of Theorem~\ref{thm:main}. Assume in addition that Conditions~\ref{cond:Rdot} and~\ref{cond:bias} hold. Then  for all $d = 1, 2, \ldots, D$,
	\begin{equation}
	\label{eq:thm:main:2:expansion}
		\sup_{\vc{v} \in \mathcal{V}_d'} \, \Biggl\lvert
			\hat{W}_{n}(\vc{v}) - W_{n}(\vc{v}) + \sum_{j=1}^d \dot{R}_{\vc{t},j}(\vc{x}) \, W_{n}(t_j, x_j)
		\Biggr\rvert
		\stackrel{p}{\to} 0, \qquad n \to \infty.
	\end{equation}
	As a consequence, we have the  weak convergence
	\[
		\hat{W}_{n} \dto \hat{W}, \qquad n \to \infty
	\]
	in the space $\ell^\infty(\mathcal{V}')$, where
	\[
		\hat{W}(\vc{t}, \vc{x}) = W(\vc{v}) - \sum_{j=1}^d \dot{R}_{\vc{t},j}(\vc{x}) \, W(t_j, x_j)
	\]
	and $W$ is the centered Gaussian process in Theorem~\ref{thm:main}. Almost all trajectories of $\hat{W}$ are uniformly $\hat \rho_2$-continuous on $\mathcal{V}'$,
where
\[
	\hat{\rho}_2\bigl((\vc{s},\vc{x}),(\vc{t},\vc{y})\bigr)
	=
	\left( \expec[ \{\hat{W}(\vc{s},\vc{x}) - \hat{W}(\vc{t},\vc{y})\}^2 ] \right)^{1/2}
\]
is the    standard deviation semimetric based on $\hat W$.
\end{theorem}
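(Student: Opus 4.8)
The plan is to rewrite $\hat W_n$ as a fixed continuous linear functional of the known-margins process $W_n$, up to a uniform error that is negligible in probability, and then to transfer the weak convergence $W_n\dto W$ of Theorem~\ref{thm:main} through the continuous mapping theorem. The starting point is a representation identity. Since $F_t$ is continuous, the event $\{\operatorname{rank}_{n,i}(t_j) > n - kx_j + 1\}$ is, up to the rounding of $kx_j$, the event that $U_i(t_j)$ falls below the corresponding lower order statistic of $U_1(t_j),\dots,U_n(t_j)$. Writing $V_{n,t}(x) = \tfrac{n}{k}U_{(\lceil kx\rceil)}(t)$ for the standardized tail empirical quantile function (the generalized inverse of $x\mapsto G_{n,t}(x)$), this gives $\hat R_{n,\vc t}(\vc x) = G_{n,\vc t}\bigl(V_{n,t_1}(x_1),\dots,V_{n,t_d}(x_d)\bigr)$ up to a uniform $o_p(1/\sqrt k)$ from integer rounding, controlled by monotonicity of $G_{n,\vc t}$ (the boundary case $x_j = 0$, where $\hat R_{n,\vc t}(\vc x) = 0 = R_{\vc t}(\vc x)$, is trivial). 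Using the $d=1$ instance of Theorem~\ref{thm:main} (where $E_{n,t}(x) = R_t(x) = x$) and a uniform Vervaat-type inversion, I would establish the quantile expansion $\sqrt k\,(V_{n,t}(x) - x) = -W_n(t,x) + o_p(1)$, uniformly over $(t,x)\in T\times[0,M]$; in particular $\vc V_n(\vc x) - \vc x = O_p(1/\sqrt k)$ uniformly.

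With these tools I would split
\[
	\hat W_n(\vc v)
	= \sqrt k\bigl\{ G_{n,\vc t}(\vc V_n(\vc x)) - E_{n,\vc t}(\vc V_n(\vc x)) \bigr\}
	+ \sqrt k\bigl\{ E_{n,\vc t}(\vc V_n(\vc x)) - E_{n,\vc t}(\vc x) \bigr\}.
\]
The first summand equals $W_n(\vc t,\vc V_n(\vc x))$, and since $\rho_2\bigl((\vc t,\vc V_n(\vc x)),(\vc t,\vc x)\bigr)\pto 0$ uniformly (by the joint continuity of $R$ from Corollary~\ref{cor:main} and $\vc V_n(\vc x)\to\vc x$), the asymptotic uniform $\rho_2$-equicontinuity of $W_n$ (the remark following Theorem~\ref{thm:main}) yields that it equals $W_n(\vc v) + o_p(1)$ uniformly. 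For the second summand, Condition~\ref{cond:bias} replaces $E_{n,\vc t}$ by the tail copula $R_{\vc t}$, after which the quantile expansion reduces everything to the uniform first-order Taylor approximation
\[
	\sqrt k\bigl\{ R_{\vc t}(\vc x + \tfrac{1}{\sqrt k}\vc h_n) - R_{\vc t}(\vc x) \bigr\}
	- \sum_{j=1}^{d}\dot R_{\vc t,j}(\vc x)\,h_{n,j}
	\pto 0,
	\qquad h_{n,j} = -W_n(t_j,x_j) + o_p(1),
\]
uniformly over $\mathcal V_d'$. Combined with the preceding reductions, this is exactly the expansion~\eqref{eq:thm:main:2:expansion}.

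The main obstacle is precisely this deterministic uniform linearization, because $\mathcal V_d'$ contains tuples whose coordinates are arbitrarily close in $\rho_{2,T}$. I would split $\mathcal V_d'$ according to a threshold $\eta > 0$ on the minimal pairwise distance $\min_{\alpha\ne\beta}\rho_{2,T}(t_\alpha,t_\beta)$. On the separated part, the tuples lie in a compact subset of $T^d\times[0,M]^d$ with uniformly distinct coordinates (using completeness, Condition~\ref{cond:Rdot}(R2)), where $\dot R_{\vc t,\cdot}$ is jointly continuous---the continuity in $\vc x$ from Condition~\ref{cond:Rdot}(R3) upgrades to joint continuity because, for the concave maps $R_{\vc t}$, joint continuity of the values (Corollary~\ref{cor:main}) transfers to the gradients at interior differentiability points---hence uniformly continuous; since $h_{n,j} = O_p(1)$ and $0\le\dot R_{\vc t,j}\le 1$, the Taylor remainder is uniformly $o_p(1)$ there. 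The clustered part, where some $t_\alpha\approx t_\beta$ forces $R_{t_\alpha,t_\beta}$ close to the non-differentiable comonotone copula $\min$, is the non-standard core. Here no uniform modulus of continuity for the gradient is available near $x_\alpha = x_\beta$, so rather than expand I would collapse close coordinates: using the near-comonotonicity $R_{t_\alpha,t_\beta}(1,1)\approx 1$, both $\hat W_n$ and the correction term at a clustered tuple differ by a uniformly small amount (as $\eta\downarrow 0$, uniformly in $n$) from their values at the tuple in which the pair $(\alpha,\beta)$ is merged into a single coordinate carrying $x$-value $\min(x_\alpha,x_\beta)$; the equicontinuity of $W_n$ controls $W_n(t_\alpha,x_\alpha) - W_n(t_\beta,x_\beta)$, which multiplies the only jump of $\dot R_{\vc t,\alpha} + \dot R_{\vc t,\beta}$. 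Iterating this collapse reduces the effective dimension, the base case $d = 1$ being trivial since then $\hat R_{n,t}(x) = x + O(1/k)$ is essentially deterministic, so $\hat W_n(t,x)\pto 0$. Letting first $n\to\infty$ and then $\eta\downarrow 0$ disposes of the clustered contribution.

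Finally, given the expansion, the bounded linear operator $\Phi:\ell^\infty(\mathcal V)\to\ell^\infty(\mathcal V')$ defined by $(\Phi z)(\vc t,\vc x) = z(\vc t,\vc x) - \sum_{j=1}^{d}\dot R_{\vc t,j}(\vc x)\,z(t_j,x_j)$ satisfies $\hat W_n = \Phi(W_n) + o_p(1)$ uniformly. Since $W_n\dto W$ in $\ell^\infty(\mathcal V)$ by Theorem~\ref{thm:main} and $\Phi$ is continuous, the continuous mapping theorem gives $\Phi(W_n)\dto\Phi(W) = \hat W$, and Slutsky's lemma yields $\hat W_n\dto\hat W$ in $\ell^\infty(\mathcal V')$. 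The uniform $\hat\rho_2$-continuity of almost all trajectories of the centered Gaussian limit $\hat W$ is then automatic from the tightness of the weak limit \citep[Section~1.5]{vandervaart+w:1996}.
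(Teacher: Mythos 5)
Your overall architecture coincides with the paper's: the rank-to-quantile representation plus a Vervaat-type inversion (the paper's Lemma~\ref{lem:Wch} and Corollary~\ref{lem:Vervaat:Z}), the split into $W_n(\vc{t},\vc{K}_{n,\vc{t}}(\vc{x}))$ plus an $E_{n,\vc{t}}$-increment, Condition~\ref{cond:bias} and the Lipschitz property to reduce everything to a uniform first-order expansion of $R_{\vc{t}}$, and the continuous mapping theorem plus Slutsky for the weak convergence; all of that matches. The deviations occur exactly where the real work sits, and there your argument has gaps. The first is the region where some $x_j$ is small: your ``separated part'' invokes uniform continuity of the gradient on a compact set, but $[0,M]^d$ is not contained in $\{x_j>0\}$, which is the only region where Condition~\ref{cond:Rdot}(R3) gives continuity of $\dot R_{\vc{t},j}$; near $x_j=0$ no uniform modulus is available, so the compactness argument breaks down there. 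The paper disposes of $\{x_j\le\delta\}$ by a separate argument (Case~I), using $\rho_2((t,x),(t,0))=\sqrt{x}$ together with the bound $\dot R_{\vc{t},i}(\vc{x})\le R_{\vc{t}}(\vc{x})/x_i\le x_j/x_i$ from Lemma~\ref{lem:cute}; you need something of this kind before restricting to $[\delta,M]^d$.

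The more serious gap is in the collapse step. After factoring out a common value of $W_n$ over a tight cluster (which equicontinuity does provide), your claim that the correction term is uniformly close to its value at the merged tuple is equivalent to the uniform convergence of the summed partial derivatives $\dot R_{\vc{t},\alpha}(\vc{x})+\dot R_{\vc{t},\beta}(\vc{x})$ to the merged derivative as $\rho_{2,T}(t_\alpha,t_\beta)\downarrow0$, uniformly also near the diagonal $x_\alpha=x_\beta$ where the limiting copula is the non-differentiable minimum. This is precisely the non-standard crux of the theorem, and it does not follow from ``near-comonotonicity'' alone: pointwise convergence of concave functions does not transfer to their gradients at non-differentiability points of the limit, so the statement must be proved for the sum specifically. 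The paper does this without changing dimension: the mean value theorem along the cluster diagonal produces $\sum_{j\in D_{\alpha(1)}}\dot R_{\vc{t},j}(\vc{\Theta}_n)$ at an intermediate point, and the Euler identity~\eqref{eq:cuteid} rewrites this cluster sum in terms of $R_{\vc{t}}$ itself (uniformly continuous) and the derivatives with respect to the well-separated coordinates (controlled by~\eqref{d2} and Lemma~\ref{lem:dotR:unifcont}). Your iterated collapse could likely be made rigorous by the same identity, but as written the key uniform estimate is asserted rather than derived. A smaller point: thresholding the minimal pairwise distance does not by itself yield well-separated clusters, since closeness is not transitive; this is why the paper's Lemma~\ref{lem:partition} uses two scales, $\delta$ within components and $\delta/(d-1)$ between them.
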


Condition~\ref{cond:bias} controls the difference between the local increments of $E_{n,\vc{t}}$ and those of $R_{\vc{t}}$. The following condition is stronger and controls the global rate of convergence of $E_{n,\vc{t}}$ to $R_{\vc{t}}$.

\begin{condition}
	\label{cond:bias2}
	For all $M > 0$ and for all $d = 1, 2, \ldots, D$, we have
	\[
		\sup_{\vc{v} \in \mathcal{V}_d} \sqrt{k} \left\lvert
			E_{n,\vc{t}}(\vc{x}) - R_{\vc{t}}(\vc{x})
		\right\rvert \to 0, \qquad n \to \infty.
	\]
\end{condition}

This stronger condition allows us to center $\hat{R}_{n,\vc{t}}(\vc{x})$ with the target $R_{\vc{t}}(\vc{x})$ instead of $E_{n,\vc{t}}(\vc{x})$: the empirical tail copula process
is defined by
\begin{equation}
\label{eq:EmpTailCopProc}
	\sqrt{k} \{\hat{R}_{n,\vc{t}}(\vc{x}) - R_{\vc{t}}(\vc{x})\}, \qquad \vc{v} = (\vc{t}, \vc{x}) \in \mathcal{V}.
\end{equation}
The following corollary to Theorem \ref{thm:main:2} is the main result of the paper.

\begin{corollary}
	\label{cor:main2}
	If, in Theorem~\ref{thm:main:2}, we replace Condition~\ref{cond:bias} by the stronger Condition~\ref{cond:bias2}, then all conclusions in that theorem hold, and they continue to hold if the penultimate empirical tail copula process $\hat{W}_n$ is replaced by the empirical tail copula process in Eq.~\eqref{eq:EmpTailCopProc}.
\end{corollary}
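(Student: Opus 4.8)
The plan is to reduce everything to Theorem~\ref{thm:main:2}, by first checking that the stronger Condition~\ref{cond:bias2} entails the local Condition~\ref{cond:bias}, and then observing that the two centerings differ only by a deterministic term that vanishes uniformly at rate $\sqrt k$.

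First I would verify that Condition~\ref{cond:bias2} implies Condition~\ref{cond:bias}. Fix $d \in \{1, \ldots, D\}$ and $K, M > 0$. For $\vc{t} \in T^d$, $\vc{x} \in [0,M]^d$ and $\vc{h} \in [-K,K]^d$ I would rewrite the increment difference inside the supremum of Condition~\ref{cond:bias} as
\[
	\sqrt{k}\bigl\{ E_{n,\vc{t}}(\vc{x} + \tfrac{1}{\sqrt k}\vc{h}) - R_{\vc{t}}(\vc{x} + \tfrac{1}{\sqrt k}\vc{h}) \bigr\}
	-
	\sqrt{k}\bigl\{ E_{n,\vc{t}}(\vc{x}) - R_{\vc{t}}(\vc{x}) \bigr\}.
\]
For the part where $\vc{x} + \vc{h}/\sqrt k$ has a negative coordinate, both $E_{n,\vc{t}}$ and $R_{\vc{t}}$ vanish by the convention adopted before Condition~\ref{cond:bias}, so the first term drops out; otherwise $\vc{x} + \vc{h}/\sqrt k$ lies in $[0, M+K]^d$ for all $n$ large enough (since $\sqrt k \ge 1$ eventually). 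Hence the absolute value is bounded by twice the bias supremum $\sup_{T^d \times [0, M+K]^d} \sqrt k \lvert E_{n,\vc{t}} - R_{\vc{t}}\rvert$, which tends to $0$ by Condition~\ref{cond:bias2} applied with $M+K$ in place of $M$. Consequently the supremum in Condition~\ref{cond:bias} tends to $0$, Theorem~\ref{thm:main:2} applies verbatim, and it delivers the expansion~\eqref{eq:thm:main:2:expansion}, the weak convergence $\hat{W}_n \dto \hat{W}$, and the uniform $\hat{\rho}_2$-continuity of the trajectories of $\hat W$.

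Second, and this is the only genuinely new point, I would treat the change of centering. Writing $\tilde{W}_n(\vc v) = \sqrt{k}\{\hat{R}_{n,\vc t}(\vc x) - R_{\vc t}(\vc x)\}$ for the empirical tail copula process of Eq.~\eqref{eq:EmpTailCopProc}, there is the exact identity
\[
	\tilde{W}_n(\vc v) - \hat{W}_n(\vc v)
	= \sqrt{k}\{ E_{n,\vc t}(\vc x) - R_{\vc t}(\vc x) \},
	\qquad \vc v = (\vc t, \vc x) \in \mathcal{V},
\]
whose right-hand side is deterministic and, by Condition~\ref{cond:bias2}, satisfies $\sup_{\vc v \in \mathcal{V}_d} \lvert \tilde{W}_n(\vc v) - \hat{W}_n(\vc v)\rvert \to 0$; since $\mathcal{V}_d' \subset \mathcal{V}_d$, this holds a fortiori over $\mathcal{V}_d'$. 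Substituting this into the expansion~\eqref{eq:thm:main:2:expansion} already established for $\hat{W}_n$ shows that the same expansion holds with $\tilde{W}_n$ in place of $\hat{W}_n$, namely $\sup_{\vc v \in \mathcal{V}_d'} \lvert \tilde{W}_n(\vc v) - W_n(\vc v) + \sum_{j=1}^d \dot{R}_{\vc t, j}(\vc x) W_n(t_j, x_j)\rvert \pto 0$. The weak convergence $\tilde{W}_n \dto \hat{W}$ in $\ell^\infty(\mathcal{V}')$ then follows either from this expansion by the same route as in Theorem~\ref{thm:main:2}, or most directly from $\tilde{W}_n = \hat{W}_n + o(1)$ uniformly together with $\hat{W}_n \dto \hat{W}$ and Slutsky's lemma for weak convergence in $\ell^\infty(\mathcal{V}')$; in particular the limit $\hat{W}$, and hence the semimetric $\hat{\rho}_2$, are unchanged.

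I do not expect a serious obstacle here: the corollary is a routine strengthening of Theorem~\ref{thm:main:2}, and the heavy lifting (the expansion and the tightness) has already been done. The only points requiring a little care are the domain bookkeeping in the first step, that is, handling the negative coordinates and enlarging the box from $M$ to $M+K$, and making sure the uniform convergence furnished by Condition~\ref{cond:bias2} is taken over a set containing $\mathcal{V}_d'$; everything else is inherited through the deterministic asymptotic equivalence $\tilde{W}_n = \hat{W}_n + o(1)$.
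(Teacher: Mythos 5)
Your proposal is correct and follows essentially the same route as the paper: Condition~\ref{cond:bias2} implies Condition~\ref{cond:bias} because the increment difference is bounded by twice the bias supremum over a slightly enlarged box (the paper uses $M+1$ with $k$ large where you use $M+K$), and the change of centering is the deterministic quantity $\sqrt{k}\{E_{n,\vc{t}}(\vc{x}) - R_{\vc{t}}(\vc{x})\}$, which vanishes uniformly under Condition~\ref{cond:bias2}, so the expansion and weak limit are inherited. No gaps.
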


\begin{remark}[Stable tail dependence function] \label{rem:stdf}
Fix  $D=1,2, \ldots \, .$ For $(\vc{t}, \vc{x}) \in T^D \times [0, \infty)^D$,   the stable tail dependence function (stdf) is given by
	\[
		l_{\vc{t}}(\vc{x})
		=
		\lim_{s \downarrow 0}
		s^{-1} \, \prob[ U_1(t_1) \le sx_1 \mbox { or } \ldots \mbox { or }  U_1(t_D) \le sx_D ].
	\]
It is well-known that the stdf characterizes multivariate tail dependence, see, e.g.,  \citet{einmahl+k+s:2012}.
Since  the definition of  $l_{\vc{t}}(\vc{x})$ on $ T^D \times [0, \infty)^D$ involves the probability of a finite union of events it  can be obtained from all the tail copulas $R_{\vc{t}}$ on  $T^d, d=1, \ldots, D$, through the inclusion-exclusion principle, and the same relation holds between the rank-based estimator
\[
	\hat{l}_{n,\vc{t}}(\vc{x})
	=
	\frac{1}{k} \sum_{i=1}^n \1 \left\{
		\exists j = 1, \ldots, D:
		\operatorname{rank}_{n,i}(t_j) > n - k x_j + 1
	\right\}
\]
 of the stdf and all the $\hat R_{n,\vc{t}}$ \citep{chiapino+s+s:2019}. Hence we obtain immediately the result corresponding to Corollary 2 (in particular the weak convergence) for the empirical stdf process
$$\sqrt{k} \{\hat l_{n,\vc{t}}(\vc{x})-l_{\vc{t}}(\vc{x})\}, \qquad \vc{v} = (\vc{t}, \vc{x}) \in \mathcal{V}_{D}'.$$  That result would have been  another way to present the main result of this paper, but for notational ease we chose to focus on tail copulas.
\end{remark}

\begin{remark}[Challenges of the proof]
	\label{rem:challenges}
	The proof of Theorem~\ref{thm:main:2} is particularly challenging because  the partial derivatives $\dot{R}_{\vc{t},j}$ are not uniformly equicontinuous since some of the coordinates of $\vc{t}$ can be arbitrarily close together. This problem arises  for functional data only and asks for a novel approach.  It is easily illustrated for $d=2$. When $t_1$ is fixed and $t_2$ approaches $t_1$, then  $R_{t_1,t_2} $ approaches the non-differentiable comonotone tail copula $R_{t_1,t_1} $, given by $R_{t_1,t_1} (x_1,x_2)=x_1\wedge x_2$.
	Now consider an arbitrarily small neighborhood of the point $(x,x)$. Then if $t_2$ approaches $t_1$, the partial derivative $\dot R_{t_1, t_2,1}$ takes, in this neighborhood, values, that range from about 0 to about 1.
	We solve this problem essentially by taking directional derivatives  along the vector (1,1), that is, along the diagonal. When $d$ is arbitrary, these directional derivatives in the direction of a lower dimensional 1-vector, can be expressed in the ``smooth" partial derivatives with respect to the remaining coordinates and the tail copula itself, which opens the door towards the solution. This program is carried out in the treatment of the term $\Delta_{n,3}$ in the proof, more precisely in Case~II starting on page~\pageref{page:caseII}. In order to make this approach work various interesting lemmas on tail copulas and their partial derivatives are derived in Appendix~\ref{app:aux}.
\end{remark}

\begin{remark}[Empirical copulas]
	Taking $k=n$ instead of $k/n\to 0$, one could also consider empirical copulas rather than empirical tail copulas.  It is known from the multivariate case that the limiting process will be different then, because all $n$ data are taken into account which leads to a tied-down process, a bridge, rather than a Wiener process, as in Theorem~\ref{thm:main}.  This goes back to \cite{r73}; see \cite{segers2012} and the references therein.
To prove in the functional setting an analogue of Theorem \ref{thm:main:2}, however, is a new research project, since the proof techniques used here cannot be transferred, mainly   because the partial derivatives of a copula enjoy fewer regularity properties than those of a tail copula, since a copula is in general  not concave and not homogeneous.
\end{remark}

We conclude this section with two examples for which we work out the conditions in some detail. From these, more examples can be generated by transforming the trajectories $t \mapsto \xi(t)$ coordinate-wise to $t \mapsto f(t, \xi(t))$, where $f : T \times \reals \to \reals$ is continuous and $z \mapsto f(t, z)$ is increasing for every $t \in T$. Such a transformation only changes the marginal distributions $F_t$ but leaves the uniformized process $U_t = 1 - F_t(\xi(t))$ unaffected.

\begin{example}[Smith model]\label{exs}
	Let $T=[0,1]^r$ for some $r=1, 2, \ldots$ and let $\{\vc{S}_i, Y_i\}_{i \ge 1}$ be the points of a unit-rate homogeneous Poisson process on $\reals^r\times (0,\infty)$. Consider the process
	\[
		\xi(\vc{u})=\max_{i = 1, 2, \ldots} Y_i^{-1} f(\vc{S_i}-\vc{u}),
		\qquad \vc{u} \in [0,1]^r,
	\]
	where
	\[
		f(\vc{s})
		= (2\pi)^{-r/2}|\Sigma|^{-1/2}
		\exp\{-\tfrac{1}{2} \vc{s}' \Sigma^{-1}\vc{s}\},
		\qquad \vc{s}\in \reals^r,
	\]
	is a centered normal density, with $\Sigma$ an invertible $r \times r$  covariance matrix. The process $\xi$ follows the so-called Smith model, after \citet{smith:1990}. It has been applied and extended in \citet{coles:1993}, \citet{schlather:2002} and \citet{kabluchko+s+dh:2009}, among others, and is a special case of the Brown--Resnick process. The process $\xi$ is  a continuous, stationary, max-stable process with unit-Fr\'echet marginals $F_{\vc{u}}(z)=\exp(-1/z)$, $z>0$.

	The finite-dimensional distributions of $\xi$ can be computed via the probability that the Poisson process with points $(\vc{S}_i, Y_i)$ misses a certain subset of $\reals^r \times (0, \infty)$. Let $\vc{S}$ be a random vector with density $f$, i.e., a $N_r(\vc{0}, \Sigma)$ random vector, and define a stochastic process on $\reals^r$ by
	\[
		B(\vc{u}) = \exp \left(
			\vc{S}' \Sigma^{-1} \vc{u}
			- \tfrac{1}{2} \vc{u}' \Sigma^{-1} \vc{u}
		\right),
		\qquad \vc{u} \in \reals^r.
	\]
	Note that $B(\vc{u})$ is a log-normal random variable with unit expectation. We find, for $(\vc{u}_1, \ldots, \vc{u}_d) \in T^d$ and $(z_1, \ldots, z_d) \in (0, \infty)^d$, that
	\begin{align*}
		- \ln \prob\{\forall j = 1, \ldots, d : \xi(\vc{u}_j) \le z_j\}
		&=
		\int_{\vc{s} \in \reals^r} \max_{j=1,\ldots,d} \{z_j^{-1} f(\vc{s}-\vc{u}_j)\} \, \diff \vc{s} \\
		&=
		\expec \left[
			\max_{j=1,\ldots,d} z_j^{-1} B(\vc{u}_j)
		\right].
	\end{align*}
	The second identity follows from computing $f(\vc{s}-\vc{u}_j) / f(\vc{s})$ inside the maximum in the integral. The finite-dimensional distributions of $\xi$ are multivariate H\"usler--Reiss distributions \citep{husler+r:1989}. The last expression represents the finite-dimensional distributions via a $D$-norm \citep{falk:2019}.
	
	The tail copula of $\xi$ can be obtained from the finite-dimensional distributions via the inclusion--exclusion formula and the minimum-maximum identity: for $(\vc{u}_1,\ldots,\vc{u}_d) \in T^d$ and $\vc{x} \in [0, \infty)^d$
	\begin{align}
	\label{eq:R:Smith}
		R_{\vc{u}_1,\ldots,\vc{u}_d}(\vc{x})
		&=
		\int_{\vc{x} \in \reals^r} \min_{j=1,\ldots,d} \{x_j f(\vc{s}-\vc{u}_j)\} \, \diff \vc{s} \\
	\nonumber
		&=
		\expec \left[
			\min_{j=1,\ldots,d} x_j B(\vc{u}_j)
		\right].
	\end{align}
	The partial derivative of $R_{\vc{u}_1,\ldots,\vc{u}_d}(\vc{x})$ with respect to $x_j$ in a point $\vc{x}$ such that $x_j$ is positive is
	\[
		\dot{R}_{\vc{u}_1,\ldots,\vc{u}_d;j}(\vc{x})
		=
		\expec \left[ B(\vc{u}_j) \1 \left\{
			x_j B(\vc{u}_j) = \min_{\alpha=1,\ldots,d} x_\alpha B(\vc{u}_\alpha) \right\}
		\right]
	\]
	By the dominated convergence theorem, this expression is continuous in $\vc{x}$ such that $x_j > 0$.
	
	The bivariate tail copula can be further computed from the integral formula~\eqref{eq:R:Smith} via some elementary algebra: for $(\vc{u},\vc{v},x,y) \in T^2 \times (0, \infty)^2$,
	\[
		R_{\vc{u},\vc{v}}(x, y)
		= x \, \bar{\Phi}\left(
			\tfrac{a(\vc{u},\vc{v})}{2} + \tfrac{1}{a(\vc{u},\vc{v})} \ln(x/y)
		\right)
		+ y \, \bar{\Phi}\left(
			\tfrac{a(\vc{u},\vc{v})}{2} + \tfrac{1}{a(\vc{u},\vc{v})} \ln(y/x)
		\right)
	\]
	where $a(\vc{u},\vc{v}) = \{(\vc{u}-\vc{v})'\Sigma^{-1}(\vc{u}-\vc{v})\}^{1/2} $, while $\bar{\Phi} = 1 - \Phi$ and $\Phi$ is the standard normal distribution function. In particular, $R_{\vc{u},\vc{v}}(1, 1) = 2 \bar{\Phi}(a(\vc{u},\vc{v})/2)$, which is less than $1$ as soon as $\vc{u} \ne \vc{v}$. The standard deviation semimetric $\rho_{2,T}$ in \eqref{eq:rho2T} is equivalent to the Euclidean metric on $T$.
	
	We verify Condition~\ref{cond:bias2}. By the inclusion--exclusion formula, $\prob\{\forall j = 1, \ldots, d : \xi(\vc{u}_j) > z_j \}$ is a linear combination of expressions of the form $\exp[ - \expec\{\max_{j\in A} z_j^{-1} B(\vc{u}_j)\} ]$, where $A$ ranges over the non-empty subsets of $\{1,\ldots,d\}$. It follows that $E_{n,\vc{u}_1,\ldots,\vc{u}_d}(\vc{x})$ in \eqref{eq:Entx} is a linear combination of expressions of the form
	\[
		\exp( - \expec[\max_{j\in A} \{-\ln(1-\tfrac{k}{n}x_j) B(\vc{u}_j)\}] ).
	\]
	A Taylor expansion of the functions $h \mapsto \exp(-h)$ and $h \mapsto -\ln(1-h)$ as $h \to 0$ yields that the convergence rate of $E_{n,\vc{u}_1,\ldots,\vc{u}_d}(\vc{x})$ to $R_{\vc{u}_1,\ldots,\vc{u}_d}(\vc{x})$ is $O(k/n)$ uniformly in $T^d \times [0, M]^d$, for $M > 0$. It follows that Condition~\ref{cond:bias2} is fulfilled as soon as $k^{3/2} = o(n)$ as $n \to \infty$.
	
	We have thus verified that the conditions of Corollary~\ref{cor:main2} are satisfied, except for Condition~\ref{cond:cover:balls}. But the latter can be shown to hold for the uniformized version $U$ of $\xi$ using the arguments on page~476 of  \citet{einmahl+l:2006}.

	 Since the process $\xi$ in this example is max-stable, the tail copulas of its finite-dimensional distributions are linked to the Pickands dependence functions of these distributions. Using this additional knowledge, the latter functions can be estimated nonparametrically at rate $O_p(1/\sqrt{n})$, see for instance \citet{gudendorf+s:2012}. Our estimator is constructed to be valid for a much broader class of processes for which the tail copula only arises in the limit, and this explains its convergence rate $O_p(1/\sqrt{k})$.
\end{example}

\begin{example}[Pareto process]\label{exp}
Let $T=[0,1]$. Let $Y$ be a standard Pareto random variable, i.e., $\prob(Y \leq x) = 1-1/x$, for $x\geq 1$, and let $W'$ be a standard Wiener process on $[0,1]$.
Consider $B(t)=\exp\{W'(t)-t/2\}$, $t \in [0,1]$, a geometric Brownian motion. We have $B(t)>0$, $\expec\{B(t)\}=1$ for all $t \in [0,1]$,
and $\expec\{\sup_{t \in [0, 1]} B(t)\} < \infty$. Assume  $Y$ and $W'$ are independent.
Define
\[
	\xi(t) = Y B(t), \qquad t \in [0,1],
\]
a process introduced in \citet{gdhp:2004}; see \citet{ferreira+dh:2014} and \citet{dombry+r:2015} for related processes.
The process $\xi$ is continuous
and its finite-dimensional distributions can be found via Fubini's theorem, giving
\[
	\prob[\forall j = 1, \ldots, d: \xi(t_j) \ge y/x_j]
	= y^{-1} \, \expec[\min\{ y, x_1 B(t_1), \ldots, x_d B(t_d)\}]
\]
for $y, x_1, \ldots, x_d \in [0, \infty)$ and $t_1, \ldots, t_d \in T$. It follows that the tails of the margins of $F_t$ are very close to standard Pareto,
\[
	\forall m > 0, \qquad
	\sup_{t \in T} \left| 1 - F_t(y) - 1/y \right| = o(y^{-m}),
	\qquad y \to \infty,
\]
and that the tail copula is
\[
	R_{\vc{t}}(\vc{x}) = \expec[\min\{x_1 B(t_1), \ldots, x_d B(t_d)\}]
\]
for $d = 1, 2, \ldots$ and $(\vc{t},\vc{x}) \in T^d \times [0, \infty)^d$.
Moreover, 
\[
	\forall m > 0, \qquad
	\sup_{\vc{v} \in \mathcal{V}_d}
	\left| E_{n,{\vc{t}}}(\vc{x}) - R_{\vc{t}}(\vc{x}) \right|
	= o((k/n)^m), \qquad n \to \infty.
\]
Condition~\ref{cond:bias2} is thus satisfied as soon as $k^{1+\eta} = o(n)$ for some $\eta > 0$. If all coordinates $t_1, \ldots, t_d$ are distinct, the partial derivative of $R_{\vc{t}}(\vc{x})$ with respect to $x_j$ is
\[
	\dot{R}_{\vc{t},j}(\vc{x})
	=
	\expec\left[
		B_j(t_j) \,
		\1 \left\{
			x_j B(t_j) = \min_{\alpha=1,\ldots,d} x_\alpha B(t_\alpha)
		\right\}
	\right],
\]
which is continuous in points $\vc{x} \in [0, \infty)^d$ such that $x_j > 0$, thanks to the dominated convergence theorem. The bivariate tail copula is
	\[
R_{s,t}(x, y)
= x \, \bar{\Phi}\left(
\tfrac{\sqrt{|s-t|}}{2} + \tfrac{1}{\sqrt{|s-t|}} \ln(x/y)
\right)
+ y \, \bar{\Phi}\left(
\tfrac{\sqrt{|s-t|}}{2} + \tfrac{1}{\sqrt{|s-t|}} \ln(y/x)
\right)
\]
for $(s,t,x,y) \in [0,1]^2 \times (0, \infty)^2$. In particular, $R_{s,t}(1,1) = 2 \bar{\Phi}(\sqrt{|s-t|}/2)$, so that the semimetric $\rho_{2,T}$ on $T$ is equivalent to the absolute value metric.

Again Condition~\ref{cond:cover:balls} can be shown to hold for the uniformized version $U$ of $\xi$ using  arguments in \citet[pages 479--480]{einmahl+l:2006}.
\end{example}

%

\section{Testing stationarity}
\label{sec:testStat}

Under the conditions of Corollary~\ref{cor:main2}, the empirical tail copula process in \eqref{eq:EmpTailCopProc} converges weakly in the space $\ell^\infty(\mathcal{V}')$ to the centred, Gaussian process $\hat{W}$ described in Theorem~\ref{thm:main:2}. Here, we apply the result to implement a novel test for spatial stationarity of the tail copula in case $T = [0, 1]$. We evaluate the finite-sample performance of the test for the Smith model and the Pareto process in Examples~\ref{exs} and~\ref{exp} respectively by means of Monte Carlo simulations.

 The functional data $\xi_i$ are said to be tail copula stationary if
\begin{equation}
\label{eq:Rstationary}
	R_{t_1+h,\ldots,t_d+h}(\vc{x}) = R_{t_1,\ldots,t_d}(\vc{x})
\end{equation}
for any $d = 2, 3, \ldots$, any $\vc{t} \in [0, 1]^d$, any $\vc{x} \in [0, \infty)^d$ and any $h \in [-1, 1]$ such that $t_j + h \in [0, 1]$ for all $j = 1, \ldots, d$. Given 
a random sample $\xi_1,\ldots,\xi_n$ in $C([0, 1])$, we will use the empirical tail copula $\hat{R}_{n,\vc{t}}$ in \eqref{eq:Rntx} at threshold parameter~$k = k_n$ as in Condition~\ref{cond:kn} to test whether \eqref{eq:Rstationary} holds for $d = 2$ and $(x_1, x_2) = (1, 1)$. Further, we assume that we only observe $\xi_i(t)$ for $t \in \{r / N : r = 0, 1, \ldots, N\}$ for some integer $N \ge 3$.

For a positive integer $\Delta < N/2$ and for $r_0 \in \{\Delta, \ldots, N-\Delta\}$, put
\begin{equation}
\label{eq:InN}
	\hat{I}_n^{(N)}(r_0/N)
	= \frac{1}{2\Delta} \sum_{r : 1 \le |r - r_0| \le \Delta}
	\hat{R}_{n;r/N, r_0/N}(1, 1)
\end{equation}
and  define the test statistic 
\begin{equation}
\label{eq:DnN}
	D_n^{(N)} = \sqrt{k} \left[
	\max_{r_0 = \Delta, \ldots, N-\Delta} \hat{I}_n^{(N)}(r_0/N)
	-
	\min_{r_0 = \Delta, \ldots, N-\Delta} \hat{I}_n^{(N)}(r_0/N)
	\right].
\end{equation}
Assume the conditions of Corollary~\ref{cor:main2}. Under the null hypothesis of stationarity and for fixed $N$ and $\Delta$, the asymptotic distribution of the test statistic is
\begin{equation}
\label{eq:DnNDN}
	D_n^{(N)} \dto D^{(N)}
	=
	\max_{r_0 = \Delta, \ldots, N - \Delta} V^{(N)}(r_0/N)
	-
	\min_{r_0 = \Delta, \ldots, N - \Delta} V^{(N)}(r_0/N),
	\qquad n \to \infty,
\end{equation}
where, for $r_0 \in \{\Delta, \ldots, N-\Delta\}$,
\begin{equation}
\label{eq:VN}
	V^{(N)}(r_0/N)
	=
	\frac{1}{2\Delta} \sum_{r : 1 \le |r-r_0| \le \Delta}
	\hat{W}(r/N,r_0/N; 1, 1).
\end{equation}
and $\hat{W}$ is the limit process in Theorem~\ref{thm:main:2}. Corollary~\ref{cor:main2} even allows us to find the limit of $D_n^{(N)}$ in case $N = N_n \to \infty$ and $\Delta = \Delta_n \to \infty$ in such a way that $\Delta/N \to \delta \in (0, 1/2)$.  This is the more relevant and more interesting  case when considering functional data.  If the function $(s, t) \mapsto R_{s,t}(1, 1)$ is continuous with respect to the Euclidean metric on $[0, 1]^2$,   then
\[
	D_n^{(N)} \dto D
	=
	\sup_{t \in [\delta, 1-\delta]} V(t) - \inf_{t \in [\delta, 1-\delta]} V(t)
\]
where $V(t_0) = \frac{1}{2\delta} \int_{t_0-\delta}^{t_0+\delta} \hat{W}(t, t_0; 1, 1) \, \diff t$ for $t_0 \in [\delta, 1-\delta]$. A sketch of the proofs of these statements is given in Appendix~\ref{app:testStat}.

If the tail copula fails to be stationary, we can expect $D_n^{(N)} \to \infty$ in distribution. Therefore we reject the null hypothesis of stationarity for large values of $D_n^{(N)}$. To find the $p$-value of the observed test statistic, we rely on the asymptotic distribution under the null hypothesis  $D^{(N)}$, which is based on
a centered Gaussian random vector of dimension $N-2\Delta+1$. In view of the definition of $\hat{W}$ in Theorem~\ref{thm:main:2}, the covariance matrix of $V^{(N)}$ can be
expressed in terms of the tail dependence coefficients $R_{\vc{t}}(\vc{1})$ for $\vc{t} \in \{0, 1/N, \ldots, 1\}^d$ for $d \in \{2, 3, 4\}$ and the partial derivatives $\dot{R}_{t_1, t_2; j}(1, 1)$ for $(t_1, t_2) \in \{0, 1/N, \ldots, 1\}^2$ and $j \in \{1, 2\}$. These can be estimated consistently, again by Corollary~\ref{cor:main2}; for the partial derivatives use finite differencing at a bandwidth sequence that goes to zero more slowly than $k^{-1/2}$ and the fact that by stationarity, $\dot{R}_{t_1, t_2; j}(1, 1)$ only depends on $t_2 - t_1$. Under the null hypothesis, the covariance matrix of $V^{(N)}$ is Toeplitz, a fact which can be used in its estimation as well. The distribution function of $D^{(N)}$ at the estimated covariance matrix of $V^{(N)}$ can then be calculated in terms of the multivariate normal cumulative distribution function (cdf). For details, we refer to Appendix~\ref{app:testStat}.

We have implemented the test in \textsf{R} \citep{R}, relying on the package \textsf{mnormt} \citep{azzalini+g:2020} for calculating the multivariate normal cdf as detailed in Appendix~\ref{app:testStat}.
We then evaluated the test's finite-sample performance for the Smith model with $\Sigma = 1$ and the Pareto process in Examples~\ref{exs} and~\ref{exp}, respectively, both of which are  tail copula stationary.  For drawing samples from the Smith model, we used the package \textsf{spatialExtremes} \citep{ribatet:2020}. To assess the power of the test against alternatives, we simulated from the same processes but on distorted grids of the form $\{f_\theta(r/N) : r = 0, 1, \ldots, N\} \in [0, 1]$ where $f_{\theta}(t) = 	[(2 - \theta t) t] / [1 - \theta + \sqrt{(2 - \theta t) \theta t + (1 - \theta)^2}]$ for $\theta \in [0, 1]$. Note that $f_0(t) = t$ while $f_1(t) = \sqrt{(2 - t)t}$, the graph of a quarter circle with center $(1, 0)$ and radius $1$.

The results for $N = 20$, $\Delta = 2$, $n = 500$ and $k = 50$ are shown in Figure~\ref{fig:testStat}, with the Smith model on the left and the Pareto process on the right:
\begin{itemize}
	\item The top row shows the probability mass function (pmf) of the integer-valued random variable $2\Delta\sqrt{k} D_n^{(N)}$ under the null hypothesis together with the probability density function (pdf) of the random variable $2\Delta\sqrt{k} D^{(N)}$ with $D^{(N)}$ in \eqref{eq:DnNDN}; please see Appendix~\ref{app:testStat}
	for how we calculated the latter. Note that, despite the bell shape of the pdf, the limit variable is not Gaussian. The pmf is estimated on the basis of $5\,000$ random samples.
	\item The middle row shows PP-plots of the $p$-values based on the estimated covariance matrix of $V^{(N)}$ based on $1\,000$ samples under the null hypothesis.
	\item The bottom row shows the rejection probabilities at significance level $\alpha = 0.05$ at the null hypothesis ($\theta = 0$) and at alternatives ($\theta \in \{0.1, 0.2, \ldots, 1\}$) and this for $\Delta \in \{1, \ldots, 4\}$, based on $1\,000$ samples for each value of $\theta$. The null hypothesis is rejected if the $p$-value calculated using the estimated covariance matrix of $V^{(N)}$ does not exceed   $\alpha$.
\end{itemize}

 The asymptotic theory works well in the sense that  the continuous limit distribution of the test statistic matches its discrete finite-sample distribution quite closely. The $p$-values look reasonably uniform, especially for the Pareto process, despite the complicated setup and the fact that the covariance matrix of $V^{(N)}$ is re-estimated for each sample.  Also for the Smith process the fit is very good for the relevant small values of the significance level.  From the power plots, we see that the test picks up the alternative quite well.  Similar patterns appeared for other choices $n, k, N, \Delta$. Overall, the plots show that distributional approximations based on Corollary~\ref{cor:main2} are very useful already at moderate sample sizes.

\begin{figure}
\begin{center}
\begin{tabular}{@{}c@{}c}
\includegraphics[width=0.49\textwidth]{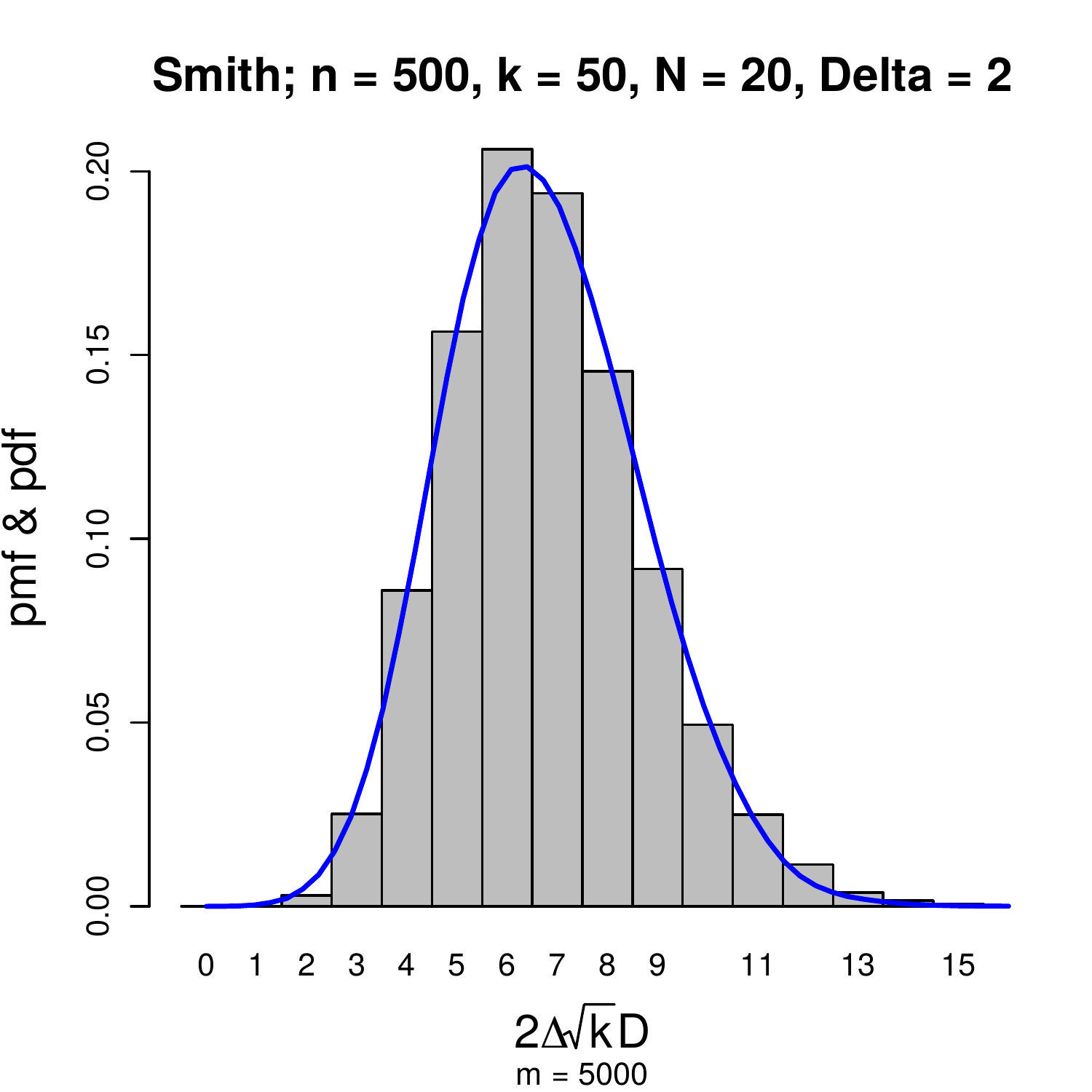}&
\includegraphics[width=0.49\textwidth]{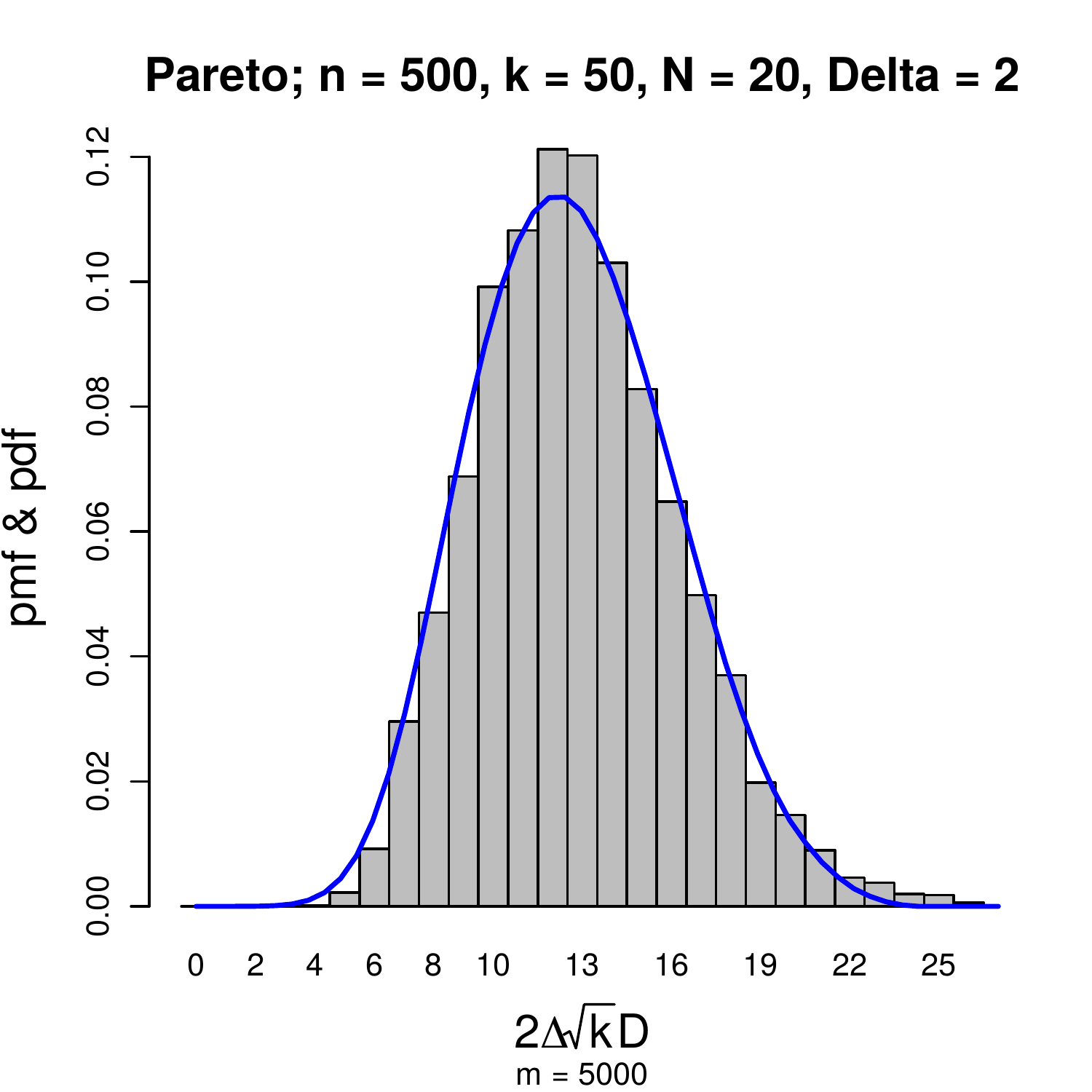}\\
\includegraphics[width=0.49\textwidth]{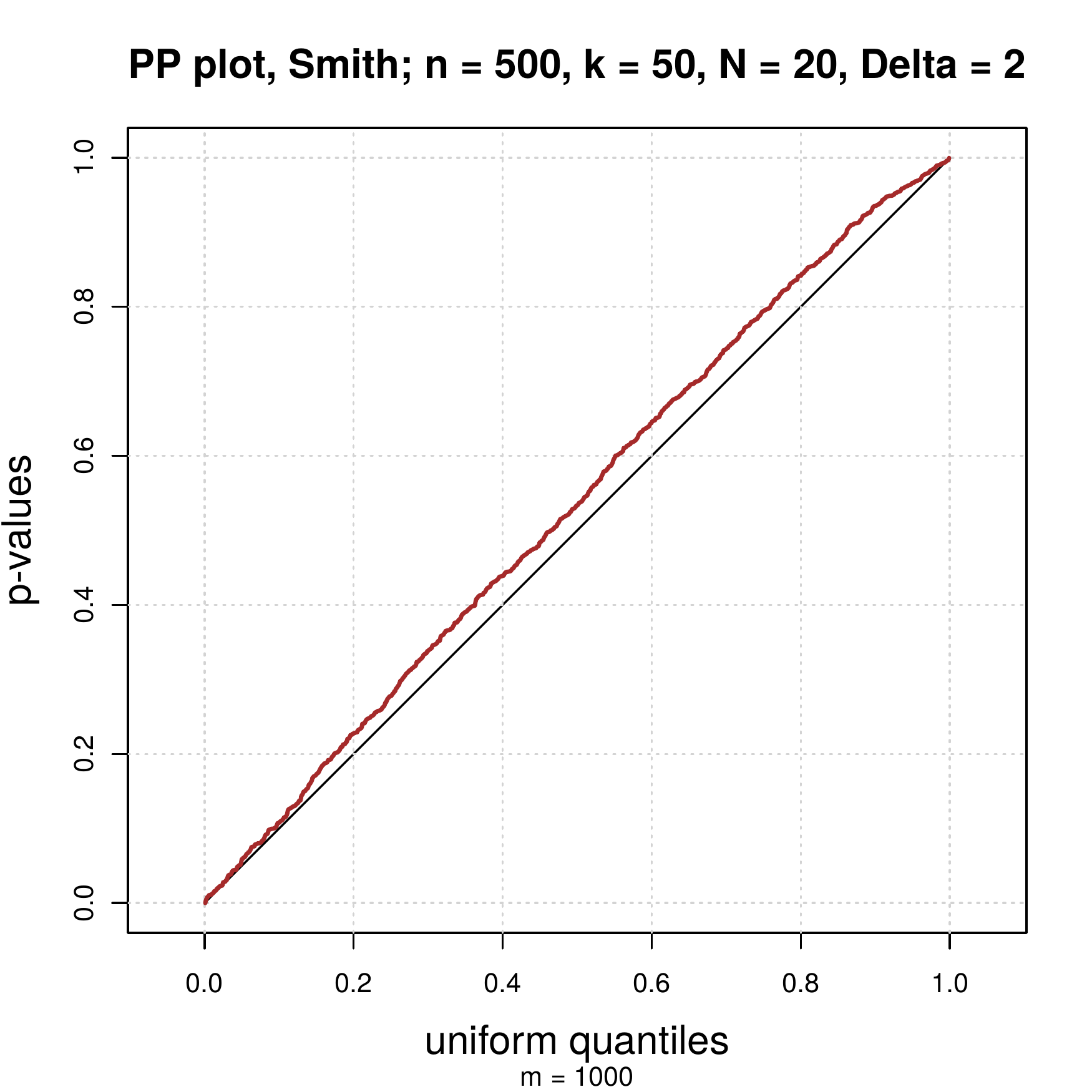}&
\includegraphics[width=0.49\textwidth]{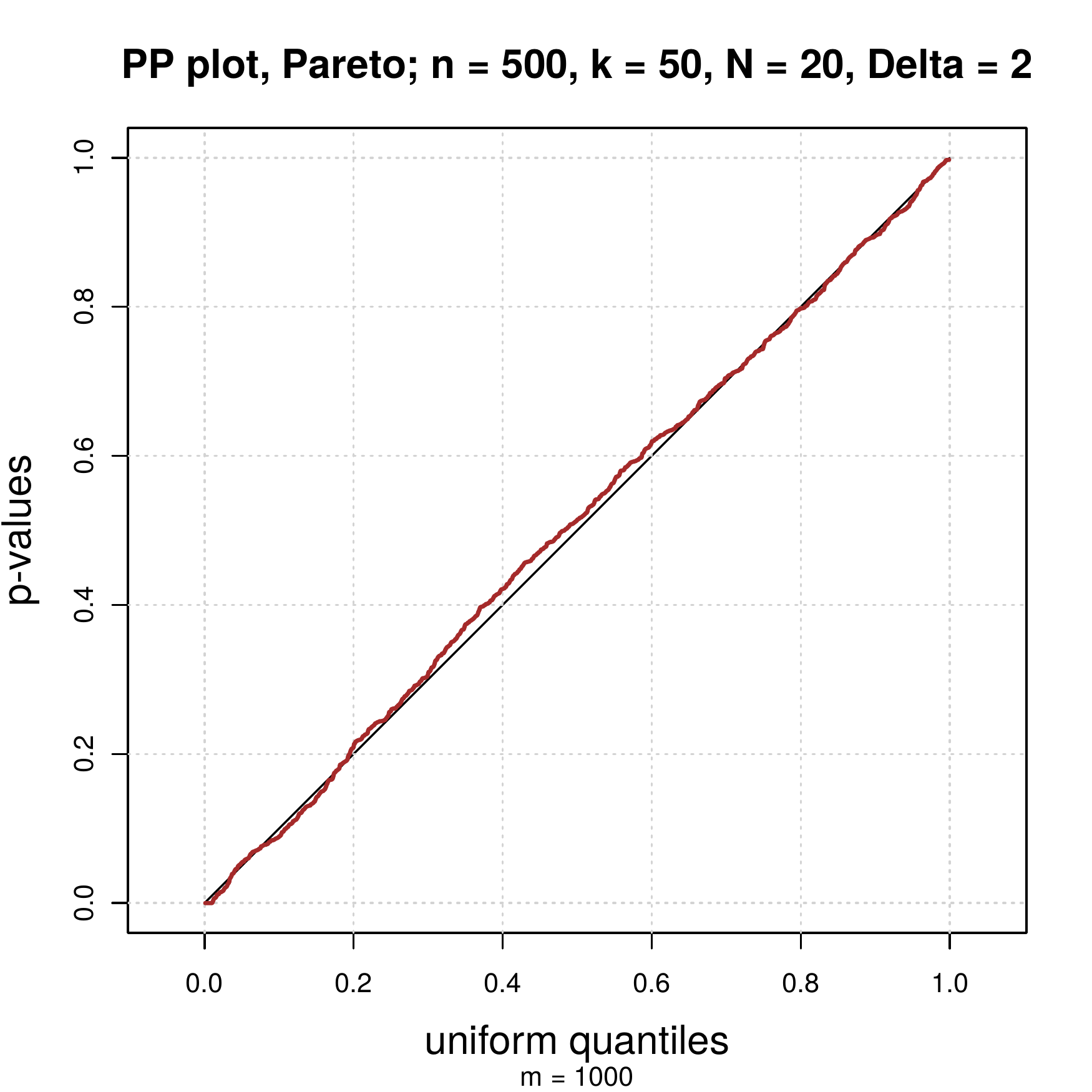}\\
\includegraphics[width=0.49\textwidth]{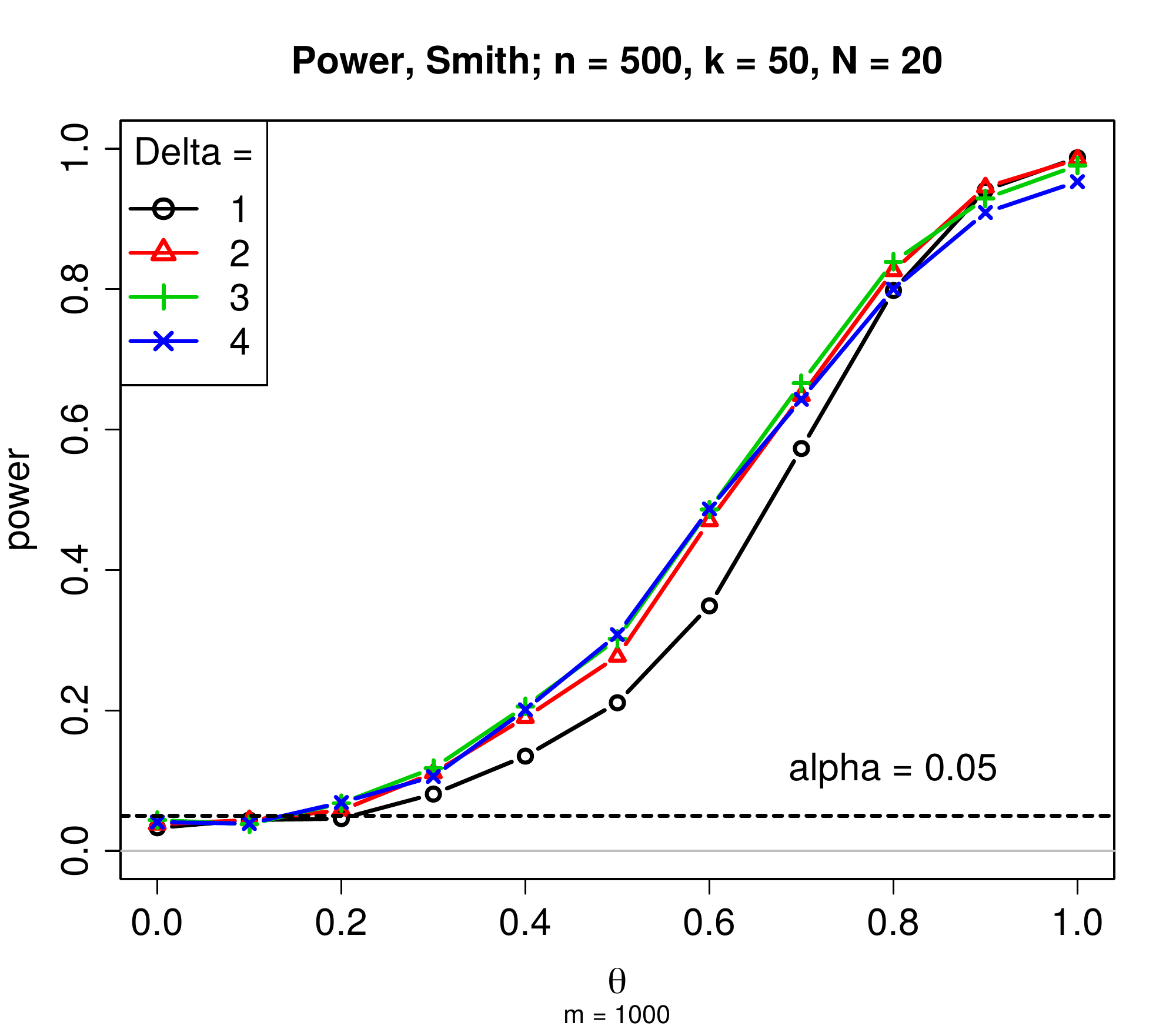}&
\includegraphics[width=0.49\textwidth]{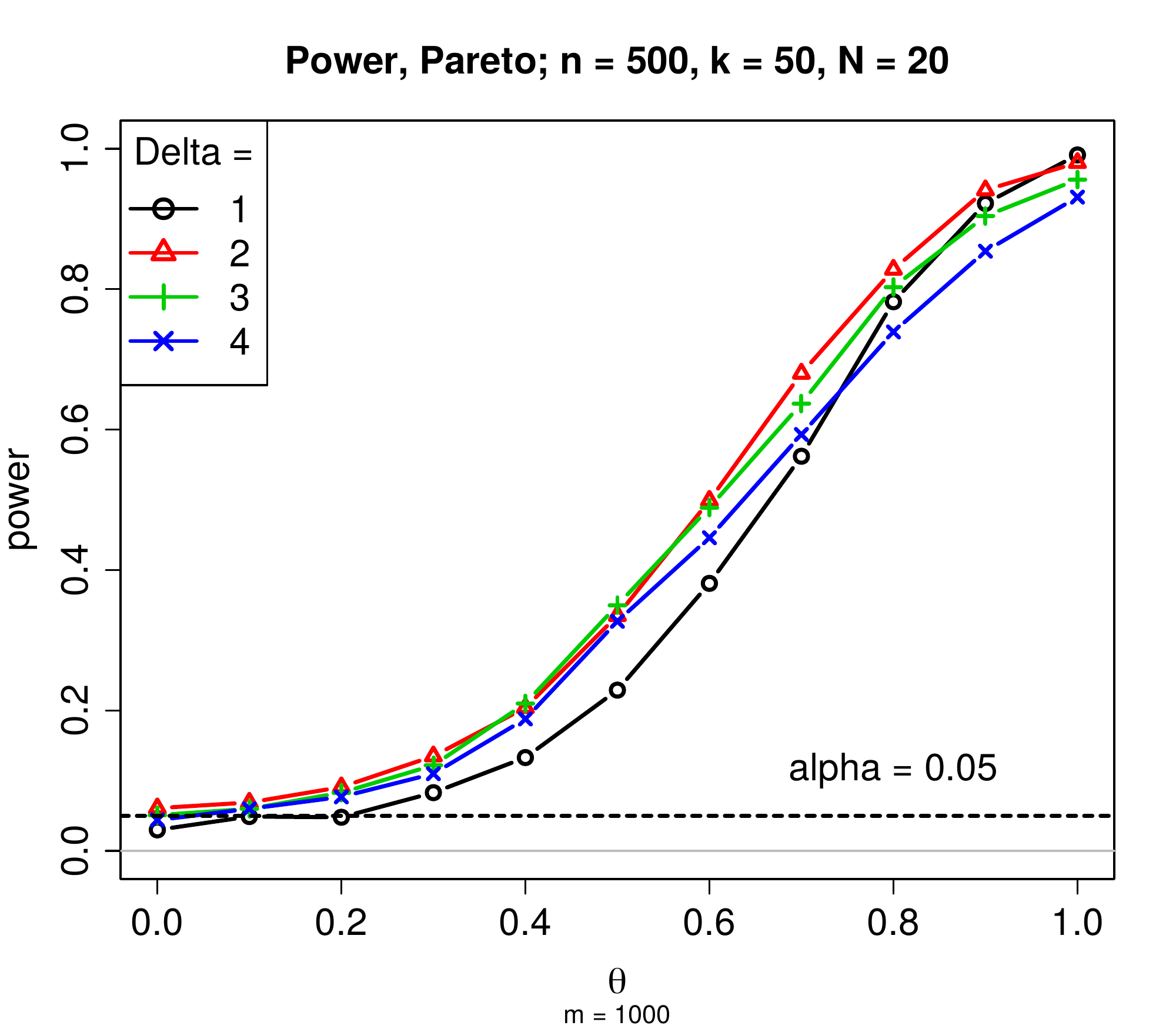}
\end{tabular}
\end{center}
\caption{\label{fig:testStat} Test of Section~\ref{sec:testStat} for tail copula stationarity for stochastic processes on $[0, 1]$ observed on $\{r/N : r = 0, 1, \ldots, N\}$ with $N = 20$. Sample size $n = 500$, threshold parameter $k = 50$. Left: Smith model (Example~\ref{exs}). Right: Pareto process (Example~\ref{exp}). Top: sampling distribution of the discrete test statistic under the null hypothesis together with the density of the limiting variable. Middle: PP-plot of the $p$-values of the test under the null hypothesis. Bottom: Powers at significance level $\alpha = 0.05$ against alternatives parametrized by $\theta \in [0, 1]$. Plots based on $5\,000$ samples at the top, $1\,000$ samples in the middle, and $1\,000$ samples per value of $\theta$ at the bottom.}
\end{figure}

\section{Proofs for the results of Section~\ref{sec:tailempproc}}
\label{sec:proofs:tail}

We first collect some results for the proof of Theorem~\ref{thm:main}.

\begin{lemma}
	\label{lem:tail}
	On some non-empty set $T$, let $U = (U(t))_{t \in T}$ be a stochastic process with uniform-$(0, 1)$ margins. Assume that there exists a finite covering $T = \bigcup_{j=1}^N T_j$ of $T$ and a scalar $\lambda > 1$ such that
	\begin{equation}
	\label{eq:lem:tail}
		\forall j = 1, \ldots, N, \qquad
		\liminf_{x \downarrow 0}
		\prob \left[
			\sup_{t \in T_j} U(t) \le \lambda x
			\, \left\vert \,
			\inf_{t \in T_j} U(t) \le x
		\right. \right]
		> 0.
	\end{equation}
	Then $\prob[ \inf_{t \in T} U(t) \le x] = O(x)$ as $x \downarrow 0$.
\end{lemma}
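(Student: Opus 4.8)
The plan is to reduce the statement to the individual covering sets and then exploit the uniform margins through a single fixed coordinate. Since the covering $T = \bigcup_{j=1}^N T_j$ is finite, we have $\inf_{t \in T} U(t) = \min_{j=1,\ldots,N} \inf_{t \in T_j} U(t)$, so a union bound gives $\prob[\inf_{t \in T} U(t) \le x] \le \sum_{j=1}^N \prob[\inf_{t \in T_j} U(t) \le x]$. It therefore suffices to prove $\prob[\inf_{t \in T_j} U(t) \le x] = O(x)$ for each fixed $j$; summing $N$ such bounds preserves the $O(x)$ rate.

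Fix $j$ and abbreviate $I_j = \inf_{t \in T_j} U(t)$ and $S_j = \sup_{t \in T_j} U(t)$. From the definition of $\liminf$, assumption~\eqref{eq:lem:tail} provides constants $c_j > 0$ and $x_j \in (0, 1/\lambda)$ such that $\prob[S_j \le \lambda x \mid I_j \le x] \ge c_j$ for all $x \in (0, x_j]$ with $\prob[I_j \le x] > 0$. Multiplying through by the conditioning probability yields the key inequality $c_j \, \prob[I_j \le x] \le \prob[S_j \le \lambda x, \, I_j \le x]$, which also holds trivially when $\prob[I_j \le x] = 0$.

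The crucial observation is that the event $\{S_j \le \lambda x\}$ is cheap to bound from above: it can be controlled by a single coordinate. Choosing any $t_0 \in T_j$ (possible as $T_j \ne \emptyset$), the bound $S_j \le \lambda x$ forces $U(t_0) \le S_j \le \lambda x$, so that $\prob[S_j \le \lambda x, \, I_j \le x] \le \prob[S_j \le \lambda x] \le \prob[U(t_0) \le \lambda x] = \lambda x$, where the last equality uses that $U(t_0)$ is uniform on $(0, 1)$ and $\lambda x < 1$. Combining this with the previous display gives $\prob[I_j \le x] \le (\lambda / c_j) \, x$ for all $x \in (0, x_j]$, i.e.\ $\prob[I_j \le x] = O(x)$, and summing over $j$ finishes the argument.

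I expect the only genuine subtlety to be measurability: for an arbitrary index set $T_j$ the supremum and infimum of $U$ over $T_j$ need not be measurable. This is harmless here, since the hypothesis~\eqref{eq:lem:tail} is already phrased in terms of probabilities of events involving these quantities, so measurability (or, failing that, a passage to outer probabilities, under which the union bound and monotonicity remain valid) is implicitly assumed; the entire chain of inequalities above uses only monotonicity of probability and the marginal law, both of which survive. No tightness, chaining, or concentration machinery is needed — the argument becomes elementary once one notices that dominating a supremum is no more costly than dominating one fixed marginal.
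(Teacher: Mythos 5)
Your proof is correct and follows essentially the same route as the paper's: a union bound over the finite covering, the observation that $\{\sup_{t \in T_j} U(t) \le \lambda x\}$ is contained in $\{U(t_0) \le \lambda x\}$ for any fixed $t_0 \in T_j$ (hence has probability at most $\lambda x$), and the lower bound on the conditional probability from \eqref{eq:lem:tail}. The only difference is cosmetic: the paper splits $\prob[A_j \le x]$ into two pieces and solves the resulting inequality, while you multiply the conditional bound through directly — algebraically the same argument.
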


\begin{proof}
	Write $A_j = \inf_{t \in T_j} U(t)$. Since
	\[
		\prob \left[ \inf_{t \in T} U(t) \le x \right]
		=
		\prob \left[ \min_{j=1,\ldots,T} A_j \le x \right]
		\le
		\sum_{j=1}^N \prob \left[ A_j \le x \right],
	\]
	it is sufficient to show that $\prob[ A_j \le x ] = O(x)$ as $x \downarrow 0$ for every $j = 1, \ldots, N$. Write $B_j = \sup_{t \in T_j} U(t)$ and note that $\prob[B_j \le y] \le \prob[U(t) \le y] \le y$ for $y \ge 0$, where $t \in T_j$ is arbitrary. Then
	\begin{align*}
		\prob\left[ A_j \le x \right]
		&=
		\prob\left[ A_j \le x, B_j \le \lambda x \right]
		+
		\prob\left[ A_j \le x, B_j > \lambda x \right] \\
		&\le
		\lambda x
		+
		\prob\left[ A_j \le x \right]
		\prob\left[ B_j > \lambda x \mid A_j \le x \right].
	\end{align*}
	Solving for $\prob\left[ A_j \le x \right]$ yields
	\[
		\prob\left[A_j \le x \right]
		\le
		\frac%
			{\lambda x}%
			{\prob \left[ B_j \le \lambda x \mid A_j \le x\right]}.
	\]
	For sufficiently small $x$, the denominator on the right-hand side is bounded away from zero by assumption.
\end{proof}

The following theorem is a corollary to Theorem~2.11.9 in \citet{vandervaart+w:1996}.

\begin{theorem}
	\label{thm:tight}
	For each $n$, let $Z_{n,1}, \ldots, Z_{n,n}$ be independent stochastic processes with finite second moments indexed by a set $\mathcal{F}$. Write $\norm{Z_{n,i}}_{\mathcal{F}} = \sup_{f \in \mathcal{F}} \abs{Z_{n,i}(f)}$. Suppose
	\begin{equation}
	\label{eq:tight:1}
		\forall \lambda > 0, \qquad
		\lim_{n \to \infty} \sum_{i=1}^n \expec^* \left[ \norm{Z_{n,i}}_{\mathcal{F}} \1 \{ \norm{Z_{n,i}}_{\mathcal{F}} > \lambda \} \right] = 0,
	\end{equation}
	and suppose that there exists $c > 0$ and, for all sufficiently small $\eps > 0$, a covering $\mathcal{F} = \bigcup_{j=1}^{N_\eps} \mathcal{F}_{\eps,j}$ of $\mathcal{F}$ by sets $\mathcal{F}_{\eps,j}$ such that, for every set $\mathcal{F}_{\eps,j}$ and every $n$,
	\begin{equation}
	\label{eq:ceps2}
		\sum_{i=1}^n \expec^* \left[
			\sup_{f,g \in \mathcal{F}_{\eps,j}} \abs{Z_{n,i}(f) - Z_{n,i}(g)}^2
		\right]
		\le c \eps^2
	\end{equation}
	and, for some $\delta > 0$, we have
	\begin{equation}
	\label{eq:entropy}
		\int_0^{\delta} \sqrt{\log N_\eps} \, \diff \eps < \infty.
	\end{equation}
	Then the sequence $\sum_{i=1}^n \{Z_{n,i} - \expec(Z_{n,i})\}$ is asymptotically tight in $\ell^\infty(\mathcal{F})$ and converges weakly provided the finite-dimensional distributions converge weakly.
\end{theorem}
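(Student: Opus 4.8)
The plan is to verify the hypotheses of Theorem~2.11.9 in \citet{vandervaart+w:1996}, which, for a totally bounded index semimetric $(\mathcal{F}, \rho)$, asserts precisely the claimed asymptotic tightness and, under convergence of the finite-dimensional distributions, weak convergence to a tight, $\rho$-uniformly continuous Gaussian limit. Its three hypotheses are: (i) the Lindeberg-type condition, which is verbatim Eq.~\eqref{eq:tight:1}; (ii) the $L_2$-equicontinuity condition $\sup_{\rho(f,g) < \delta_n} \sum_{i=1}^n \expec[(Z_{n,i}(f) - Z_{n,i}(g))^2] \to 0$ for every $\delta_n \downarrow 0$; and (iii) the random-entropy condition $\int_0^{\delta_n} \sqrt{\log N(\eps, \mathcal{F}, d_n)} \, \diff\eps \to 0$ in outer probability for every $\delta_n \downarrow 0$, where $d_n(f,g)^2 = \sum_{i=1}^n (Z_{n,i}(f) - Z_{n,i}(g))^2$ is the random $L_2$-semimetric. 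The task is therefore to manufacture a suitable $\rho$ and to derive (ii) and (iii) from the fixed covering in Eqs.~\eqref{eq:ceps2} and~\eqref{eq:entropy}.

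For the semimetric I would take $\rho(f,g) = \bigl(\sup_n \sum_{i=1}^n \expec[(Z_{n,i}(f) - Z_{n,i}(g))^2]\bigr)^{1/2}$, a genuine semimetric as a supremum of $L_2$-type semimetrics. If $f$ and $g$ lie in a common covering set $\mathcal{F}_{\eps,j}$, then Eq.~\eqref{eq:ceps2} bounds $\sum_{i} \expec[(Z_{n,i}(f) - Z_{n,i}(g))^2]$ by $c\eps^2$ for \emph{every} $n$, so $\rho(f,g) \le \sqrt{c}\,\eps$. Hence each of the $N_\eps$ covering sets sits inside a $\rho$-ball of radius $\sqrt{c}\,\eps$, which simultaneously shows that $(\mathcal{F}, \rho)$ is totally bounded, that $N(\sqrt{c}\,\eps, \mathcal{F}, \rho) \le N_\eps$ so that the deterministic entropy integral is finite by Eq.~\eqref{eq:entropy}, and that hypothesis~(ii) holds, since $\rho(f,g) < \delta_n$ already forces $\sum_i \expec[(Z_{n,i}(f)-Z_{n,i}(g))^2] \le \rho(f,g)^2 < \delta_n^2$.

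The remaining hypothesis~(iii) is where I expect the real difficulty. Writing $R_{\eps,j}^{(n)} = \sup_{f,g \in \mathcal{F}_{\eps,j}} d_n(f,g)^2$, Eq.~\eqref{eq:ceps2} controls only the \emph{mean} $\expec^*[R_{\eps,j}^{(n)}] \le c\eps^2$, whereas (iii) needs the random $d_n$-diameters $\sqrt{R_{\eps,j}^{(n)}}$ of all $N_\eps$ sets to be of order $\eps$ at once, uniformly over the scales $\eps \in (0,\delta_n]$; on such an event one has $N(\eps, \mathcal{F}, d_n) \le N_{\eps/c'}$ and the integral vanishes as $\delta_n \downarrow 0$ by Eq.~\eqref{eq:entropy}. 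The obstacle is that a plain union bound over the $N_\eps$ sets, fed by Markov's inequality applied to $\expec^*[R_{\eps,j}^{(n)}] \le c\eps^2$, is far too lossy: it forces a diameter threshold of order $\sqrt{N_\eps}\,\eps$ and thereby blows the entropy integral up rather than down. To defeat the union bound I would first truncate the summands at a level $\lambda$, which is harmless by the Lindeberg condition~\eqref{eq:tight:1}, turning each $R_{\eps,j}^{(n)}$ into a supremum of a sum of bounded independent terms, and then invoke a Talagrand-type concentration inequality so that its upper tail decays fast enough that the threshold need only be inflated by a polylogarithmic factor in $N_\eps$; this keeps the exceptional probabilities summable across dyadic scales while rendering the resulting dilation of $\int \sqrt{\log N_\eps}\,\diff\eps$ negligible. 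With (iii) secured, Theorem~2.11.9 delivers asymptotic tightness in $\ell^\infty(\mathcal{F})$, upgraded to weak convergence once the finite-dimensional distributions are assumed to converge.
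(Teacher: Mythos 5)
You have identified the right theorem but transcribed the wrong set of hypotheses, and this misreading generates the entire ``difficulty'' your proposal then tries to overcome. Theorem~2.11.9 in \citet{vandervaart+w:1996} is the \emph{bracketing} central limit theorem for triangular arrays: its third condition is $\int_0^{\delta_n} \sqrt{\log N_{[\,]}(\eps, \mathcal{F}, L_2^n)} \, \diff \eps \to 0$, where $N_{[\,]}(\eps, \mathcal{F}, L_2^n)$ is the minimal number of sets in a \emph{partition} $\mathcal{F} = \bigcup_j \mathcal{F}_{\eps,j}$ such that $\sum_{i=1}^n \expec^* [ \sup_{f,g \in \mathcal{F}_{\eps,j}} \abs{Z_{n,i}(f) - Z_{n,i}(g)}^2 ] \le \eps^2$ for every set. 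That is a condition on \emph{expected} squared diameters, which is exactly what Eq.~\eqref{eq:ceps2} supplies; one only needs to (a) refine the covering into a partition by taking successive differences, so that each partition set is contained in a covering set, and (b) absorb the constant $c$ by reindexing, giving $N_{[\,]}(\eps, \mathcal{F}, L_2^n) \le N_{\eps/\sqrt{c}}$ and hence a finite entropy integral by \eqref{eq:entropy}. The condition you state as (iii) --- convergence in outer probability of $\int_0^{\delta_n} \sqrt{\log N(\eps, \mathcal{F}, d_n)}\, \diff\eps$ for the random semimetric $d_n$ --- belongs to the random-entropy Theorem~2.11.1, not to 2.11.9. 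So no truncation and no Talagrand-type concentration are needed; the hypotheses of 2.11.9 are met essentially by inspection. (Your construction of $\rho$ and the verification of the $L_2$-equicontinuity condition are correct and would serve; the paper instead observes that because the partitions are independent of $n$, that middle condition is redundant and any semimetric making $\mathcal{F}$ totally bounded, even the trivial one, suffices.)

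Beyond being unnecessary, the repair you sketch for your version of (iii) would not close under the stated hypotheses. Controlling all $N_\eps$ random diameters simultaneously at the cost of a polylogarithmic inflation means replacing the bound $N(u, \mathcal{F}, d_n) \le N_{u/c'}$ by something like $N(u, \mathcal{F}, d_n) \le N_{\eps}$ only for $u \asymp \eps \sqrt{\log N_\eps}$; after the change of variables, finiteness of the resulting entropy integral requires roughly $\int_0^\delta \log N_\eps \, \diff\eps < \infty$, which is strictly stronger than \eqref{eq:entropy}. Moreover, after truncation at a fixed level $\lambda$ the Bernstein/Talagrand tail for $\sup_{f,g \in \mathcal{F}_{\eps,j}} d_n(f,g)^2$ forces a threshold containing a term of order $\lambda^2 \log N_\eps$, which does not scale like $\eps^2$ at small $\eps$ unless $\lambda$ is sent to zero with $n$ --- information the theorem's hypotheses do not provide. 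In short: the detour targets the wrong theorem, and as a proof of the random-entropy condition it is also incomplete.
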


\begin{proof}
	We show that the conditions of Theorem~2.11.9 in \citet{vandervaart+w:1996} are fulfilled. First, from a given covering, we can easily extract a partition meeting the same requirements by considering successive differences of the covering sets, and hence the sets in the  partition are subsets of the corresponding sets in the  covering.
	
	In comparison to the cited theorem, our partitions do not depend on $n$. As a consequence, the middle one of the three displayed conditions in that theorem is redundant. But then the semimetric $\rho$ on $\mathcal{F}$ does not play any role either and it can be any semimetric that makes $\mathcal{F}$ totally bounded; take for instance the trivial zero semimetric.
	
	Eq.~\eqref{eq:entropy} obviously implies that $\int_0^{\delta_n} \sqrt{\log N_\eps} \, \diff \eps \to 0$ for every $\delta_n \downarrow 0$.
	
	Finally, the upper bound in \eqref{eq:ceps2} has $c \eps^2$ rather than just $\eps^2$ as in the displayed equation just above the statement of Theorem~2.11.9 in~\cite{vandervaart+w:1996}. This makes no difference since we can just take the cover or partition associated to $\eps / \sqrt{c}$. Indeed, the bracketing number $N_{[\,]}(\eps, \mathcal{F}, L_2^n)$ in that theorem is then bounded by $N_{\eps / \sqrt{c}}$, and for sufficiently small $\delta > 0$, the integral $\int_0^{\sqrt{c} \delta} \sqrt{\log N_{\eps / \sqrt{c}}} \, \diff \eps = \sqrt{c} \int_0^\delta \sqrt{\log N_\eps} \, \diff \eps$ is finite.
\end{proof}

\begin{proof}[Proof of Theorem~\ref{thm:main}]
We will apply  Theorem~\ref{thm:tight}  with $\mathcal{F}$  equal to $\mathcal{V}$. 
Define stochastic processes $Z_{n,i}$ on $\mathcal{V}$ by
\[
Z_{n,i}(\vc{v})
= k^{-1/2} \1 \left\{ \forall j = 1, \ldots, d : U_i(t_j) \le \tfrac{k}{n} x_j \right\},
\qquad \vc{v} = (\vc{t}, \vc{x}) \in \mathcal{V}_d,
\]
so that $W_n(\vc{v}) = \sum_{i=1}^n [ Z_{n,i}(\vc{v}) - \expec\{Z_{n,i}(\vc{v})\} ]$.
\smallskip

\emph{Step~1: Finite-dimensional distributions.} ---
The finite-dimensional distributions of $W_n$ converge due to the Lindeberg central limit theorem. Indeed, $W_n$ is centered and the covariance function of $W_n$ converges pointwise to the one of $W$, see Eq.~\eqref{eq:covn}. On the one hand, if $\var\{W(\vc{v})\} = R_{\vc{t}}(\vc{x}) = 0$, then $\lim_{n \to \infty} \var(W_n(\vc{v})) = 0$ too, and thus $W_n(\vc{v})$ converges in probability to $W(\vc{v}) = 0$. On the other hand, if the limit variance is positive, then the Lindeberg condition is trivially fulfilled, since $0 \le Z_{n,i} \le k^{-1/2} \to 0$ as $n \to \infty$, so that for every $\eps > 0$, the indicators $\1 \{ \abs{Z_{n,i}(\vc{v}) - \expec[Z_{n,i}(\vc{v})]} \ge \eps \sqrt{\var W_n(\vc{v})} \}$ in the Lindeberg condition are all equal to zero for sufficiently large $n$.
\smallskip

\emph{Step~2: Checking Eq.~\eqref{eq:tight:1}.} ---
Fix $\lambda > 0$. Since $\sup_{\vc{v} \in \mathcal{V}} \abs{Z_{n,i}(\vc{v})} \le k^{-1/2} \to 0$ as $n \to \infty$ by Condition~\ref{cond:kn}, the indicator variable inside the outer expectation is zero for $n$ sufficiently large, such that $k^{-1/2} \le \lambda$.
\smallskip

\emph{Step~3: Constructing the covering.} ---
Let $\eps > 0$ be small. For each $d = 1, \ldots, D$, we construct a covering of $\mathcal{V}_d = \bigcup_{m=1}^{N_{d,\eps}} \mathcal{V}_d(\eps,m)$ such that, for every covering set $\mathcal{V}_d(\eps,m)$, we have
\[
\sum_{i=1}^n \expec^* \left[
\sup_{\vc{v}, \vc{w} \in \mathcal{V}_d(\eps,m)}
\abs{ Z_{n,i}(\vc{v}) - Z_{n,i}(\vc{w}) }^2
\right] \le c \eps^2
\]
for some $c > 0$ not depending on $\eps$ and such that $\int_0^\delta \sqrt{\log N_{d,\eps}} \, \diff \eps$ is finite for some $\delta > 0$. Uniting the coverings over $d = 1, \ldots, D$ yields a covering $\mathcal{V} = \bigcup_{d=1}^D \bigcup_{m=1}^{N_{d,\eps}} \mathcal{V}_d(\eps, m)$ by $N_{\eps} = \sum_{d=1}^D N_{d,\eps}$ sets.

Consider the covering sets $T_{\eps,j}$ for $j = 1, \ldots, N_{T,\eps}$ in Condition~\ref{cond:cover:T}. Further, let $[0, M] = \bigcup_{r=1}^{\lceil{M/\eps^2}\rceil} [l_{\eps,r}, u_{\eps,r}]$ be a covering of $[0, M]$ by intervals with lengths $u_{\eps,r} - l_{\eps,r}$ at most $\eps^2$. For $m \in \{1, \ldots, N_{T,\eps}\}^d \times \{1, \ldots, \lceil{M/\eps^2}\rceil\}^d = \mathcal{M}_\eps^d$, put
\begin{equation}
\label{eq:Vdem}
	\mathcal{V}_d(\eps,m)
	= \prod_{j=1}^d
	T_{\eps,{m_j}} \times
	\prod_{j=1}^d [l_{\eps,{m_{j+d}}}, u_{\eps,{m_{j+d}}}].
\end{equation}
For fixed $\eps > 0$, we have $\mathcal{V}_d = \bigcup_{m \in \mathcal{M}_\eps^d} \mathcal{V}_d(\eps,m)$.
\smallskip

	\emph{Step~4: Checking Eq.~\eqref{eq:entropy}.} ---
The number of covering sets is $N_\eps = \sum_{d=1}^D N_{d,\eps}$ where $N_{d,\eps} = \abs{\mathcal{M}_\eps^d} = (N_{T,\eps} \lceil{M/\eps^2}\rceil)^d$. Since $\sqrt{a + b} \le \sqrt{a} + \sqrt{b}$ for nonnegative $a$ and $b$, we have
\begin{align*}
	\sqrt{\log N_\eps}
	&\le \sqrt{\log(D N_{D,\eps})} \\
	&\le \sqrt{\log D} + \sqrt{D} \left(\sqrt{\log N_{T,\eps}} + \sqrt{\log(2M)} + \sqrt{2 \log(1/\eps)} \right).
\end{align*}
The condition $\int_0^{1} \sqrt{\log N_{T,\eps}} \, \diff \eps < \infty$ implies the same inequality for $N_{T,\eps}$ replaced by $N_\eps$.
\smallskip

	\emph{Step~5: Checking Ineq.\ \eqref{eq:ceps2}.} ---
We check that each covering set $\mathcal{V}_d(\eps,m)$, for $d = 1, \ldots, D$ and $m \in \mathcal{M}_\eps^d$, satisfies Ineq.~\eqref{eq:ceps2}, with $c > 0$ to be determined.
\smallskip

	\emph{Step~5.1: Bounding the supremum.} ---
	For $i = 1, \ldots, n$ and $j = 1, \ldots, d$, write
	\begin{align*}
		A_{i,j} &= \inf_{t \in T_{\eps, {m_j}}} U_i(t), &
		a_j &= \tfrac{k}{n} u_{\eps, {m_{j+d}}}, \\
		B_{i,j} &= \sup_{t \in T_{\eps, {m_j}}} U_i(t), &
		b_j &= \tfrac{k}{n} l_{\eps, {m_{j+d}}}.
	\end{align*}
	Put $\lambda = \exp(c_1 \eps^2)$ with $c_1$ as in Condition~\ref{cond:cover:T}. In view of the definition of $\mathcal{V}_d(\eps,m)$ in Eq.~\eqref{eq:Vdem}, we have
	\begin{align*}
		\sup_{\vc{v} \in \mathcal{V}_d(\eps,m)} Z_{n,i}(\vc{v})
		&=
		\sup_{\vc{v} \in \mathcal{V}_d(\eps,m)} k^{-1/2}
		\1 \{ \forall j : U_i(t_j) \le \tfrac{k}{n} x_j \} \\
		&\le
		k^{-1/2} \1 \{ \forall j : A_{i,j} \le a_j \} \\
		&\le
		k^{-1/2} \1 \{ \forall j : A_{i,j} \le a_j, B_{i,j} \le \lambda a_j \}
		+
		k^{-1/2} \1 \{ \exists j : A_{i,j} \le a_j, B_{i,j} > \lambda a_j \}.
	\end{align*}
	Similarly, we have
	\begin{align*}
		\inf_{\vc{v} \in \mathcal{V}_d(\eps,m)} Z_{n,i}(\vc{v})
		&=
		\inf_{\vc{v} \in \mathcal{V}_d(\eps,m)} k^{-1/2}
		\1 \{ \forall j : U_i(t_j) \le \tfrac{k}{n} x_j \} \\
		&\ge
		k^{-1/2} \1 \{ \forall j : B_{i,j} \le b_j \} \\
		&\ge
		k^{-1/2} \1 \{ \forall j : A_{i,j} \le \lambda^{-1} b_j, B_{i,j} \le b_j \}.
	\end{align*}
	Since $\lambda^{-1} b_j \le b_j \le a_j \le \lambda a_j$, we have
	\begin{align*}
		0
		&\le
		\1 \{ \forall j : A_{i,j} \le a_j, B_{i,j} \le \lambda a_j \}
		-
		\1 \{ \forall j : A_{i,j} \le \lambda^{-1} b_j, B_{i,j} \le b_j \} \\
		&\le
		\1 \{ \exists j : \lambda^{-1} b_j < A_{i,j} \le B_{i,j} \le \lambda a_j \}.
	\end{align*}
	It follows that
	\begin{align*}
	\lefteqn{
	\sup_{\vc{v}, \vc{w} \in \mathcal{V}_d(\eps,m)}
	\abs{ Z_{n,i}(\vc{v}) - Z_{n,i}(\vc{w}) }^2
	} \\
	&\le
	\abs{
		\sup_{\vc{v} \in \mathcal{V}_d(\eps,m)} Z_{n,i}(\vc{v})
		-
		\inf_{\vc{v} \in \mathcal{V}_d(\eps,m)} Z_{n,i}(\vc{v})
	}^2 \\
	&\le k^{-1}
	\left(
		\1 \{ \exists j : \lambda^{-1} b_j < A_{i,j} \le B_{i,j} \le \lambda a_j \}	 +
	 \1 \{ \exists j : A_{i,j} \le a_j, B_{i,j} > \lambda a_j \}
	 \right).
	\end{align*}
	\smallskip
	
	\emph{Step~5.2: Bounding the expectation.} ---
	By the previous step, we have
	\begin{multline}
	\label{eq:AAB}
		\sum_{i=1}^n \expec^* \left[
			\sup_{\vc{v}, \vc{w} \in \mathcal{V}_d(\eps,m)}
			\abs{ Z_{n,i}(\vc{v}) - Z_{n,i}(\vc{w}) }^2
		\right] \\
		\le
		\frac{n}{k} \sum_{j=1}^d \left[ \prob \left\{ \lambda^{-1} b_j < A_{1,j} \le B_{1,j} \le \lambda a_j \right\}
		+
		\prob \left\{ A_{i,j} \le a_j, \, B_{i,j} > \lambda a_j \right\}
		\right],
	\end{multline}
	since the stochastic processes $U_i$ are identically distributed. We treat each of the two probabilities in turn.
	
	First, picking any $t \in T_{\eps,{m_j}}$, we have $A_{1,j} \le U_1(t) \le B_{1,j}$ and thus, since $U_1(t)$ is uniformly distributed on $(0, 1)$, we have
	\begin{align*}
		\prob \left\{ \lambda^{-1} b_j < A_{1,j} \le B_{1,j} \le \lambda a_j \right\}
		&\le
		\prob \left\{ \lambda^{-1} b_j < U_1(t) \le \lambda a_j \right\} \\
		&\le
		\lambda a_j - \lambda^{-1} b_j \\
		&\le
		\tfrac{k}{n} \left( (\lambda - \lambda^{-1}) M + \eps^2 \right),
	\end{align*}
	where we used that $u_{\eps,{m_{j+d}}} \in [0, M]$ and $l_{\eps,{m_{j+d}}} \ge u_{\eps,{m_{j+d}}} - \eps^2$.
	
	Second, by Condition~\ref{cond:cover:T}, we have, for $n$ sufficiently large such that $(k/n)M$ and thus $a_j = (k/n) u_{\eps, {m_{j+d}}}$ is sufficiently small (Condition~\ref{cond:kn}),
	\begin{align*}
		\prob \left\{ A_{i,j} \le a_j, \, B_{i,j} > \lambda a_j \right\}
		&=
		\prob \left\{ B_{i,j} > \lambda a_j \mid A_{i,j} \le a_j \right\}
		\prob \left\{ A_{i,j} \le a_j \right\} \\
		&\le
		c_2 \eps^2
		\prob \left\{ \inf_{t \in T} U_1(t) \le \tfrac{k}{n} M \right\}.
	\end{align*}
	In view of Lemma~\ref{lem:tail}, the probability on the right-hand side is bounded by $c_3 (k/n) M$ for some $c_3 > 0$; the condition in Eq.~\eqref{eq:lem:tail} in that lemma is easily fulfilled thanks to Condition~\ref{cond:cover:T}.
	
	Putting things together, we obtain that the upper bound in Eq.~\eqref{eq:AAB} is, for sufficiently large $n$, dominated by
	\begin{align*}
		\tfrac{n}{k} D \left[
			\tfrac{k}{n} \left\{ (\lambda - \lambda^{-1}) M + \eps^2 \right\}
			+
			c_2 \eps^2 \cdot c_3 \tfrac{k}{n} M
		\right].
	\end{align*}
	Now $\lambda = \exp(c_1 \eps^2)$, so that, for $\eps > 0$ sufficiently close to zero, we have $0 \le \lambda - \lambda^{-1} \le 3 c_1 \eps^2$. We find the upper bound in Eq.~\eqref{eq:ceps2} with $c = D (3 c_1 M + 1 + c_2 c_3 M)$.
	\smallskip
	
	\emph{Step~6: Weak convergence and limit process.} ---
	By Steps~2 to~5, the criteria of Theorem~\ref{thm:tight} are met, so that the sequence of processes $W_n$ is asymptotically tight in $\ell^\infty(\mathcal{V})$. Moreover, the finite-dimensional distributions converge in view of Step~1. By \citet[Theorem~1.5.4]{vandervaart+w:1996}, $W_n$ converges weakly in $\ell^\infty(\mathcal{V})$ to a tight version of the process $W$, denoted by $W$ as well. This process $W$ is Gaussian, and by Example~1.5.10 in \citet{vandervaart+w:1996}, its tightness implies that the semimetric space $(\mathcal{V}, \rho_2)$ is totally bounded and that almost all sample paths $\vc{v} \mapsto W(\vc{v})$ are uniformly $\rho_2$-continuous, where $\rho_2$ is the standard deviation semimetric in Eq.~\eqref{eq:rho2}.
\end{proof}

\begin{proof}[Proof of Corollary~\ref{cor:main}]
Let $(s, t) \in T^2$ and $\lambda > 1$. We have
	\begin{align*}
		u^{-1} \prob[ U_1(s) \le u < \lambda u < U_1(t) ]
		&=
		u^{-1} \prob[ U_1(s) \le u ]
		- u^{-1} \prob[ U_1(s) \le u, U_1(t) \le \lambda u] \\
		&\to
		1 - R_{s,t}(1, \lambda), \qquad u \downarrow 0.
	\end{align*}
	Suppose in addition that $\rho(s, t) \le \delta$ for some $\delta > 0$.
  Put $B = \{ r \in T : \rho(r,s) \le \delta \}$. Then
	\begin{align*}
	\lefteqn{
		u^{-1} \prob[ U_1(s) \le u < \lambda u < U_1(t) ]
	} \\
		&\le
		u^{-1} \prob\left[
			\inf_{r \in B} U_1(r) \le u < \lambda u < \sup_{r \in B} U_1(r)
		\right] \\
		&\le
		u^{-1} \prob\left[ \inf_{r \in T} U_1(r) \le u \right]
		\prob\left[
			\sup_{r \in B} U_1(r) > \lambda u
			\, \left\lvert \,
			\inf_{r \in B} U_1(t) \le u
			\right.
		\right]
	\end{align*}
	Recall Condition~\ref{cond:cover:balls}. Let $\eps \in (0, \eps_0]$ and let $\delta(\eps)$ be as in that condition. Put $\lambda = \exp(c_1 \eps^2)$. By Lemma~\ref{lem:tail}, we have $S = \sup_{u > 0} u^{-1} \prob[ \inf_{r \in T} U_1(r) \le u ] < \infty$. As a consequence, we find
	\[
		u^{-1} \prob[ U_1(s) \le u < u \exp(c_1 \eps^2) < U_1(t) ]
		\le
		S c_2 \eps^2
	\]
	for all $u \in (0, u_0]$ and thus
	\[
		1 - R_{s,t}(1, \exp(c_1 \eps^2)) \le S c_2 \eps^2.
	\]
	By the $1$-Lipschitz property of tail copulas, we find
	\begin{align*}
		1 - R_{s,t}(1, 1)
		&=
		[1 - R_{s,t}(1, \exp(c_1 \eps^2))]
		+
		[R_{s,t}(1, \exp(c_1 \eps^2)) - R_{s,t}(1, 1)] \\
		&\le
		S c_2 \eps^2 + \exp(c_1 \eps^2) - 1.
	\end{align*}
	For a given $\eta > 0$, we can find $\eps = \eps(\eta)$ such that the previous upper bound is less than $\eta$. The choice $\delta = \delta(\eps(\eta))$ then fulfills the requirement.
	
	The last statement follows from the property just proved in combination with the increment bound in Lemma~\ref{lem:R:incr}.
\end{proof}

%

\section{Proofs for the results of Section~\ref{sec:emptailcop}}
\label{sec:proofs:emp}

We now prepare for the proof of Theorem~\ref{thm:main:2}. Recall $U_i(t) = 1 - F_t(\xi_i(t))$ for $(i,t) \in \{1,\ldots,n\} \times T$ and let $U_{1:n}(t) \le \ldots \le U_{n:n}(t)$ denote the  order statistics of $U_1(t), \ldots, U_n(t)$; set $U_{0:n}(t)=0$. Put
\begin{align*}
	K_{n,t}(x) &= \tfrac{n}{k} \, U_{\ceil{kx}:n}(t), \\
	\vc{K}_{n,\vc{t}}(\vc{x}) &= (K_{n,t_1}(x_1), \ldots, K_{n,t_d}(x_d)),
\end{align*}
for $(t,x) \in T \times [0, n/k]$ and $(\vc{t},\vc{x}) \in T^d \times [0, n/k]^d$. We will approximate the penultimate empirical tail copula process $\hat{W}_n$ by the process
\begin{align}
	\label{eq:decomp}
	\check{W}_n(\vc{v})
	&=
	\sqrt{k} \left\{
		G_{n,\vc{t}}\left(\vc{K}_{n,\vc{t}}(\vc{x})\right)
		-
		E_{n,\vc{t}}(\vc{x})
	\right\} \\
	\nonumber
	&=
	W_{n}(\vc{t}, \vc{K}_{n,\vc{t}}(\vc{x}))
	+
	\sqrt{k} \left\{
	E_{n,\vc{t}}\left(\vc{K}_{n,\vc{t}}(\vc{x})\right)
	- E_{n,\vc{t}}(\vc{x})
	\right\}	
\end{align}
for $\vc{v} = (\vc{t},\vc{x}) \in \mathcal{V}$.

\begin{lemma}
	\label{lem:Wch}
	Under the conditions of Theorem~\ref{thm:main}, we have
	\[
		\sup_{\vc{v} \in \mathcal{V}}
		\left|
			\hat{W}_{n}(\vc{v})
			-
			\check{W}_n(\vc{v})
		\right|
		\stackrel{p}{\to} 0, \qquad n \to \infty.
	\]
\end{lemma}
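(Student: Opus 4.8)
The plan is to show that the two processes are in fact \emph{pathwise identical} for all large $n$, so that the supremum is exactly zero with probability tending to one and the convergence in probability is immediate. Since $\hat W_n$ and $\check W_n$ carry the same centering $E_{n,\vc t}$, it suffices to establish the deterministic identity
\[
	\hat R_{n,\vc t}(\vc x) = G_{n,\vc t}\bigl(\vc K_{n,\vc t}(\vc x)\bigr),
	\qquad \vc v = (\vc t, \vc x) \in \mathcal V,
\]
outside a null set. The crucial point is that this is an equality of indicator counts that holds coordinate by coordinate, so uniformity over $\mathcal V$ is automatic and no genuine probabilistic approximation is involved; the ``$p$-convergence'' is merely a convenient way to absorb the negligible set of ties and the boundary cases at small $n$.

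First I would record the rank-to-order-statistic dictionary. Under Condition~\ref{cond:cont} each $\xi_i(t)$ has the continuous law $F_t$, so for fixed $t$ the variables $\xi_1(t),\ldots,\xi_n(t)$ — and equivalently the uniforms $U_i(t) = 1 - F_t(\xi_i(t))$ — are almost surely distinct. Writing $m_{n,i}(t) = \sum_{\alpha=1}^n \1\{U_\alpha(t) \le U_i(t)\}$ for the ascending rank of $U_i(t)$, the fact that $U_i(t)$ is a decreasing relabeling of $\xi_i(t)$ gives, almost surely, $\operatorname{rank}_{n,i}(t) = n + 1 - m_{n,i}(t)$.

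Next I would translate the defining inequality of $\hat R_{n,\vc t}$ in \eqref{eq:Rntx}. Using the dictionary and the integrality of $m_{n,i}$,
\[
	\operatorname{rank}_{n,i}(t_j) > n - k x_j + 1
	\iff m_{n,i}(t_j) < k x_j
	\iff m_{n,i}(t_j) \le \ceil{k x_j} - 1,
\]
and, by almost-sure distinctness, this last event is exactly $U_i(t_j) < U_{\ceil{k x_j}:n}(t_j) = \tfrac{k}{n} K_{n,t_j}(x_j)$. Taking the conjunction over $j = 1,\ldots,d$ and dividing by $k$ reproduces $G_{n,\vc t}\bigl(\vc K_{n,\vc t}(\vc x)\bigr)$ as in \eqref{eq:Gntx}, which proves the identity. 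It is worth stressing that the ``$+1$'' in the threshold of \eqref{eq:Rntx} together with the strict inequality in \eqref{eq:Gntx} are precisely what align the two counts with no off-by-one discrepancy.

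Finally I would clear the boundary cases. For $x_j = 0$ both events are empty, since $U_{0:n}(t) = 0$ and $\operatorname{rank}_{n,i}(t_j) > n+1$ is impossible. The only requirement for the identity to make sense is that every index $\ceil{k x_j}$ lie in $\{0,1,\ldots,n\}$, which holds for all $\vc x \in [0,M]^d$ as soon as $\ceil{kM} \le n$; by Condition~\ref{cond:kn} one has $kM/n \to 0$, so this is true for all large $n$, and then $K_{n,t_j}$ is well defined on $[0,M] \subset [0, n/k]$. Hence, for all large $n$, the identity holds simultaneously for every $\vc v \in \mathcal V$ off a (null) event of ties, so that $\sup_{\vc v \in \mathcal V} \abs{\hat W_n(\vc v) - \check W_n(\vc v)} = 0$ almost surely and the asserted convergence follows. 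I expect no analytic obstacle here: the only care needed anywhere is the integer bookkeeping in the rank-to-order-statistic translation and the verification that the order-statistic indices remain in range for large $n$.
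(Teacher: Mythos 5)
There is a genuine gap, and it sits exactly where you wave it away: the claim that the identity $\hat R_{n,\vc t}(\vc x) = G_{n,\vc t}(\vc K_{n,\vc t}(\vc x))$ holds ``off a (null) event of ties'' \emph{simultaneously for every} $\vc v \in \mathcal V$. Ties among $\xi_1(t),\ldots,\xi_n(t)$ are a null event only for each \emph{fixed} $t$; the supremum in the lemma runs over the uncountable set $T$, and the union over $t \in T$ of these null events is not null. Worse, for continuous trajectories ties at random locations are typically unavoidable: if $\xi_i(t_0) < \xi_j(t_0)$ and $\xi_i(t_1) > \xi_j(t_1)$, the intermediate value theorem forces a crossing point $t^*$ with $\xi_i(t^*) = \xi_j(t^*)$, hence $U_i(t^*) = U_j(t^*)$. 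At such a $t^*$ your rank-to-order-statistic dictionary $\operatorname{rank}_{n,i}(t) = n+1-m_{n,i}(t)$ fails (with $n=2$ and a crossing, both ranks equal $2$ while the formula would give $1$), and the chain of equivalences that identifies the two indicator counts breaks down. So the two processes are \emph{not} pathwise identical for large $n$, and the statement really does require a probabilistic approximation argument rather than bookkeeping.

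The paper's proof acknowledges precisely this: instead of an identity it establishes the two-sided bound
\[
	G_{n,\vc t}\bigl(\vc K_{n,\vc t}(\vc x)\bigr)
	\;\le\;
	\hat R_{n,\vc t}(\vc x)
	\;\le\;
	G_{n,\vc t}\bigl(\vc K_{n,\vc t}(\vc x)+\bigr),
\]
where the right-hand side uses non-strict inequalities (a right-hand limit). This yields $\check W_n(\vc v) \le \hat W_n(\vc v) \le \check W_n(\vc v) + \eta_{n,\vc t}(\vc x)$ with $\eta_{n,\vc t}(\vc x) = W_n(\vc t, \vc K_{n,\vc t}(\vc x)+) - W_n(\vc t, \vc K_{n,\vc t}(\vc x)) \ge 0$, and the gap $\eta_n$ is then shown to vanish uniformly in probability using the asymptotic uniform $\rho_2$-equicontinuity of $W_n$ (a consequence of Theorem~\ref{thm:main}) together with the Lipschitz property of $R_{\vc t}$. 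That equicontinuity step is the genuinely probabilistic content of the lemma, and it is what your proposal is missing; your integer bookkeeping is correct on the tie-free event at a fixed $\vc t$, but it cannot deliver the uniform-in-$\vc v$ conclusion on its own.
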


\begin{proof}
	For $t \in T$, let $\xi_{1:n}(t) \le \ldots \le \xi_{n:n}(t)$ denote the ascending order statistics of $\xi_1(t), \ldots, \xi_n(t)$. For $i,m \in \{1,\ldots,n\}$, we have
	\begin{align*}
	\1 \{ U_i(t) < U_{m:n}(t) \}
	&\le \1 \{ \xi_i(t) > \xi_{n-m+1:n}(t) \}
	= \1 \{ \operatorname{rank}_{n,i}(t) > n-m+1 \} \\
	&\le \1 \{ \xi_i(t) \ge \xi_{n-m+1:n}(t) \} \\
	&\le \1 \{ U_i(t) \le U_{m:n}(t) \}.
	\end{align*}
	Let $\vc{v} = (\vc{t},\vc{x}) \in \mathcal{V}$ and consider the right-hand limit
	\[
		G_{n,\vc{t}}(\vc{x}+)
		= \lim_{h \downarrow 0} G_{n,\vc{t}}(\vc{x}+h\vc{1})
		=
		\frac{1}{k} \sum_{i=1}^n \1\{\forall j=1,\ldots,d: U_i(t_j) \le \tfrac{k}{n}x_j \},
	\]
	where $\vc{1}$ denotes a vector of ones.
	It follows that
	\begin{align*}
		G_{n,\vc{t}}\left(\vc{K}_{n,\vc{t}}(\vc{x})\right)
		&=
		\frac{1}{k} \sum_{i=1}^n \1 \left\{
			\forall j = 1, \ldots, d:
			U_i(t_j) < U_{\ceil{k x_j}:n}(t_j)
		\right\} \\
		&\le
		\frac{1}{k} \sum_{i=1}^n \1 \left\{
			\forall j = 1, \ldots, d:
			\operatorname{rank}_{n,i}(t_j) > n - \ceil{kx_j} + 1
		\right\}
		=
		\hat{R}_{n,\vc{t}}(\vc{x}) \\
		&\le
		\frac{1}{k} \sum_{i=1}^n \1 \left\{
			\forall j = 1, \ldots, d:
			U_i(t_j) \le U_{\ceil{k x_j}:n}(t_j)
		\right\}
		= G_{n,\vc{t}}\left(\vc{K}_{n,\vc{t}}(\vc{x})+\right).
	\end{align*}
	To see the identity on the third line involving $\hat{R}_{n,\vc{t}}(\vc{x})$, note that, for integer $q$ and real $y$, we have $q < y \iff q < \ceil{y}$; as a consequence, the ceiling function can be omitted and we get the definition in Eq.~\eqref{eq:Rntx}.
	On the one hand, we have the lower bound
	\begin{align*}
		\hat{W}_n(\vc{v})
		&= \sqrt{k} \{ \hat{R}_{n,\vc{t}}(\vc{x}) - E_{n,\vc{t}}(\vc{x}) \} \\
		&\ge \sqrt{k} \left\{
			G_{n,\vc{t}}\left(\vc{K}_{n,\vc{t}}(\vc{x})\right)
			- E_{n,\vc{t}}(\vc{x})
		\right\}
		= \check{W}_n(\vc{v}).
	\end{align*}
	On the other hand, since the map $\vc{x} \mapsto E_{n,\vc{t}}(\vc{x})$ is continuous, using the same notation as above for right-hand limits, we have the upper bound
	\begin{align*}
		\hat{W}_n(\vc{v})
		&\le \sqrt{k} \left\{
		G_{n,\vc{t}}\left(\vc{K}_{n,\vc{t}}(\vc{x})+\right)
		- E_{n,\vc{t}}(\vc{x})
		\right\} \\
		&=  W_{n}(\vc{t}, \vc{K}_{n,\vc{t}}(\vc{x})+)
		+ \sqrt{k} \left\{
		E_{n,\vc{t}}(\vc{K}_{n,\vc{t}}(\vc{x}))
		- E_{n,\vc{t}}(\vc{x})
		\right\} \\
		&= \check{W}_{n}(\vc{v})
		+ \eta_{n,\vc{t}}(\vc{x}),
	\end{align*}
	where the remainder term is
	\[
		\eta_{n,\vc{t}}(\vc{x})
		= W_{n}(\vc{t}, \vc{K}_{n,\vc{t}}(\vc{x})+)
		- W_{n}(\vc{t}, \vc{K}_{n,\vc{t}}(\vc{x}))
		\ge 0.
	\]
	As noted right after Theorem~\ref{thm:main}, the sequence $W_n$ is asymptotically uniformly $\rho_2$-equicontinuous in probability.
	By the definition~\eqref{eq:rho2} of $\rho_2$ and the uniform continuity of the map $\vc{x} \mapsto R_{\vc{t}}(\vc{x})$, a property that follows from the Lipschitz property of a tail copula, we find that
	$\sup_{\vc{v} \in \mathcal{V}} \left| \eta_{n,\vc{t}}(\vc{x}) \right| 	\stackrel{p}{\to} 0$ as $n \to \infty$.
\end{proof}

We need to know the asymptotic behavior of the empirical tail quantile function $K_{n,t}$ uniformly in $t \in T$. Since $K_{n,t}$ is a generalized inverse of the tail empirical distribution function $G_{n,t}$ in (\ref{eq:Gntx}), we can obtain this  through a Vervaat-type lemma as given in Lemma 4 of \citet{einmahl+g+s:2010}, in combination with  Theorem \ref{thm:main} which gives the asymptotic behavior of  $G_{n,t}(x)$, uniformly in $(t, x) \in T \times [0, M]$.  Note that the functions in there are defined on $[0,1]$ but for our purposes should be defined on $[0,M+1]$ so that $G_{n,t}$ can be extended on $\left(M,M+1\right]$ in such a way that  $G_{n,t}(M+1)=M+1$.  Thus we obtain the following corollary to Theorem~\ref{thm:main}.

\begin{corollary}
	\label{lem:Vervaat:Z}
	Under the conditions of Theorem~\ref{thm:main}, we have
	\[
	\sup_{(t, x) \in T \times [0, M]} \left\lvert
	\sqrt{k} \{ K_{n,t}(x) - x \} + W_{n}(t,x)
	\right\rvert
	\stackrel{p}{\to} 0, \qquad n \to \infty.
	\]
\end{corollary}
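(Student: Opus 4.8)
The plan is to exploit that, in the one-dimensional situation relevant here, the index set is $\mathcal{V}_1 = T \times [0,M]$, on which there is no bias: for $n$ large enough that $\tfrac{k}{n}M \le 1$, the uniform distribution of $U_1(t)$ gives $E_{n,t}(x) = \tfrac{n}{k}\prob\{U_1(t) < \tfrac{k}{n}x\} = x$ exactly, so that $W_n(t,x) = \sqrt{k}\{G_{n,t}(x) - x\}$. Theorem~\ref{thm:main}, restricted to $d=1$, then delivers both $\sup_{\mathcal{V}_1}|W_n| = O_p(1)$ and, decisively, the asymptotic uniform $\rho_2$-equicontinuity in probability of $W_n$. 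First I would record that, for each fixed $t$, the map $x \mapsto K_{n,t}(x)$ is precisely the generalized inverse of the nondecreasing step function $x \mapsto G_{n,t}(x)$ in~\eqref{eq:Gntx}: indeed $\inf\{x : G_{n,t}(x) \ge y\} = \tfrac{n}{k} U_{\ceil{ky}:n}(t) = K_{n,t}(y)$. Extending $G_{n,t}$ to $[0,M+1]$ as described before the statement guarantees that this inverse is well defined on all of $[0,M]$.

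With this identification in hand, I would apply the Vervaat-type lemma (Lemma~4 of \citet{einmahl+g+s:2010}) for each $t$ separately to the forward deviation $x\mapsto W_n(t,x) = \sqrt{k}\{G_{n,t}(x)-x\}$ and its inverse counterpart $x\mapsto \sqrt{k}\{K_{n,t}(x)-x\}$. Writing $x^* = K_{n,t}(y)$ and using $G_{n,t}(K_{n,t}(y)) - y \in [0, 1/k]$ (the jumps of $G_{n,t}$ equal $1/k$ since the $U_i(t)$ are a.s.\ distinct), one obtains the elementary decomposition
\[
	\sqrt{k}\{K_{n,t}(y)-y\} + W_n(t,y)
	=
	\sqrt{k}\{G_{n,t}(K_{n,t}(y)) - y\} + \{W_n(t,y) - W_n(t,K_{n,t}(y))\}.
\]
The first term is bounded by $k^{-1/2}$, and the second is an oscillation of $x\mapsto W_n(t,x)$ over the interval between $y$ and $K_{n,t}(y)$, hence over a set of length at most $\Delta_n := \sup_{t,x}|K_{n,t}(x)-x|$. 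The jump term is uniformly $O(k^{-1/2}) \to 0$, and uniform inverse consistency $\Delta_n \pto 0$ follows from $\sup_{t,x}|G_{n,t}(x)-x| = k^{-1/2}\sup_{\mathcal{V}_1}|W_n| = o_p(1)$ together with monotonicity.

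The remaining, crucial task is to make the oscillation term negligible uniformly in $t$. Here I would use the key identity that along a fixed coordinate $t$ the standard-deviation semimetric collapses to a Euclidean one: by~\eqref{eq:rho2}, together with $R_{t,t}(x,x') = x\wedge x'$ and $R_t(x)=x$, we get $\rho_2((t,x),(t,x'))^2 = x - 2(x\wedge x') + x' = |x-x'|$, so that $\rho_2((t,x),(t,x')) = |x-x'|^{1/2}$ \emph{independently of $t$}. Consequently
\[
	\sup_{t\in T}\ \sup_{|x-x'|\le\Delta_n}\,\abs{W_n(t,x)-W_n(t,x')}
	\le
	\sup_{\rho_2(\vc{v},\vc{w})\le\Delta_n^{1/2}}\abs{W_n(\vc{v})-W_n(\vc{w})},
\]
and the right-hand side tends to $0$ in probability because $W_n$ is asymptotically uniformly $\rho_2$-equicontinuous (Theorem~\ref{thm:main}) and $\Delta_n^{1/2}\pto 0$. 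Taking the supremum over $t$ of the pointwise Vervaat bound and combining the three estimates yields the claim.

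I expect the main obstacle to be precisely the uniformity over the (possibly infinite) index set $T$: the Vervaat lemma is inherently one-dimensional and applies separately at each $t$, so a naive argument would produce a $t$-dependent bound with no control of its supremum. The resolution is the observation above that the $x$-modulus of continuity of $W_n(t,\cdot)$ is dominated, uniformly in $t$, by the single $\rho_2$-modulus of $W_n$; thus the required uniformity is inherited directly from the functional weak convergence in Theorem~\ref{thm:main} rather than argued afresh.
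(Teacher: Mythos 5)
Your proposal is correct and follows essentially the same route as the paper, which likewise obtains this corollary by applying the Vervaat-type Lemma~4 of \citet{einmahl+g+s:2010} to $G_{n,t}$ (extended to $[0,M+1]$) in combination with the uniform convergence and asymptotic $\rho_2$-equicontinuity of $W_n$ from Theorem~\ref{thm:main}; you merely spell out the inversion argument that the paper invokes as a black box, and your observation that $\rho_2((t,x),(t,x'))=\abs{x-x'}^{1/2}$ uniformly in $t$ is exactly what makes that lemma applicable uniformly over $T$. (One trivial slip: with the strict inequality in \eqref{eq:Gntx} one has $G_{n,t}(K_{n,t}(y)) - y \in [-1/k,0)$ rather than $[0,1/k]$, which changes nothing.)
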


%

\begin{proof} [Proof of Theorem~\ref{thm:main:2}]
Suppose that the asymptotic expansion of $\hat{W}_n$ in the convergence statement~\eqref{eq:thm:main:2:expansion} has been established. Define a mapping $\Phi$ from $\ell^\infty(\mathcal{V})$ to $\ell^\infty(\mathcal{V}')$ by
\[
	(\Phi(f))(\vc{v})
	=
	f(\vc{v}) - \sum_{j=1}^d \dot{R}_{\vc{t},j}(\vc{x}) \, f(t_j, x_j),
	\qquad \vc{v} = (\vc{t}, \vc{x}) \in \mathcal{V}'.
\]
The mapping is linear and bounded, since the partial derivatives of a tail copula are all between $0$ and $1$. By the continuous mapping theorem \citep[Theorem~1.3.6]{vandervaart+w:1996}, we have $\Phi(W_{n}) \dto \Phi(W) = \hat{W}$ as $n \to \infty$. Equation~\eqref{eq:thm:main:2:expansion} further says that the distance in $\ell^\infty(\mathcal{V}')$ between $\hat{W}_n$ and $\Phi(W_{n})$ converges to zero in probability as $n \to \infty$. By Slutsky's lemma (\citet[Example~1.4.7]{vandervaart+w:1996} or \citet[Theorem~18.10(iv)]{vandervaart:1998}), then also $\hat{W}_n \dto \hat{W}$ as $n \to \infty$.

The process $\hat{W}$ is Gaussian. It is also tight, since it arises as the image of a tight process by a continuous mapping, and the image of compact set by a continuous function is compact. By \citet[Example~1.5.10]{vandervaart+w:1996}, almost all trajectories of $\hat{W}$ are uniformly $\hat{\rho}_2$-continuous
on $\mathcal{V}'$. %

It remains to show the expansion in \eqref{eq:thm:main:2:expansion}. By Lemma~\ref{lem:Wch} and Slutsky's Lemma \citep[Example~1.4.7]{vandervaart+w:1996}, we can replace the penultimate tail empirical copula process $\hat{W}_n$ by its approximation $\check{W}_n$ defined in Eq.~\eqref{eq:decomp}. We analyze the two terms in the decomposition on the second line of \eqref{eq:decomp} separately.

\paragraph*{\bfseries The first term in \eqref{eq:decomp}}
We will show that
\begin{equation}
\label{eq:decomp:1}
	\sup_{(\vc{t},\vc{x}) \in \mathcal{V}} \left\lvert
		W_{n}(\vc{t}, \vc{K}_{n,\vc{t}}(\vc{x}))
		-
		W_{n}(\vc{t}, \vc{x})
	\right\rvert
	=
	o_p(1), \qquad n \to \infty.
\end{equation}
Recall the standard deviation metric $\rho_2$ in \eqref{eq:rho2}. Since $R_{\vc{t}, \vc{t}}(\vc{x}_1, \vc{x}_2) = R_{\vc{t}}(\vc{x}_1 \wedge \vc{x}_2)$, the minimum being coordinate-wise, we have
\begin{align*}
	\rho_{2}\left( (\vc{t}, \vc{x}_1), (\vc{t}, \vc{x}_2) \right)
	&=
	\left\{
		R_{\vc{t}}(\vc{x}_1)
		- 2 R_{\vc{t}}(\vc{x}_1 \wedge \vc{x}_2)
		+ R_{\vc{t}}(\vc{x}_2)
	\right\}^{1/2}.
\end{align*}
The map $\vc{x} \mapsto R_{\vc{t}}(\vc{x})$ is $1$-Lipschitz w.r.t.\ the $L_1$-norm $\abs{\point}_1$ on Euclidean space, whence
\begin{align}
	\label{eq:rho2:ineq}
	\rho_{2}\left( (\vc{t}, \vc{x}_1), (\vc{t}, \vc{x}_2) \right)
	&\le
	\left(
		\abs{\vc{x}_1 - \vc{x}_1 \wedge \vc{x}_2}_1
		+ \abs{\vc{x}_2 - \vc{x}_1 \wedge \vc{x}_2}_1
	\right)^{1/2} \\
\nonumber
	&\le
	2^{1/2} \abs{\vc{x}_1 - \vc{x}_2}_1^{1/2}.
\end{align}

Corollary~\ref{lem:Vervaat:Z} implies $\sup_{(t, x) \in T \times [0, M]} \left\lvert K_{n,t}(x) - x \right\rvert = o_p(1)$ as $n \to \infty$, and thus also
$
	\sup_{(\vc{t},\vc{x}) \in \mathcal{V}}
	\left\lvert \vc{K}_{n,\vc{t}}(\vc{x}) - \vc{x} \right\rvert_1
	=
	o_p(1)$ as $n \to \infty$.
By \eqref{eq:rho2:ineq}, we get
\[
	\sup_{(\vc{t},\vc{x}) \in \mathcal{V}}
	\rho_2\bigl(
	(\vc{t}, \vc{K}_{n,\vc{t}}(\vc{x})),
	(\vc{t}, \vc{x})
	\bigr)
	=
	o_p(1), \qquad n \to \infty.
\]
In view of Theorem~\ref{thm:main}, the processes $W_n$ are asymptotically uniformly $\rho_2$-equicontinuous in probability, see Example~1.5.10 in  \citet{vandervaart+w:1996}. Eq.~\eqref{eq:decomp:1} follows.

\paragraph*{\bfseries The second term in \eqref{eq:decomp}}
We aim to show that
\begin{equation}
\label{eq:decomp:2}
	\sup_{(\vc{t},\vc{x}) \in \mathcal{V}'}
	\left\lvert
		\sqrt{k} \left\{
			E_{n,\vc{t}}\left(\vc{K}_{n,\vc{t}}(\vc{x})\right)
			- E_{n,\vc{t}}(\vc{x})
		\right\}
		+
		\sum_{j=1}^d \dot{R}_{\vc{t},j}(\vc{x}) W_{n}(t_j, x_j)
	\right\rvert
	=
	o_p(1)
\end{equation}
as $n \to \infty$. To this end, we bound the absolute value in \eqref{eq:decomp:2} by the sum of three terms: 
\begin{align*}
	\Delta_{n,1}(\vc{t},\vc{x})
	&=
	\left\lvert
		\sqrt{k} \left\{
		E_{n,\vc{t}}\left(\vc{K}_{n,\vc{t}}(\vc{x})\right)
		- E_{n,\vc{t}}(\vc{x})
		\right\}
		-
		\sqrt{k} \left\{
			R_{\vc{t}}\left(\vc{K}_{n,\vc{t}}(\vc{x})\right)
			- R_{\vc{t}}(\vc{x})
		\right\}
	\right\rvert, \\
	\Delta_{n,2}(\vc{t},\vc{x})
	&=
	\sqrt{k} \left\lvert
		R_{\vc{t}}\left(\vc{K}_{n,\vc{t}}(\vc{x})\right)
		- R_{\vc{t}}\left(\vc{x} - \tfrac{1}{\sqrt{k}} \vc{W}_n(\vc{t},\vc{x})\right)
	\right\rvert,
\end{align*}
and
\begin{equation}
\label{eq:D3}
	\Delta_{n,3}(\vc{t},\vc{x})
	=
	\Biggl\lvert
		\sqrt{k} \left\{
			R_{\vc{t}}\left(\vc{x} - \tfrac{1}{\sqrt{k}} \vc{W}_n(\vc{t},\vc{x})\right)
			- R_{\vc{t}}(\vc{x})
		\right\} \\
		{} +
		\sum_{j=1}^d \dot{R}_{\vc{t},j}(\vc{x}) W_{n}(t_j, x_j)
	\Biggr\rvert,
\end{equation}
where $\vc{W}_n(\vc{t},\vc{x}) = (W_n(t_j,x_j) : j = 1, \ldots, d)$. For each of the three terms, we will show that the supremum over $(\vc{t},\vc{x})$ converges to zero in probability. Especially for the third term, the proof is challenging, as foreshadowed already in Remark~\ref{rem:challenges}.
\smallskip

\emph{The term $\Delta_{n,1}$.} ---
Recall Theorem~\ref{thm:main} and Corollary~\ref{lem:Vervaat:Z}. For $\eps > 0$, we can find $K > 0$ such that for sufficiently large $n$ and with probability at least $1-\eps$, we have $\sup_{t \in T, x \in [0, M]} \sqrt{k} \abs{K_{n,t}(x)-x} \le K$. On this event, the supremum of $\Delta_{n,1}(\vc{t},\vc{x})$ over all $(\vc{t},\vc{x}) \in \mathcal{V}_d$ is bounded by the supremum in Condition~\ref{cond:bias} and is therefore bounded by $\eps > 0$ for sufficiently large $n$. Since $\eps > 0$ was arbitrary, we find that the supremum of $\Delta_{n,1}(\vc{t},\vc{x})$ over $(\vc{t},\vc{x}) \in \mathcal{V}$ converges to zero in probability.
\smallskip

\emph{The term $\Delta_{n,2}$.} ---
The function $R_{\vc{t}}$ is 1-Lipschitz with respect to the $L_1$-norm, so $\Delta_{n,2}(\vc{t},\vc{x})$ is bounded by $\sum_{j=1}^d \lvert\sqrt{k}\{K_{n,t_j}(x_j) - x_j\} + W_{n}(t_j,x_j)\rvert$. The supremum of $\Delta_{n,2}(\vc{t},\vc{x})$ over all $(\vc{t},\vc{x}) \in \mathcal{V}$ is thus bounded by $D$ times the supremum in Corollary~\ref{lem:Vervaat:Z}, and therefore converges to zero in probability.
\smallskip

\emph{The term $\Delta_{n,3}$.} ---
In what follows, fix $d = 1, 2, \ldots,D .$ We consider the supremum of $\Delta_{n,3}(\vc{t},\vc{x})$ over $(\vc{t},\vc{x}) \in \mathcal{V}_d'$.
Fix $\eps > 0$ and let $\delta = \delta(\eps) > 0$ be chosen later on as a function of $\eps$ in such a way that a number of criteria are met. We bound the supremum as the maximum of the suprema over the $d$ sets $T^d \times \{\vc{x} \in [0, M]^d: x_j \le \delta \}$ for $j = 1, \ldots, d$ on the one hand (Case~I) and over $(T^d)' \times [\delta, M]^d$ (Case~II) on the other hand, where $(T^d)'$ comprises the $d$-tuples with all coordinates different and in $T$. We show that, for some constants $c_1, c_2 > 0$ not depending on $\eps$ or $n$, for all sufficiently large $n$, each of these suprema is less than $c_1 \eps$ with probability at least $1 - c_2 \eps$.
\smallskip

\underline{Case~I.} --
We first consider the supremum of $\Delta_{n,3}(\vc{t},\vc{x})$ over $\vc{t} \in T^d$ and $\vc{x} \in [0, M]^d$ with $x_j \le \delta$, for $j \in \{1, \ldots, d\}$ fixed and for $\delta = \delta(\eps) > 0$ to be chosen in function of a fixed $\eps > 0$.  For all $\eta > 0$ and for all $(t, x) \in T \times [0, \eta]$, we have
\[
	\rho_2((t, x), (t, 0)) = \sqrt{x} \le \sqrt{\eta},
\]
whereas $\rho_2((s,0), (t, 0)) = 0$ for $(s, t) \in T^2$. Furthermore, $W_n(t, 0) = 0$  for all $t \in T$. \modiv{Recall that the processes $W_{n}$ are asymptotically uniformly $\rho_2$-equicontinuous in probability.
We obtain that, for sufficiently small $\delta$, setting $\eta = \sqrt{\delta}$, we have $\rho_2((t, x), (t, 0)) \le \delta^{1/4}$ for all $(t, x) \in T \times [0, \sqrt{\delta}]$, so that} the supremum of $\abs{W_n(t, x)}$ over $(t, x) \in T \times [0, \sqrt{\delta}]$ is bounded by $\eps$ with probability at least $1-\eps$, for large $n$. On this event and for $\vc{t} \in T^d$ and $\vc{x} \in [0, M]^d$ such that $x_j \le \delta$, \modiv{the $1$-Lipschitz property of $R_{\vc{t}}$ implies that} the term $\Delta_{n,3}(\vc{t},\vc{x})$ is bounded by
\begin{equation}
\label{eq:lotje}
	\left\lvert
	\sqrt{k} \left\{
		R_{\vc{t}}\left( \vc{x} - \tfrac{1}{\sqrt{k}} \widetilde{\vc{W}}_{n,j}(\vc{t},\vc{x})\right) - R_{\vc{t}}(\vc{x})
	\right\}
	\right\rvert \\
	+
	\sum_{i \in \{1,\ldots,d\} \setminus \{j\}}
	\dot{R}_{\vc{t},i}(\vc{x}) \abs{W_n(t_i,x_i)}
	+
	2\eps,
\end{equation}
where the random $d$-vector $\widetilde{\vc{W}}_{n,j}(\vc{t},\vc{x})$ has the same $d$ coordinates $W_n(t_i, x_i)$ as $\vc{W}_{n}(\vc{t},\vc{x})$, except for the $j$th one, which is zero.

For the term $\dot{R}_{\vc{t},i}(\vc{x}) \abs{W_n(t_i,x_i)}$, we consider again two cases.
\begin{itemize}
	\item
	If, on the one hand, $x_i \le \sqrt{\delta}$, then $\abs{W_n(t_i,x_i)} \le \eps$ on the event already considered, while $0 \le \dot{R}_{\vc{t},i}(\vc{x}) \le 1$ by the $1$-Lipschitz property of $R_{\vc{t}}$.
	\item
	If, on the other hand, $x_i > \sqrt{\delta}$, then, by inequalities~\eqref{eq:xdotRbound} and~\eqref{eq:Rdiffbound} in Lemma~\ref{lem:cute},
	we have
	\[
		\dot{R}_{\vc{t},i}(\vc{x})
		\le R_{\vc{t}}(\vc{x}) / x_i
		\le x_j/x_i
		\le \delta/\sqrt{\delta}
		= \sqrt{\delta}.
	\]
	Moreover, given $\eps$, we can find $K = K(\eps) > 0$ such that the supremum of $\abs{W_n(t,x)}$ over $(t, x) \in T \times [0, M]$ is bounded by $K$ with probability at least $1-\eps$.
\end{itemize}
Hence, \modiv{with probability at least $1-2\eps$,} we have $0 \le \dot{R}_{\vc{t},i}(\vc{x}) \abs{W_n(t_i,x_i)} \le \max(\eps, K\sqrt{\delta})$ for all $\vc{t} \in T^d$ and all $\vc{x} \in [0, M]^d$ with $x_j \le \delta$.

For the first term in \eqref{eq:lotje}, a similar argument works on the same event (which has probability $1-2\eps$). For those indices $i \in \{1,\ldots,d\} \setminus \{j\}$ for which $x_i \le \sqrt{\delta}$, we can replace $W_{n}(t_i, x_i)$ by $0$ at the cost of an additional error of at most $\eps$. This amounts to an error of at most $(d-1)\eps$. For the remaining indices $i$, we write the increment as a telescoping sum of increments involving changes in one coordinate at a time. For each increment in the sum, we apply \eqref{eq:Rdiffbound}, with upper bound $x_j \le \delta$ and we proceed as in the second bullet point above.  This yield an error of at most $(d-1) K\sqrt{\delta}$.

\modiv{Given $\eps > 0$ and $j \in \{1,\ldots,d\}$, we have thus constructed an event with probability at least $1 - 2\eps$ such that the supremum of $\Delta_{n,\vc{t}}(\vc{x})$ over $\vc{t} \in T^d$ and $\vc{x} \in [0, M]^d$ with $x_j \le \delta$ is bounded by a constant multiple (the constant depending on $d$ only) of $\eps + K \sqrt{\delta}$, where $K$ depends on $\eps$ and where $\delta$ remains to be chosen. By choosing $\delta$ small enough, we can ensure that $K \sqrt{\delta} \le \eps$. We get that the supremum is less than a constant multiple of $\eps$ on an event with probability at least $1-2\eps$.}
\smallskip

\modiv{\underline{Case~II.} --} \label{page:caseII}
Finally we consider the supremum of $\Delta_{n,3}(\vc{t},\vc{x})$ in \eqref{eq:D3} over $(\vc{t}, \vc{x}) \in (T^d)' \times [\delta, M]^d$.
We write the supremum as the maximum over all partitions $\Pi$ of $\{1, \ldots, d\}$ of the suprema \modiv{over those $(\vc{t}, \vc{x})$ that satisfy the following property related to a given partition $\Pi$}: for indices $i,j \in \{1,\ldots,d\}$ within the same component of the partition, we have $\rho_2((t_i,x_i),(t_j,x_j)) \le \delta$ while for indices $i$ and $j$ in different components, we have $\rho_2((t_i,x_i),(t_j,x_j)) > \delta / (d-1)$. In this way, every point $(\vc{t}, \vc{x})$ is taken into account at least once in view of Lemma~\ref{lem:partition}.
The number of partitions being  finite, we can fix a single partition $\Pi = \{D_\alpha : \alpha \in A\}$ of $\{1, \ldots, d\}$ \modiv{and study the associated supremum}.
Consider the component $D_{\alpha(1)}$ of the partition that contains the index $1$, let $d_1 = |D_{\alpha(1)}| \in \{1, \ldots, d\}$ be the number of elements of this component, and assume for convenience of notation that this component is equal to $D_{\alpha(1)} = \{1, \ldots, d_1\}$.

\modiv{Let $(\vc{t},\vc{x})$ be a point satisfying the property associated to the given partition as described above.} The difference $\sqrt{k} \{
		R_{\vc{t}} (\vc{x} - \tfrac{1}{\sqrt{k}} \vc{W}_{n}(\vc{t},\vc{x}))
		-
		R_{\vc{t}} (\vc{x})
	\}$  can be written as a telescoping sum  with as many terms as the number $|A|$ of components $D_\alpha$ in the partition $\Pi$: we replace $\vc{W}_{n}(\vc{t},\vc{x})$ by the zero vector, one component $D_\alpha$ at a time.  We will only consider the first term of the telescoping sum, \modiv{i.e., the one in which the first $d_1$ elements of $\vc{W}_{n}(\vc{t},\vc{x})$ are replaced by zeroes}; the other terms can be treated similarly.
	
Let the random $d$-vector $\overline{\vc{W}}_n(\vc{t},\vc{x})$ have the last $d-d_1$ coordinates equal to those of $\vc{W}_{n}(\vc{t},\vc{x})$, whereas its first $d_1$ coordinates are all equal and are the average of the first $d_1$ coordinates of $\vc{W}_{n}(\vc{t},\vc{x})$. This average is denoted below by $\overline w$.
Similarly,  $\widetilde{\vc{W}}_n(\vc{t},\vc{x})$ has the last $d-d_1$ coordinates equal to those of $\vc{W}_{n}(\vc{t},\vc{x})$, whereas its first $d_1$ coordinates are all equal to zero.
We will consider the first term in the aforementioned telescoping sum:  $ \sqrt{k} \left\{
	R_{\vc{t}} \left( \vc{x} - \tfrac{1}{\sqrt{k}} \vc{W}_{n}(\vc{t},\vc{x}) \right)
	-
	R_{\vc{t}}  \left( \vc{x} - \tfrac{1}{\sqrt{k}} \widetilde{\vc{W}}_{n}(\vc{t},\vc{x}) \right)
	\right\}$.

Applying Lemma~\ref{lem:diagdiff} we obtain
\begin{equation}
\label{eq:bella}
	\left|\sqrt{k} \left\{
	R_{\vc{t}} \left( \vc{x} - \tfrac{1}{\sqrt{k}} \vc{W}_{n}(\vc{t},\vc{x}) \right)
	-
	R_{\vc{t}}  \left( \vc{x} - \tfrac{1}{\sqrt{k}} \overline{\vc{W}}_{n}(\vc{t},\vc{x}) \right)
	\right\} \right|
	\le
	 \sum_{j=1}^{d_1}\left|W_n(t_j,x_j)-\overline w\right|.
\end{equation}

\modiv{For the second term on the right-hand side in~\eqref{eq:bella}, we invoke again the asymptotic uniform $\rho_2$-equicontinuity in probability of the processes $W_n$. Since all points $(t_j,x_j)$ for $j \in \{1,\ldots,d_1\}$ are within $\rho_2$-distance $\delta$, the second term is uniformly bounded by $d_1\varepsilon$ with probability at least $1-\eps$, for $\delta$ small enough and for sufficiently large $n$.}

By the mean-value theorem, 
\begin{equation}
\label{kh}
 	\sqrt{k} \left\{
	R_{\vc{t}} \left( \vc{x} - \tfrac{1}{\sqrt{k}} \overline{\vc{W}}_{n}(\vc{t},\vc{x}) \right)
	-
	R_{\vc{t}}  \left( \vc{x} - \tfrac{1}{\sqrt{k}} \widetilde{\vc{W}}_{n}(\vc{t},\vc{x}) \right)
	\right\}
	= -\overline w\sum_{j=1}^{d_1}\dot{R}_{\vc{t},j}( \vc{\Theta}_n ),
\end{equation}
where $\vc{\Theta}_n = \vc{\Theta}_n(\vc{t},\vc{x})$ is a random point on the line segment connecting $\vc{x} - \tfrac{1}{\sqrt{k}} \widetilde{\vc{W}}_{n}(\vc{t},\vc{x})$ and $ \vc{x} - \tfrac{1}{\sqrt{k}}\overline{\vc{W}}_n(\vc{t},\vc{x})$. Clearly the first $d_1$ coordinates of $\vc{\Theta}_n-\vc{x} $ are all equal to $\overline h = \overline{h}_n(\vc{t},\vc{x})$, say. We would like to show that $\sum_{j=1}^{d_1}\dot{R}_{\vc{t},j}( \vc{\Theta}_n )$ is uniformly close to $\sum_{j=1}^{d_1}\dot{R}_{\vc{t},j}( \vc{x})$, with high probability.

Define the random $d$-vector $\vc{y}_n = \vc{y}_n(\vc{t},\vc{x})$ to have the last $d-d_1$ coordinates equal to those of $\vc{x}$ and the first $d_1$ coordinates   equal to those of $\vc{\Theta}_n$. For every $j \in \{1, \ldots, d_1\} $ we have from $d-d_1$ applications of~\eqref{d2} and Lemma~\ref{lem:dotR:unifcont} that $|\dot{R}_{\vc{t},j}( \vc{\Theta}_n )-\dot{R}_{\vc{t},j}( \vc{y}_n )|\le \varepsilon$ \modiv{uniformly over the considered $(\vc{t},\vc{x})$}, with probability at least $1-\varepsilon$, provided $\delta$ is small enough and $n$ is large.

Now Eq.~(\ref{eq:cuteid}) yields
\begin{align*}
	\sum_{j=1}^{d_1}\dot{R}_{\vc{t},j}( \vc{x} )
	&=\frac{1}{x_1}\sum_{j=1}^{d_1}x_j\dot{R}_{\vc{t},j}( \vc{x} )+\frac{1}{x_1}\sum_{j=1}^{d_1}(x_1-x_j)\dot{R}_{\vc{t},j}( \vc{x} )\\
	&=-\frac{1}{x_1}\sum_{j=d_1+1}^{d}x_j\dot{R}_{\vc{t},j}( \vc{x} )+ \frac{1}{x_1}  R_{\vc{t}}( \vc{x} )+\frac{1}{x_1}\sum_{j=1}^{d_1}(x_1-x_j)\dot{R}_{\vc{t},j}( \vc{x} ),
\end{align*}
and we can rewrite  $\sum_{j=1}^{d_1}\dot{R}_{\vc{t},j}( \vc{y}_n )$ similarly. Hence by \eqref{eq:rho2:sum} 
it follows after some algebra that
\begin{multline*}
	\left|
		\sum_{j=1}^{d_1}\dot{R}_{\vc{t},j}( \vc{y}_n )
		- \sum_{j=1}^{d_1}\dot{R}_{\vc{t},j}( \vc{x} )
	\right|
	\le
	\frac{M}{\delta} \sum_{j=d_1+1}^{d} \left|
		\dot{R}_{\vc{t},j}( \vc{y}_n )
		- \dot{R}_{\vc{t},j}( \vc{x} )
	\right| \\
	{} +
	\frac{d_1|\overline h|}{\delta}
	+
	\frac{1}{\delta(\delta+\overline h)} dM|\overline h|
	+
	2d_1\frac{\delta^2}{\delta\wedge (\delta +\overline h)}.
\end{multline*}
Take $\delta$ such that $3d_1\delta\le\eps$. \modiv{Then since $\overline{h} = O_p(1/\sqrt{k})$ uniformly in $(\vc{t},\vc{x})$, we have, with probability at least $1-\varepsilon$, that the sum of the three terms on the second line is uniformly bounded by $\eps$ when $n$ is large.}

So it remains to consider $|\dot{R}_{\vc{t},j}( \vc{y}_n )-\dot{R}_{\vc{t},j}( \vc{x} )|$ for $j\in \{d_1+1, \ldots, d\}$, that is, the first $d_1$ coordinates of $\vc{y}_n$ have to be replaced consecutively by those of $\vc{x}$ by writing the difference as a telescoping sum. We will consider for convenience only \modiv{the first term in that sum}. Let $\vc{y}_n'$ have the same coordinates as $\vc{y}_n$, except for the first one, which is $x_1$.
By Ineq.~\eqref{d2} we have
\[
	|\dot{R}_{\vc{t},j}( \vc{y}_n )-\dot{R}_{\vc{t},j}( \vc{y}_n' )|
	\le
	|\dot{R}_{(t_1,t_j),2}(x_1+\overline h, x_j )
	-\dot{R}_{(t_1,t_j),2}(x_1,x_j )|
.
\]
By Lemma~\ref{lem:dotR:unifcont}, the latter difference is uniformly small since $\overline h$ tends to zero in probability, uniformly.

\modiv{We have thus shown that in~\eqref{kh}, with large probability and for large $n$, the sum $\sum_{j=1}^{d_1} \dot{R}_{\vc{t},j}(\vc{\Theta}_n)$ may be replaced by $\sum_{j=1}^{d_1} \dot{R}_{\vc{t},j}(\vc{x})$ at the cost of an error bounded by a constant multiple of $\eps$. It remains to replace $\overline w=\frac{1}{d_1} \sum_{l=1}^{d_1} W_n(t_l,x_l)$ in~\eqref{kh} by $W_n(t_j,x_j)$ for $j\in \{1, \ldots, d_1\}$, written inside the sum over $j = 1, \ldots, d_1$. The property that the resulting difference is small follows from the asymptotic $\rho_2$-equicontinuity in probability of $W_n$.}
\modiv{We have thereby found a suitable upper bound on the supremum of $\Delta_{n,3}(\vc{t},\vc{x})$ over points $(\vc{t},\vc{x}) \in (T^d)' \times [\delta,M]^d$ that satisfy the property associated to a fixed partition $\Pi$ of $\{1,\ldots,d\}$ as described in the opening paragraph of Case~II. This completes the analysis of the term $\Delta_{n,3}$ in Eq.~\eqref{eq:D3} and thus of the convergence statement~\eqref{eq:thm:main:2:expansion}. }
\end{proof}

\begin{proof}[Proof of Corollary~\ref{cor:main2}]
	The supremum in Condition~\ref{cond:bias} is bounded by twice the supremum in Condition~\ref{cond:bias2}, if we replace $M$ in the latter condition by $M+1$ and if $k$ is sufficiently large. Hence, Condition~\ref{cond:bias2} implies Condition~\ref{cond:bias}.
	
	The difference between $\hat{W}_n(\vc{v})$ and $\sqrt{k} \{\hat{R}_{n,\vc{t}}(\vc{x}) - R_{\vc{t}}(\vc{x})\}$ is bounded by the maximum over $d = 1, \ldots, D$ of the suprema in Condition~\ref{cond:bias2}. Under that condition, the two processes must thus share the same first-order asymptotic expansion and the same weak convergence properties in $\ell^\infty(\mathcal{V})$ and thus also in $\ell^\infty(\mathcal{V}')$.
\end{proof}


\begin{appendix}
	\section{Auxiliary results}
	\label{app:aux}
	
	%
	
	\begin{lemma}
		\label{lem:R:incr}
		\modiv{Let $R$ be as in Condition~\ref{cond:tailCopula}.}
		For $\vc{s}, \vc{t} \in T^d$ and $\vc{x}, \vc{y} \in [0, \infty)^d$, we have
		\[
		\abs{R_{\vc{s}}(\vc{x}) - R_{\vc{t}}(\vc{y})}
		\le
		\sum_{j=1}^d
		[\abs{x_j-y_j} + (x_j \wedge y_j) \{1 - R_{s_j,t_j}(1, 1)\}].
		\]
	\end{lemma}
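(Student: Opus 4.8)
The plan is to interpolate through the coordinatewise minimum $\vc{m} = \vc{x} \wedge \vc{y}$, thereby separating the change in the value-argument from the change in the index-points. By the triangle inequality,
\[
	\abs{R_{\vc{s}}(\vc{x}) - R_{\vc{t}}(\vc{y})}
	\le
	\abs{R_{\vc{s}}(\vc{x}) - R_{\vc{s}}(\vc{m})}
	+ \abs{R_{\vc{s}}(\vc{m}) - R_{\vc{t}}(\vc{m})}
	+ \abs{R_{\vc{t}}(\vc{m}) - R_{\vc{t}}(\vc{y})}.
\]
First I would dispose of the two outer terms using the $1$-Lipschitz property of a tail copula with respect to the $L_1$-norm. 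Since $\vc{m} \le \vc{x}$ and $\vc{m} \le \vc{y}$ coordinatewise, these terms are bounded by $\sum_j (x_j - m_j)$ and $\sum_j (y_j - m_j)$; adding and using $x_j + y_j - 2(x_j \wedge y_j) = \abs{x_j - y_j}$ yields exactly the first part $\sum_j \abs{x_j - y_j}$ of the claimed bound.

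The crux is the middle term, which measures sensitivity to changing the index-points $\vc{s}$ into $\vc{t}$ at a fixed value-argument $\vc{m}$. Here I would telescope over the coordinates, switching $s_j$ to $t_j$ one at a time, so that
\[
	R_{\vc{s}}(\vc{m}) - R_{\vc{t}}(\vc{m})
	= \sum_{j=1}^d \bigl\{ R_{\vc{r}^{(j-1)}}(\vc{m}) - R_{\vc{r}^{(j)}}(\vc{m}) \bigr\},
\]
where $\vc{r}^{(j)} = (t_1, \ldots, t_j, s_{j+1}, \ldots, s_d)$ interpolates between $\vc{r}^{(0)} = \vc{s}$ and $\vc{r}^{(d)} = \vc{t}$. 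Writing the $j$th summand back in terms of the defining probabilities at scale $w \downarrow 0$, only the event in coordinate $j$ differs, say $A = \{U_1(s_j) \le w m_j\}$ versus $B = \{U_1(t_j) \le w m_j\}$, while the remaining events form a common factor $C$.

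The key estimate is then that $\prob(C \cap A) - \prob(C \cap B) = \prob(C \cap A \cap B^{c}) - \prob(C \cap A^{c} \cap B)$ is a difference of two nonnegative quantities, so its absolute value is at most $\max\{\prob(A \setminus B), \prob(B \setminus A)\}$. By the uniform-$(0,1)$ margins together with the degree-one homogeneity of a tail copula, $w^{-1}\prob(A\setminus B) \to m_j - R_{s_j,t_j}(m_j,m_j) = m_j\{1 - R_{s_j,t_j}(1,1)\}$ as $w \downarrow 0$, and likewise for $\prob(B\setminus A)$. Hence each summand is bounded in the limit by $m_j\{1 - R_{s_j,t_j}(1,1)\} = (x_j\wedge y_j)\{1 - R_{s_j,t_j}(1,1)\}$, and summing over $j$ supplies the second part of the bound.

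The only real subtlety is bookkeeping in this last step. One must check that the intermediate mixed tuples $\vc{r}^{(j)}$ still possess a well-defined $d$-variate tail copula, which is guaranteed by Condition~\ref{cond:tailCopula} since $d \le 2D$. More importantly, one must use the one-sided inequality $\abs{\prob(C\cap A)-\prob(C\cap B)}\le\max\{\prob(A\setminus B),\prob(B\setminus A)\}$ rather than the cruder symmetric-difference bound $\prob(A \triangle B)$; the latter would introduce a spurious factor $2$ and thereby destroy the stated constant.
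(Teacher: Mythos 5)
Your proof is correct, and its skeleton coincides with the paper's: the same triangle-inequality decomposition through $\vc{z}=\vc{x}\wedge\vc{y}$, the same $1$-Lipschitz treatment of the two outer terms, and the same coordinate-by-coordinate telescoping of the middle term. The one place where you genuinely diverge is the bound for a single index swap $s_j \to t_j$ at fixed argument. The paper proves $\abs{R_{\bar{s},\bar{\vc{t}}}(x,\bar{\vc{y}}) - R_{s,\bar{\vc{t}}}(x,\bar{\vc{y}})} \le x\{1-R_{\bar{s},s}(1,1)\}$ entirely at the level of limiting tail copulas: it writes each side as a $(d{+}1)$-variate tail copula with one argument equal to $\infty$, then uses monotonicity (increasing the $\infty$ down to $x$, then marginalizing out $\bar{\vc{y}}$) and homogeneity. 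You instead return to the pre-limit probabilities at scale $w$ and use the identity $\prob(C\cap A)-\prob(C\cap B)=\prob(C\cap A\cap B^{c})-\prob(C\cap A^{c}\cap B)$ together with the max bound, passing to the limit at the end. Both yield the same constant, and you correctly flag that the cruder symmetric-difference bound would cost a factor $2$. The trade-off: the paper's argument needs the existence of the $(d{+}1)$-variate tail copula for the augmented tuple $(\bar{s},s,\bar{\vc{t}})$ but never leaves the limit objects, whereas yours needs only the $d$-variate limits for the mixed tuples $\vc{r}^{(j)}$ and the bivariate limit $R_{s_j,t_j}$, at the price of an extra $\limsup$ step; under Condition~\ref{cond:tailCopula} both sets of limits exist, so either route is legitimate.
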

	
	
	\begin{proof}
		Let $\bar{s}, s \in T$ and $\bar{\vc{t}} \in T^m$ for some $m$. Then
		\begin{align*}
		R_{\bar{s}, \bar{ \vc{t} } } (x, \bar{\vc{y}}) - R_{s,\bar{\vc{t}}}(x, \bar{\vc{y}} )
		&=
		R_{\bar{s},s, \bar{\vc{t}}}(x,\infty, \bar{\vc{y}}) - R_{\bar{s},s,\bar{\vc{t}}}(\infty, x, \bar{\vc{y}} ) \\
		&\le R_{\bar{s},s, \bar{\vc{t}}}(x,\infty, \bar{\vc{y}}) - R_{\bar{s},s,\bar{\vc{t}}}(x, x, \bar{\vc{y}} )\\
		&\le
		R_{\bar{s},s}(x,\infty) - R_{\bar{s},s}(\infty, x )  \\
		&=
		x - R_{\bar{s},s}(x, x) \\
		&=
		x \left( 1 - R_{\bar{s},s}(1, 1) \right).
		\end{align*}
		The same inequality holds with the roles of $\bar{s}$ and $s$ interchanged. We obtain
		\begin{equation}
		\label{eq:R:sst}
		\abs{R_{\bar{s}, \bar{ \vc{t} } }(x, \bar{\vc{y}}) - R_{s, \bar{\vc{t}}}(x, \bar{\vc{y}})}
		\le
		x \left( 1 - R_{\bar{s},s}(1, 1) \right).
		\end{equation}

		Consider the statement of the lemma. Let $z_j = x_j \wedge y_j$ for $j = 1, \ldots, d$. \modiv{Then $\abs{R_{\vc{s}}(\vc{x}) - R_{\vc{t}}(\vc{y})}$ is bounded by}
		\begin{equation}
		\label{eq:R:incr:aux}
		\abs{R_{\vc{s}}(\vc{x}) - R_{\vc{s}}(\vc{z})}
		+
		\abs{R_{\vc{s}}(\vc{z}) - R_{\vc{t}}(\vc{z})}
		+
		\abs{R_{\vc{t}}(\vc{z}) - R_{\vc{t}}(\vc{y})}.
		\end{equation}
		The sum of the first and third terms in \eqref{eq:R:incr:aux} is bounded by
		\[
		\sum_{j=1}^d (\abs{z_j - x_j} + \abs{z_j - y_j})
		=
		\sum_{j=1}^d \abs{x_j - y_j}.
		\]
		The middle term in \eqref{eq:R:incr:aux} can be written as a telescoping sum of $d$ terms, in which the coordinate $s_j$ is replaced by $t_j$ one coordinate at a time. Apply the triangle inequality to bound the absolute value of the sum by the sum of the absolute values. The $j$th term in the resulting sum is bounded by $z_j (1 - R_{s_j,t_j}(1, 1))$ in view of \eqref{eq:R:sst}.
	\end{proof}
	
	Note that \eqref{eq:R:sst} shows that $R_{\vc{t}}, \vc{t} \in T^d$, is continuous in each $t_j$ with respect to the $\rho_2$-semimetric on $T$.
	By Theorem~25.7 in  \citet{rockafellar:1970}, the partial derivatives of $R_{\vc{t}}(\vc{x})$ with respect to $x_j$, for $j \in \{1,\ldots, d\}$, are then continuous in $\vc{t}$ as well.
	
	\begin{lemma}
		\label{lem:partition}
		Let $x_1, \ldots, x_d$ be $d$ points in a semimetric space $(\mathcal{X}, \tilde \rho)$ and let $\delta > 0$. Then there exists a partition $\Pi$ of $\{1,\ldots,d\}$ such that for all indices $i, j$ that belong to the same component of the partition we have $\tilde \rho(x_i,x_j) \le \delta$ while for all indices $i, j$ that belong to different components  of the partition we have $\tilde \rho(x_i, x_j) > \delta / (d-1)$.
	\end{lemma}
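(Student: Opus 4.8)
The plan is to build $\Pi$ by single-linkage clustering at the threshold $\delta/(d-1)$. For $d = 1$ the statement is vacuous, so assume $d \ge 2$. First I would form the graph $G$ on the vertex set $\{1, \ldots, d\}$ in which the indices $i$ and $j$ are joined by an edge precisely when $\tilde\rho(x_i, x_j) \le \delta/(d-1)$, and I would take $\Pi$ to be the partition of $\{1, \ldots, d\}$ into the connected components of $G$.

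With this choice the separation requirement is immediate: if $i$ and $j$ lie in different components of $\Pi$, then in particular there is no edge between them in $G$, which by construction is exactly the statement that $\tilde\rho(x_i, x_j) > \delta/(d-1)$.

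The within-component bound is where the triangle inequality enters; note that the semimetrics relevant here, such as the standard deviation semimetric $\rho_2$, do satisfy it (distinct points may be at distance zero, but the triangle inequality holds). Suppose $i$ and $j$ lie in the same component of $G$. Then there is a path $i = i_0, i_1, \ldots, i_m = j$ along edges of $G$. Choosing a \emph{shortest} such path makes it simple, so it visits each vertex at most once and hence uses at most $d-1$ edges, that is, $m \le d-1$. Since every edge contributes at most $\delta/(d-1)$, the triangle inequality gives
\[
	\tilde\rho(x_i, x_j)
	\le \sum_{l=1}^m \tilde\rho(x_{i_{l-1}}, x_{i_l})
	\le m \cdot \frac{\delta}{d-1}
	\le \delta,
\]
as required.

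The only real point to watch is the bound $m \le d-1$ on the length of a shortest path in a connected component on at most $d$ vertices; this is precisely what makes the factor $d-1$ in the threshold the correct one, balancing the two opposing requirements. Everything else follows routinely from the definition of connected components together with the triangle inequality, so I do not expect any serious obstacle beyond recording this combinatorial observation.
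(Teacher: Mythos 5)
Your proposal is correct and follows essentially the same route as the paper's own proof: build the graph with edges at threshold $\delta/(d-1)$, take connected components as the partition, and bound within-component distances via a chain of at most $d-1$ edges and the triangle inequality. The only addition is your explicit remark that a shortest path has at most $d-1$ edges, which the paper leaves implicit.
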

	
	\begin{proof}
		Construct a graph on $\{1, \ldots, d\}$ by connecting different indices $i$ and $j$ as soon as $\tilde \rho(x_i,x_j) \le \delta / (d-1)$. Let $\Pi$ be the partition formed by the connectivity components of the graph. Indices $i$ and $j$ that belong to different components are not connected by an edge and thus $\tilde \rho(x_i,x_j) > \delta / (d-1)$. Indices $i$ and $j$ that belong to the same component are linked up through a chain of at most $d-1$ edges. By the triangle inequality, $\tilde \rho(x_i,x_j) \le \delta$.
	\end{proof}
	
	\begin{lemma}
		\label{lem:cute}
		If a $d$-variate tail copula $R$ is differentiable on $(0, \infty)^d$, then, for all $\vc{x} \in [0, \infty)^d$, we have
		\begin{equation}
		\label{eq:cuteid}
		\sum_{j=1}^d x_j \dot{R}_j(\vc{x}) = R(\vc{x})
		\end{equation}
		\modiv{(both sides being zero if $x_j = 0$ for some $j = 1, \ldots, d$)}
		and hence for all $j\in \{1, \ldots, d\}$,
		\begin{equation}
		\label{eq:xdotRbound}
		x_j \dot{R}_j(\vc{x}) \le R(\vc{x}).
		\end{equation}
		In addition,
		for all $h \ge -x_1$ such that $h \neq 0$, we have
		\begin{equation}
		\label{eq:Rdiffbound}
		x_1 \frac{R(x_1+h,x_2,\ldots,x_d) - R(\vc{x})}{h}
		\le R(\vc{x})
		\le \min(x_1, \ldots, x_d).
		\end{equation}
	\end{lemma}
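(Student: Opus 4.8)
The plan is to build everything on three structural properties of a tail copula that are immediate from its definition in Condition~\ref{cond:tailCopula}: that $R$ is non-decreasing in each coordinate (the defining event grows with each $x_j$), that it is positively homogeneous of degree one, i.e.\ $R(a\vc{x}) = a R(\vc{x})$ for every $a > 0$ (by rescaling the vanishing parameter in the defining limit), and that it is dominated by the minimum, $R(\vc{x}) \le \min_j x_j$ (since the defining event is contained in $\{U_1(t_i) \le s x_i\}$ for each $i$, whose probability is $s x_i \wedge 1$). The last of these already yields the right-hand inequality $R(\vc{x}) \le \min(x_1,\ldots,x_d)$ in~\eqref{eq:Rdiffbound}, with no recourse to differentiability. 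I would record these three facts first, alongside the concavity of $R$ already noted after Condition~\ref{cond:Rdot}.

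For the identity~\eqref{eq:cuteid} on the open orthant $(0,\infty)^d$, where $R$ is assumed differentiable, I would invoke Euler's relation for positively homogeneous functions: differentiating $R(a\vc{x}) = a R(\vc{x})$ with respect to $a$ at $a = 1$ and applying the chain rule gives $\sum_j x_j \dot{R}_j(\vc{x}) = R(\vc{x})$. Inequality~\eqref{eq:xdotRbound} on $(0,\infty)^d$ is then immediate, since monotonicity forces $\dot{R}_j(\vc{x}) \ge 0$, so each summand $x_j \dot{R}_j(\vc{x})$ in~\eqref{eq:cuteid} is non-negative and hence bounded above by the full sum $R(\vc{x})$.

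For the left-hand inequality in~\eqref{eq:Rdiffbound}, I would avoid derivatives entirely and argue by rescaling. Writing $\vc{x}_h = (x_1+h, x_2, \ldots, x_d)$, the inequality is equivalent, after multiplying through by $h$ and tracking the sign, to the single statement that $x_1 R(\vc{x}_h) - (x_1+h) R(\vc{x})$ has the opposite sign to $h$. When $x_1 + h > 0$ I would set $\lambda = x_1/(x_1+h)$ and use homogeneity to write $x_1 R(\vc{x}_h) = (x_1+h) R(\lambda \vc{x}_h)$; since $\lambda \vc{x}_h$ agrees with $\vc{x}$ in the first coordinate while its remaining coordinates are $\lambda x_j$, monotonicity compares $R(\lambda \vc{x}_h)$ with $R(\vc{x})$ according to whether $\lambda \le 1$ (that is, $h \ge 0$) or $\lambda \ge 1$ (that is, $h < 0$), giving exactly the required sign in each case. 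The degenerate case $h = -x_1$ is handled by the min-bound, which forces $R(\vc{x}_h) = 0$.

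Finally I would extend~\eqref{eq:cuteid} and~\eqref{eq:xdotRbound} to the boundary where some $x_j = 0$, on which the $\dot{R}_j$ are the right-hand derivatives guaranteed by concavity. There $R(\vc{x}) = 0$ by the min-bound, so the task is to show each $x_i \dot{R}_i(\vc{x})$ vanishes. If $x_i = 0$ this is trivial; if $x_i > 0$, I would feed the already-established inequality~\eqref{eq:Rdiffbound} applied in the $i$th coordinate into the difference quotient, obtaining $R(\vc{x} + h e_i) \le R(\vc{x}) = 0$ for $h > 0$ and hence $\dot{R}_i(\vc{x}) = 0$. Both~\eqref{eq:cuteid} and~\eqref{eq:xdotRbound} then hold with every term equal to zero. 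I expect the main obstacle to be exactly this boundary bookkeeping, together with keeping the inequality directions straight across the two signs of $h$ in~\eqref{eq:Rdiffbound}; once homogeneity is in place the interior identity and its corollary are routine.
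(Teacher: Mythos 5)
Your proof is correct. For the Euler identity \eqref{eq:cuteid} and its consequence \eqref{eq:xdotRbound} you follow exactly the paper's route: differentiate $\lambda \mapsto R(\lambda\vc{x}) = \lambda R(\vc{x})$ at $\lambda = 1$, then use non-negativity of each summand (monotonicity of $R$) to drop all but one term; your boundary bookkeeping (both sides vanish when some $x_j = 0$, since the min-bound forces $R \equiv 0$ on a neighbourhood in the remaining coordinates, so every right-hand partial $\dot{R}_i$ with $x_i>0$ is zero) is a slightly more explicit version of the paper's one-line remark. Where you genuinely diverge is the first inequality in \eqref{eq:Rdiffbound}. The paper observes that the map $h \mapsto \{R(x_1+h,x_2,\ldots,x_d)-R(\vc{x})\}/h$ is non-increasing by concavity of $R$ and that the inequality holds with equality at $h=-x_1$, which settles all $h \ge -x_1$ at once. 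You instead multiply through by $h$, set $\lambda = x_1/(x_1+h)$, and use homogeneity to write $x_1 R(\vc{x}_h) = (x_1+h)R(x_1,\lambda x_2,\ldots,\lambda x_d)$, after which monotonicity gives the correct sign in each of the two cases $h>0$ and $-x_1<h<0$. Both arguments are sound; yours is marginally more elementary in that it never invokes concavity (only homogeneity and monotonicity, both immediate from Condition~\ref{cond:tailCopula}), at the cost of a case split on the sign of $h$, whereas the paper's concavity argument is shorter once concavity of tail copulas is taken as known, as it is elsewhere in the paper.
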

	
	\begin{proof}
		Evaluating the derivative of the function $\lambda \mapsto R(\lambda \vc{x}) = \lambda R(\vc{x})$ in $\lambda = 1$ yields the identity in (\ref{eq:cuteid}) on $(0, \infty)^d$.
		The identity extends to $\vc{x} \in [0, \infty)^d$ since for vectors $\vc{x}$ of which one or more coordinates are zero, both sides of the above equation are zero: if $x_j = 0$, then the definition of $\dot{R}_j(\vc{x})$ is immaterial, while for $i \ne j$, we have $\dot{R}_j(\vc{x}) = 0$.
		
		\modiv{Inequality \eqref{eq:xdotRbound} is a consequence of~\eqref{eq:cuteid} and the fact that $R$ is non-decreasing in each variable.}
		
		%
		%
		%
		%
		For the first inequality in \eqref{eq:Rdiffbound}: for $h = -x_1$, the inequality becomes an equality, and by concavity of $R$, the left-hand side is non-increasing in $h$. The second inequality in \eqref{eq:Rdiffbound} is immediate from the definition (and well-known).
	\end{proof}
	
	\modiv{Let $\rho_2$ be the standard deviation semimetric associated to a spatial tail copula $R$ as introduced in Eq.~\eqref{eq:rho2} in the main paper.}
	
	\begin{lemma}
		For $s,t \in T$ and $x,y \in [0, \infty)$, we have
		\begin{align}
		\label{eq:rho2:sum}
		[\rho_2((s,x),(t,y))]^2
		&= \abs{x - y} + 2 \left( (x \wedge y) - R_{s,t}(x, y) \right) \\
		\label{eq:rho2:sum:ineq}
		&\le \abs{x-y} + 2 (x \wedge y)(1 - R_{s,t}(1,1)).
		\end{align}
	\end{lemma}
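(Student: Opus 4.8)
The plan is to read off both claims almost directly from the definition of $\rho_2$ in Eq.~\eqref{eq:rho2}, using only two elementary structural facts about tail copulas: the normalization $R_s(x) = x$ in dimension $m=1$ (noted right after Condition~\ref{cond:tailCopula}), together with the monotonicity and degree-one homogeneity of $R_{s,t}$.

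First I would specialize Eq.~\eqref{eq:rho2} to the single-coordinate points $\vc{v}_1 = (s,x)$ and $\vc{v}_2 = (t,y)$. This gives
\[
	[\rho_2((s,x),(t,y))]^2 = R_s(x) - 2 R_{s,t}(x,y) + R_t(y) = x - 2 R_{s,t}(x,y) + y,
\]
where the second equality uses $R_s(x) = x$ and $R_t(y) = y$. To match the right-hand side of \eqref{eq:rho2:sum}, I would invoke the purely algebraic identity $\abs{x-y} + 2(x \wedge y) = x + y$, valid for all $x, y \ge 0$ (check the two cases $x \le y$ and $x > y$ separately). Substituting $x + y = \abs{x-y} + 2(x\wedge y)$ into the display immediately yields $[\rho_2((s,x),(t,y))]^2 = \abs{x-y} + 2\{(x \wedge y) - R_{s,t}(x,y)\}$, which is \eqref{eq:rho2:sum}.

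For the inequality \eqref{eq:rho2:sum:ineq}, after cancelling the common term $\abs{x-y}$ it suffices to prove the single bound
\[
	R_{s,t}(x,y) \ge (x \wedge y)\, R_{s,t}(1,1).
\]
Writing $z = x \wedge y$, I would argue as follows: since $R_{s,t}$ is non-decreasing in each argument (being a pointwise limit of probabilities that are non-decreasing in $x$ and $y$) and $x \ge z$, $y \ge z$, we have $R_{s,t}(x,y) \ge R_{s,t}(z,z)$; then degree-one homogeneity of the tail copula gives $R_{s,t}(z,z) = z\, R_{s,t}(1,1)$. The boundary case $z = 0$ is trivial, since a zero coordinate forces $R_{s,t}(x,y) = 0$ and both sides vanish. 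Rearranging, $(x\wedge y) - R_{s,t}(x,y) \le (x \wedge y)\{1 - R_{s,t}(1,1)\}$, which upon multiplying by $2$ and adding back $\abs{x-y}$ is exactly \eqref{eq:rho2:sum:ineq}.

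There is no real obstacle here; the only point requiring a moment's care is the boundary behaviour when $x \wedge y = 0$, and making explicit that monotonicity and homogeneity of $R_{s,t}$—both standard and used elsewhere in the paper—are all that the inequality step needs.
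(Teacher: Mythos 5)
Your proof is correct and follows essentially the same route as the paper: both derive \eqref{eq:rho2:sum} from $\var\{W(s,x)\}=x$, $\var\{W(t,y)\}=y$ and the identity $x-2(x\wedge y)+y=\abs{x-y}$, and both obtain \eqref{eq:rho2:sum:ineq} from $R_{s,t}(x,y)\ge R_{s,t}(x\wedge y,x\wedge y)=(x\wedge y)R_{s,t}(1,1)$ via monotonicity and homogeneity. Your explicit treatment of the boundary case $x\wedge y=0$ is a minor addition the paper leaves implicit.
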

	
	\begin{proof}
		Since $x - 2 (x \wedge y) + y 
		= \abs{x-y}$, we have
		\begin{align*}
		\rho_2((s, x), (t, y))^2
		&=
		\expec[ \{W(s,x) - W(t,y)\}^2 ] \\
		&=
		x - 2 R_{s,t}(x, y) + y \\
		&=
		x - 2 (x \wedge y) + y + 2 \left( (x \wedge y) - R_{s,t}(x, y) \right) \\
		&=
		\abs{x - y} + 2 \left( (x \wedge y) - R_{s,t}(x, y) \right),
		\end{align*}
		which is \eqref{eq:rho2:sum}. The inequality in \eqref{eq:rho2:sum:ineq} then follows from $R_{s,t}(x,y) \ge R_{s,t}(x \wedge y, x \wedge y) = (x \wedge y) R_{s,t}(1,1)$.
	\end{proof}
	
	\modiv{Recall the semimetric $\rho_{2,T}$ on $T$ induced by $\rho_2$ as defined in Eq.~\eqref{eq:rho2T} in the main paper.} Then \eqref{eq:rho2:sum:ineq} states that
	\[
	[\rho_2((s, x), (t, y))]^2
	\le
	\abs{x - y} + (x \wedge y) [\rho_{2,T}(s,t)]^2,
	\]
	splitting the semimetric on $T \times [0, \infty)$ into a component on $T$ and a component on $[0, \infty)$.
	
	\begin{lemma}
		\label{lem:rho2zero}
		For $(s_n,x_n), (s,x) \in T \times [0, \infty)$, we have, as $n \to \infty$,
		\[
		\rho_2((s_n,x_n),(s,x)) \to 0 \\
		\iff
		[x_n \to x \text{ and } x (R_{s_n,s}(1, 1) - 1) \to 0].
		\]
	\end{lemma}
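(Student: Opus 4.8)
The plan is to build everything on the exact identity~\eqref{eq:rho2:sum}, which expresses the squared semimetric as
\[
	[\rho_2((s_n,x_n),(s,x))]^2
	=
	\abs{x_n - x} + 2\bigl( (x_n \wedge x) - R_{s_n,s}(x_n, x) \bigr),
\]
a sum of two nonnegative terms, the second being nonnegative because $R_{s_n,s}(x_n,x) \le x_n \wedge x$. Consequently $\rho_2((s_n,x_n),(s,x)) \to 0$ if and only if both summands tend to $0$. The first summand vanishes exactly when $x_n \to x$, so the whole task reduces to showing that, under $x_n \to x$, the second summand $(x_n \wedge x) - R_{s_n,s}(x_n,x)$ tends to $0$ if and only if $x\,(1 - R_{s_n,s}(1,1)) \to 0$.

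To establish this remaining equivalence I would first replace the moving argument $(x_n,x)$ by the fixed-shape argument $(x,x)$. The $1$-Lipschitz property of tail copulas with respect to the $L_1$-norm gives $\abs{R_{s_n,s}(x_n,x) - R_{s_n,s}(x,x)} \le \abs{x_n - x}$, and trivially $\abs{(x_n \wedge x) - x} \le \abs{x_n - x}$; hence the difference between $(x_n \wedge x) - R_{s_n,s}(x_n,x)$ and $x - R_{s_n,s}(x,x)$ is at most $2\abs{x_n - x} \to 0$. Therefore the second summand tends to $0$ if and only if $x - R_{s_n,s}(x,x) \to 0$. By homogeneity of order one of the tail copula, $R_{s_n,s}(x,x) = x\,R_{s_n,s}(1,1)$, so $x - R_{s_n,s}(x,x) = x\,(1 - R_{s_n,s}(1,1))$, which closes the equivalence and hence proves the lemma.

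The degenerate case $x = 0$ should be noted separately, though it is immediate: then $x\,(1 - R_{s_n,s}(1,1)) = 0$ for every $n$, while the condition $x_n \to 0$ forces $x_n \wedge x = 0$ and $\abs{x_n - x} \to 0$, so both formulations hold automatically. The argument is otherwise uniform in whether $x > 0$.

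I do not expect a serious obstacle, since the proof is essentially an exact computation once \eqref{eq:rho2:sum} is in hand. The only point requiring care is making the passage from $R_{s_n,s}(x_n,x)$ to $R_{s_n,s}(1,1)$ rigorous: this needs both tools simultaneously — Lipschitz continuity to freeze the first coordinate at $x$, and homogeneity to rescale $(x,x)$ down to $(1,1)$ — rather than either one in isolation.
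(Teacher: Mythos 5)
Your proof is correct and follows essentially the same route as the paper: both start from the identity~\eqref{eq:rho2:sum}, use the nonnegativity of its two summands, and reduce the second summand to $x\{1-R_{s_n,s}(1,1)\}$ via homogeneity of the tail copula. The only (harmless) difference is that you pass from $R_{s_n,s}(x_n,x)$ to the diagonal point $(x,x)$ by the $1$-Lipschitz property, which yields both implications in one sweep, whereas the paper argues the two directions separately, using the inequality~\eqref{eq:rho2:sum:ineq} for one and monotonicity towards $(x_n\vee x,\,x_n\vee x)$ for the other.
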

	
	\begin{proof}
		By \eqref{eq:rho2:sum}, we have
		\[
		\rho_2((s_n,x_n),(s,x))^2
		=
		\abs{x_n-x} + 2 \left( (x_n \wedge x) - R_{s_n,s}(x_n, x) \right).
		\]
		If $x = 0$, then $\rho_2((s_n,x_n),(s,x))^2 = x_n$, which converges to zero if and only if $x_n \to 0 = x$, as required. Henceforth, suppose $x > 0$.
		
		On the one hand, if $x_n \to x$ and $R_{s_n,s}(1, 1) \to 1$, then by \eqref{eq:rho2:sum:ineq} we also have $\rho_2((s_n,x_n),(s,x)) \to 0$.
		
		On the other hand, if $\rho_2((s_n,x_n),(s,x)) \to 0$, then by \eqref{eq:rho2:sum}, both $x_n \to x$ and $(x_n \wedge x) - R_{s_n,s}(x_n, x) \to 0$. Since $x_n \vee x - x_n \wedge x \to 0$, also $x_n \vee x - R_{s_n,s}(x_n, x) \to 0$. But $R_{s_n,s}(x_n, x) \le R_{s_n,s}(x_n \vee x, x_n \vee x) = (x_n \vee x) R_{s_n,s}(1, 1)$, and thus also $(x_n \vee x)(1 - R_{s_n,s}(1, 1)) \to 0$.
	\end{proof}

	\begin{lemma}
		\label{lem:diagdiff}
		Let $R$ be a $d$-variate tail copula, where $d = d_1 + d_2$. For $(\vc{x},\vc{y}) \in [0,\infty)^{d_1} \times [0, \infty)^{d_2}$ and for $\vc{h} \in \reals^{d_1}$ such that $x_j + h_j \ge 0$ for all $j = 1, \ldots, d_1$, we have
		\[
		\abs{R(\vc{x}+\vc{h},\vc{y}) - R(\vc{x}+\bar{h}\vc{1},\vc{y})}
		\le
		\sum_{j=1}^{d_1} \abs{h_j - \bar{h}}
		\]
		with $\bar{h} = (h_1+\cdots+h_{d_1})/d_1$ and with $\vc{1} = (1, \ldots, 1)$ of dimension $d_1$.
	\end{lemma}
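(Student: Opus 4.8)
The plan is to deduce the bound from the single fact that a $d$-variate tail copula is coordinate-wise non-decreasing and $1$-Lipschitz with respect to the $L_1$-norm $\abs{\point}_1$, a property already exploited in the proofs of Theorem~\ref{thm:main:2} and Corollary~\ref{cor:main}. Granting this, the two points $\vc{u} = (\vc{x}+\vc{h},\vc{y})$ and $\vc{v} = (\vc{x}+\bar{h}\vc{1},\vc{y})$ share the same last $d_2$ coordinates, so that $\abs{\vc{u}-\vc{v}}_1 = \sum_{j=1}^{d_1}\abs{(x_j+h_j)-(x_j+\bar{h})} = \sum_{j=1}^{d_1}\abs{h_j-\bar{h}}$, and the assertion is exactly the instance $\abs{R(\vc{u})-R(\vc{v})} \le \abs{\vc{u}-\vc{v}}_1$ of the $L_1$-Lipschitz inequality.

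First I would record why $R$ is $1$-Lipschitz in a single coordinate on $[0,\infty)^d$. Writing $R(\vc{x}) = \lim_{s\downarrow 0}s^{-1}\prob\{V_1\le sx_1,\ldots,V_d\le sx_d\}$ for a random vector $\vc{V}$ with uniform-$(0,1)$ margins, fixing all but the $j$th coordinate and increasing $x_j$ by $\tau>0$ enlarges the event in the defining limit only through the $j$th margin, so $0 \le R(\vc{x}+\tau\vc{e}_j)-R(\vc{x}) \le \lim_{s\downarrow 0}s^{-1}\prob\{sx_j < V_j \le s(x_j+\tau)\} = \tau$. Telescoping over the coordinates (changing one coordinate at a time and applying the triangle inequality) upgrades this to the bound $\abs{R(\vc{a})-R(\vc{b})}\le\abs{\vc{a}-\vc{b}}_1$ for all $\vc{a},\vc{b}\in[0,\infty)^d$.

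The one genuine subtlety — and the only real obstacle — is that the hypothesis merely guarantees $\vc{x}+\vc{h}\in[0,\infty)^{d_1}$, whereas the diagonal point $\vc{x}+\bar{h}\vc{1}$ may have negative coordinates (e.g.\ $d_1=2$, $\vc{x}=(0,2)$, $\vc{h}=(0,-2)$ give $\bar{h}=-1$, so $x_1+\bar{h}=-1$). I would therefore work with the extension of $R$ by $0$ to vectors having a negative coordinate, exactly the convention already adopted for Condition~\ref{cond:bias}, and verify that this extension stays $1$-Lipschitz in each coordinate. Fixing the other coordinates and varying the $j$th one $s\in\reals$: if every other coordinate is non-negative the extended section equals $R(\ldots,\max(s,0),\ldots)$, a composition of the $1$-Lipschitz map $s\mapsto\max(s,0)$ with the $1$-Lipschitz coordinate section of $R$, using that $R$ vanishes as soon as a coordinate is $0$; whereas if some other coordinate is negative the section is identically $0$. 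In either case it is $1$-Lipschitz in $s$.

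With this extension in hand, the telescoping argument of the second paragraph applies verbatim to $\vc{u}$ and $\vc{v}$ over all $d$ coordinates — the intermediate points may leave the positive orthant, but that is now harmless — yielding $\abs{R(\vc{u})-R(\vc{v})}\le\abs{\vc{u}-\vc{v}}_1=\sum_{j=1}^{d_1}\abs{h_j-\bar{h}}$, which is the claim. I expect no computational difficulty; the only point demanding care is the boundary behaviour of the zero-extension just described, and once that is settled the inequality is immediate.
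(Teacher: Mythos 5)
Your proposal is correct and takes essentially the same route as the paper, whose entire proof is the one-line observation that the claim is an instance of the $1$-Lipschitz property of a tail copula with respect to the $L_1$-norm. The extra care you take in checking that the zero-extension of $R$ remains $1$-Lipschitz when $\vc{x}+\bar{h}\vc{1}$ leaves the positive orthant is a genuine (if minor) point that the paper leaves implicit, and your treatment of it is sound.
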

	
	\begin{proof}
		This follows immediately from the $1$-Lipschitz property of a tail copula with respect to the $L_1$-norm.
	\end{proof}

	\begin{lemma}
		Let $R$ be a $d$-variate ($d\ge 2)$ tail copula. Assume its partial derivatives $\dot{R}_j$ exist. Write $R(x,y)$ for $R(x,y, \vc{\infty})$. For $g>0$ and $(x, y, \vc{z}) \in [0, \infty) \times [0, \infty) \times [0, \infty)^{d-2}$, we have
		\begin{equation}\label{d2}
		0
		\le \dot{R}_1(x,y+g,\vc{z}) - \dot{R}_1(x,y,\vc{z})
		\le \dot{R}_1(x,y+g) - \dot{R}_1(x,y).
		\end{equation}
	\end{lemma}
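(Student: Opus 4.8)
My plan is to reduce both inequalities in~\eqref{d2} to two elementary monotonicity properties of the mixed second-order difference of $R$ in its first two coordinates, which become transparent once $R$ is written through its defining limit. Represent $R = R_{\vc{t}}$ for the relevant tuple $\vc{t} = (t_1, t_2, t_3, \ldots, t_d)$ and use the right-hand difference quotient in the first coordinate,
\[
	\dot{R}_1(\vc{a}) = \lim_{\eps \downarrow 0} \eps^{-1}\{ R(\vc{a} + \eps \vc{e}_1) - R(\vc{a}) \},
\]
where $\vc{e}_1$ is the first unit vector; this one-sided limit equals $\dot{R}_1$ wherever the derivative exists and is the right-hand derivative at $x = 0$ by convention. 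Since the two limits defining $\dot{R}_1(x, y+g, \vc{z})$ and $\dot{R}_1(x, y, \vc{z})$ both exist, their difference is the limit of the difference, so $\dot{R}_1(x,y+g,\vc{z}) - \dot{R}_1(x,y,\vc{z}) = \lim_{\eps \downarrow 0} \eps^{-1} \Delta_{\vc{z}}(\eps)$, where
\[
	\Delta_{\vc{z}}(\eps)
	= R(x+\eps, y+g, \vc{z}) - R(x, y+g, \vc{z}) - R(x+\eps, y, \vc{z}) + R(x, y, \vc{z})
\]
is the mixed difference of $R$ over the rectangle $(x, x+\eps] \times (y, y+g]$ in the first two coordinates, the remaining ones fixed at $\vc{z}$. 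Replacing $\vc{z}$ by $\vc{\infty}$ gives the analogous identity $\dot{R}_1(x,y+g) - \dot{R}_1(x,y) = \lim_{\eps \downarrow 0} \eps^{-1} \Delta_{\vc{\infty}}(\eps)$ for the bivariate marginal $R(\point,\point) = R(\point,\point,\vc{\infty})$. Both inequalities in~\eqref{d2} then follow at once from the single chain $0 \le \Delta_{\vc{z}}(\eps) \le \Delta_{\vc{\infty}}(\eps)$.

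The key step is that, by Condition~\ref{cond:tailCopula}, the mixed difference $\Delta_{\vc{z}}(\eps)$ is a limit of rescaled rectangle probabilities:
\[
	\Delta_{\vc{z}}(\eps)
	= \lim_{s \downarrow 0} s^{-1} \prob\bigl\{ sx < U_1(t_1) \le s(x+\eps),\ sy < U_1(t_2) \le s(y+g),\ U_1(t_{l+2}) \le s z_l \text{ for all } l \bigr\}.
\]
As a limit of probabilities it is nonnegative, which gives the left inequality. For the right inequality, observe that $\Delta_{\vc{\infty}}(\eps)$ is the limit of the same rescaled probabilities but with the constraints $U_1(t_{l+2}) \le s z_l$ dropped; hence the event defining $\Delta_{\vc{z}}(\eps)$ is contained in the one defining $\Delta_{\vc{\infty}}(\eps)$, the probabilities are ordered for every $s$, and passing to the limit yields $\Delta_{\vc{z}}(\eps) \le \Delta_{\vc{\infty}}(\eps)$. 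Dividing by $\eps > 0$ and letting $\eps \downarrow 0$ transports both relations to the derivatives.

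The only delicate point will be justifying the passage to the derivative on the marginal side, since the lemma assumes existence of the partial derivatives of the $d$-variate $R$ but a priori says nothing about the derivative of $R(\point,\point) = R(\point,\point,\vc{\infty})$. This is handled by concavity: a tail copula is concave on $[0,\infty)^d$ (the property already exploited in Lemma~\ref{lem:cute}), so its one-sided partial derivatives exist throughout the positive orthant and the difference quotients $\eps^{-1}\{R(\vc{a}+\eps\vc{e}_1) - R(\vc{a})\}$ decrease monotonically to them as $\eps \downarrow 0$. Consequently $\lim_{\eps \downarrow 0} \eps^{-1}\Delta_{\vc{\infty}}(\eps)$ exists and equals $\dot{R}_1(x,y+g) - \dot{R}_1(x,y)$, and where the two-sided $\dot{R}_1$ is assumed to exist it agrees with the right-hand derivative used above. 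This closes the argument; I expect the probabilistic identity for $\Delta_{\vc{z}}(\eps)$ and the inclusion of events to be the substantive content, with the concavity remark taking care of the remaining regularity.
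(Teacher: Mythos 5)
Your proposal is correct and follows essentially the same route as the paper: the paper invokes the fact that $R$ is the distribution function of a nonnegative Borel measure on $[0,\infty)^d$, so the mixed second-order difference in the first two coordinates is nonnegative and increases when the remaining coordinates are sent to $\infty$, and then lets the first-coordinate increment tend to zero — exactly your chain $0 \le \Delta_{\vc{z}}(\eps) \le \Delta_{\vc{\infty}}(\eps)$, which you justify equivalently via the defining rescaled rectangle probabilities. Your extra care about the right-hand derivative of the bivariate margin via concavity is a point the paper only notes parenthetically.
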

	
	\begin{proof} Since $R$ is the distribution function of a nonnegative Borel measure on $[0, \infty)^d$, we have
		\begin{align*}
		0
		&\le
		\frac{R(x+h,y+g,\vc{z}) - R(x,y+g,\vc{z})}{h}
		-
		\frac{R(x+h,y,\vc{z}) - R(x,y,\vc{z})}{h}
		\\
		&\le
		\frac{R(x+h,y+g) - R(x,y+g)}{h}
		-
		\frac{R(x+h,y) - R(x,y)}{h}.
		\end{align*}
		Taking the limit as $h \downarrow 0$, we find (\ref{d2}) (for the right-hand partial derivatives).
	\end{proof}
	
	\begin{lemma}
		\label{lem:dotR:cont}
		Assume Condition~\ref{cond:Rdot}. Let $s_n,t_n,s,t \in T$ and $x_n,y_n,x,y \in [0, \infty)$. If $x > 0$ and $\rho_2((s,x),(t,y)) > 0$ and if $\rho_2((s_n,x_n),(s,x)) \to 0$ and $\rho_2((t_n,y_n),(t,y)) \to 0$, then $\dot{R}_{s_n,t_n;1}(x_n,y_n) \to \dot{R}_{s,t;1}(x,y)$ as $n \to \infty$.
	\end{lemma}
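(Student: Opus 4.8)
The plan is to squeeze $\dot{R}_{s_n,t_n;1}(x_n,y_n)$ between one-sided difference quotients of $R_{s_n,t_n}$, pass those quotients to the limit using the pointwise convergence $R_{s_n,t_n} \to R_{s,t}$ guaranteed by Lemma~\ref{lem:R:incr}, and then shrink the increment. First I would collect the elementary consequences of the hypotheses. Applying Lemma~\ref{lem:rho2zero} to $\rho_2((s_n,x_n),(s,x)) \to 0$ with $x > 0$ gives $x_n \to x$ and $1 - R_{s_n,s}(1,1) \to 0$; the second hypothesis gives $y_n \to y$ and, when $y > 0$, also $1 - R_{t_n,t}(1,1) \to 0$. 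By the triangle inequality, $\rho_2((s_n,x_n),(t_n,y_n)) \to \rho_2((s,x),(t,y)) > 0$, so $\rho_2((s_n,x_n),(t_n,y_n)) > 0$ for all large $n$; since $x_n > 0$ eventually, the explicit form~\eqref{eq:rho2:sum} shows this is equivalent to ``$s_n \ne t_n$ or $x_n \ne y_n$''. This dichotomy is exactly what is needed to know that $\dot{R}_{s_n,t_n;1}(x_n,y_n)$ is well defined for large $n$: if $s_n \ne t_n$, then property (R3) of Condition~\ref{cond:Rdot} applies since $x_n > 0$; if $s_n = t_n$, then $R_{s_n,t_n}(\cdot,\cdot)$ is the comonotone copula $\min(\cdot,\cdot)$, which is differentiable in its first argument at $(x_n,y_n)$ precisely because $x_n \ne y_n$. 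The very same reasoning at the limit shows that $\dot{R}_{s,t;1}(x,y)$ exists: either $s \ne t$ and (R3) applies, or $s = t$, in which case $\rho_2((s,x),(t,y)) > 0$ forces $x \ne y$ and $\min(\cdot,\cdot)$ is differentiable at $(x,y)$.

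Next I would record the pointwise convergence of the tail copulas. For fixed $h$ and $a \in \{-1,0,1\}$, Lemma~\ref{lem:R:incr} bounds $\abs{R_{s_n,t_n}(x_n + ah, y_n) - R_{s,t}(x + ah, y)}$ by
\[
	\abs{x_n - x}
	+ \{(x_n + ah) \wedge (x + ah)\}\{1 - R_{s_n,s}(1,1)\}
	+ \abs{y_n - y}
	+ (y_n \wedge y)\{1 - R_{t_n,t}(1,1)\}.
\]
Here the first two terms vanish because $x_n \to x$ and $1 - R_{s_n,s}(1,1) \to 0$, while the last two vanish either because $y > 0$ (so $1 - R_{t_n,t}(1,1) \to 0$) or because $y = 0$ (so $y_n \wedge y = 0$). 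Hence $R_{s_n,t_n}(x_n + ah, y_n) \to R_{s,t}(x + ah, y)$ for $a \in \{-1,0,1\}$.

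For the core step I would use that every tail copula is concave. For $0 < h < x/2$ and all $n$ so large that $x_n > h$, concavity of $u \mapsto R_{s_n,t_n}(u,y_n)$ places the derivative between the one-sided difference quotients,
\[
	\frac{R_{s_n,t_n}(x_n + h, y_n) - R_{s_n,t_n}(x_n, y_n)}{h}
	\le
	\dot{R}_{s_n,t_n;1}(x_n,y_n)
	\le
	\frac{R_{s_n,t_n}(x_n, y_n) - R_{s_n,t_n}(x_n - h, y_n)}{h}.
\]
Letting $n \to \infty$ with $h$ fixed and invoking the previous paragraph sandwiches $\liminf_n$ and $\limsup_n$ of $\dot{R}_{s_n,t_n;1}(x_n,y_n)$ between the forward quotient $h^{-1}\{R_{s,t}(x+h,y) - R_{s,t}(x,y)\}$ and the backward quotient $h^{-1}\{R_{s,t}(x,y) - R_{s,t}(x-h,y)\}$. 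Finally, letting $h \downarrow 0$ and using that $\dot{R}_{s,t;1}(x,y)$ exists, both quotients converge to $\dot{R}_{s,t;1}(x,y)$, which gives the claim.

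I expect the genuine difficulty to lie not in the squeezing but in the bookkeeping around differentiability at the comonotone diagonal. One must exclude, for large $n$, the degenerate configuration $s_n = t_n$ with $x_n = y_n$ (and its limiting counterpart $s = t$ with $x = y$), where the relevant tail copula reduces to the non-differentiable map $\min(\cdot,\cdot)$; this is precisely where the hypothesis $\rho_2((s,x),(t,y)) > 0$ is indispensable, entering through the triangle inequality and the explicit expression~\eqref{eq:rho2:sum}. This is the same degeneracy highlighted in Remark~\ref{rem:challenges}, namely that the partial derivatives of a tail copula cease to behave well as two coordinates coalesce onto the diagonal.
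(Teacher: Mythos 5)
Your proof is correct, and it reaches the conclusion by a genuinely more elementary route than the paper. You share the paper's preliminary reductions---Lemma~\ref{lem:rho2zero} to convert the $\rho_2$-hypotheses into $x_n \to x$, $y_n \to y$ and convergence of the relevant $R_{\cdot,\cdot}(1,1)$ to $1$, and Lemma~\ref{lem:R:incr} to obtain convergence of $R_{s_n,t_n}$ to $R_{s,t}$ at the perturbed points---but the core step differs. The paper splits into the cases $y=0$ (disposed of via the bound $\dot{R}_{s_n,t_n;1}(x_n,y_n) \le x_n^{-1}R_{s_n,t_n}(x_n,y_n) \le x_n^{-1}y_n \to 0$ from \eqref{eq:xdotRbound}) and $y>0$ (where it verifies that $R_{s_n,t_n}$ and $R_{s,t}$ are continuously differentiable on a common open neighbourhood of $(x,y)$ and invokes \citet[Theorem~25.7]{rockafellar:1970} on convergence of gradients of convex functions). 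Your one-sided difference-quotient sandwich is in effect a self-contained, pointwise proof of exactly the special case of that theorem that is needed; it buys a unified treatment of $y=0$ and $y>0$ and requires differentiability of the limit only at the single point $(x,y)$ rather than on a neighbourhood. The price is that you must check by hand that $\dot{R}_{s_n,t_n;1}(x_n,y_n)$ exists for large $n$, which you do correctly via the dichotomy ``$s_n \ne t_n$, so (R3) applies since $x_n>0$'' versus ``$s_n = t_n$ and $x_n \ne y_n$, so the comonotone copula is differentiable off the diagonal,'' the disjunction being forced for large $n$ by $\rho_2((s,x),(t,y))>0$ together with the triangle inequality for $\rho_2$. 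Both arguments ultimately rest on the same two ingredients---concavity of the bivariate tail copula in each argument and pointwise convergence $R_{s_n,t_n} \to R_{s,t}$---so the difference is one of packaging, but your packaging is arguably cleaner.
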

	
	\begin{proof}
		Since $x > 0$, the assumption $\rho_2((s_n,x_n),(s,x)) \to 0$ is equivalent to $x_n \to x$ and $R_{s_n,s}(1,1) \to 1$ by Lemma~\ref{lem:rho2zero}. We consider two cases: $y = 0$ and $y > 0$.
		
		First suppose $y = 0$, so that $\dot{R}_{s,t;1}(x,y) = 0$. Since $\rho_2((t_n,y_n),(t,y)) \to 0$, Lemma~\ref{lem:rho2zero} implies that $y_n \to 0$ as well. By \eqref{eq:xdotRbound}, we have $\dot{R}_{s_n,t_n;1}(x_n,y_n) \le x_n^{-1} R_{s_n,t_n}(x_n,y_n) \le x_n^{-1}y_n \to 0$ too, since $x_n \to x > 0$.
		
		Second suppose $y > 0$. By Lemma~\ref{lem:rho2zero}, the assumption $\rho_2((t_n,y_n),(t,y)) \to 0$ implies $y_n \to y$ and $R_{t_n,t}(1, 1) \to 1$. Lemma~\ref{lem:R:incr} ensures that the functions $f_n = R_{s_n,t_n}$ converge to the function $f = R_{s,t}$ uniformly on bounded subsets of $[0, \infty)^2$. Since $\rho_2((s,x),(t,y)) > 0$, we have $x \ne y$ or $R_{s,t}(1, 1) < 1$ (or both) in view of Eq.~\eqref{eq:rho2:sum}. In any case, the functions $f$ and $f_n$ (for sufficiently large $n$) are continuously differentiable in an open neigborhood $O$ of $(x, y)$: if $R_{s,t}(1, 1) < 1$ this is true by Condition~\ref{cond:Rdot}, while if $R_{s,t}(1, 1) = 1$, then this is true by the fact that $f = \min$ and $x \ne y$ and  for $f_n$ we use either Condition~\ref{cond:Rdot} or that $f_n = \min$. Let $N$ be a compact neighbourhood of $(x, y)$ contained in $O$. By \citet[Theorem~25.7]{rockafellar:1970}, the gradients of $f_n$ converge to the one of $f$ uniformly on $N$. Convergence of the partial derivatives as stated in the lemma follows.
	\end{proof}

	\begin{lemma}
		\label{lem:complete}
		If the semimetric space $(T, \rho_{2,T})$ is complete, then so is the semimetric space $(T \times [0, \infty), \rho_2)$
	\end{lemma}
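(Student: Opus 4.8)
The plan is to verify completeness directly, by showing that every $\rho_2$-Cauchy sequence in $T \times [0, \infty)$ converges; here $\rho_2$ is the $d=1$ standard deviation semimetric, so by Eq.~\eqref{eq:rho2:sum} it satisfies $[\rho_2((s,x),(t,y))]^2 = \abs{x-y} + 2\{(x\wedge y) - R_{s,t}(x,y)\}$. Let $((s_n, x_n))_n$ be Cauchy for $\rho_2$. First I would recover a limit for the second coordinate: since $R_{s,t}(x,y) \le x \wedge y$, the second summand in \eqref{eq:rho2:sum} is nonnegative, so $\abs{x_n - x_m} \le [\rho_2((s_n,x_n),(s_m,x_m))]^2$. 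Hence $(x_n)_n$ is Cauchy in $[0,\infty)$ and converges to some $x \ge 0$.

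If $x = 0$, I would conclude immediately. Fixing the index $1$ and using $R_{s_n,s_1}(x_n,0)=0$, Eq.~\eqref{eq:rho2:sum} gives $[\rho_2((s_n,x_n),(s_1,0))]^2 = x_n \to 0$, so the sequence converges to $(s_1,0) \in T \times [0,\infty)$.

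The substantive case is $x > 0$, where the aim is to transfer the Cauchy property to the first coordinate. The key inequality I would establish is
\[
	(x\wedge y)\{1 - R_{s,t}(1,1)\} \le \{(x\wedge y) - R_{s,t}(x,y)\} + \abs{x-y},
\]
obtained from monotonicity of $R_{s,t}$ in each argument, which gives $R_{s,t}(x,y) \le R_{s,t}(x\vee y, x\vee y) = (x\vee y)R_{s,t}(1,1)$ by homogeneity, together with $x\vee y - x\wedge y = \abs{x-y}$ and $R_{s,t}(1,1) \le 1$. Applying this with $(s,t,x,y) = (s_n,s_m,x_n,x_m)$, substituting the expression \eqref{eq:rho2:sum} for $(x_n\wedge x_m) - R_{s_n,s_m}(x_n,x_m)$, using $\abs{x_n-x_m} \le [\rho_2(\cdot,\cdot)]^2$, and recalling from \eqref{eq:rho2T} that $[\rho_{2,T}(s,t)]^2 = 2\{1-R_{s,t}(1,1)\}$, I would arrive at
\[
	(x_n\wedge x_m)\,[\rho_{2,T}(s_n,s_m)]^2 \le 2\,[\rho_2((s_n,x_n),(s_m,x_m))]^2 .
\]
Since $x_n \to x > 0$, for all large $m,n$ we have $x_n \wedge x_m \ge x/2$, so the right-hand side forces $\rho_{2,T}(s_n,s_m) \to 0$; thus $(s_n)_n$ is $\rho_{2,T}$-Cauchy. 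By the assumed completeness of $(T,\rho_{2,T})$ there is $s \in T$ with $\rho_{2,T}(s_n,s) \to 0$, equivalently $R_{s_n,s}(1,1) \to 1$. Finally, since $x>0$, $x_n \to x$, and $R_{s_n,s}(1,1) \to 1$, Lemma~\ref{lem:rho2zero} yields $\rho_2((s_n,x_n),(s,x)) \to 0$, completing the case and hence the proof.

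The main obstacle is precisely this $x>0$ case, and within it the displayed lower bound for $(x\wedge y) - R_{s,t}(x,y)$: the readily available bound \eqref{eq:rho2:sum:ineq} points the wrong way, so one cannot simply compare $\rho_2$ to $\rho_{2,T}$ from above. The resolution is to compare $R_{s,t}(x,y)$ with its value at the diagonal point $(x\vee y, x\vee y)$, where homogeneity makes the factor $1 - R_{s,t}(1,1)$ appear explicitly, while the positivity of the limit $x$ keeps the coefficient $x_n \wedge x_m$ bounded away from zero so that the $\rho_{2,T}$-distances can be controlled.
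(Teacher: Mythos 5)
Your proof is correct and follows essentially the same route as the paper's: extract the limit $x$ of the second coordinates from the $\abs{x_n-x_m}$ term in \eqref{eq:rho2:sum}, dispose of the case $x=0$ directly, and for $x>0$ use homogeneity of the tail copula to convert the Cauchy bound into a bound on $(x_n\wedge x_m)\{1-R_{s_n,s_m}(1,1)\}$, so that $(s_n)_n$ is $\rho_{2,T}$-Cauchy and completeness of $(T,\rho_{2,T})$ applies. The only (immaterial) difference is bookkeeping: you bound $R_{s,t}(x,y)$ above by $(x\vee y)R_{s,t}(1,1)$ via monotonicity, whereas the paper shifts the arguments to $(x,x)$ using the Lipschitz property and then uses $x-R_{s_k,s_\ell}(x,x)=x\{1-R_{s_k,s_\ell}(1,1)\}$.
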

	
	\begin{proof}
		Let $(s_n,x_n)_{n}$ be a $\rho_2$-Cauchy sequence in $T \times [0, \infty)$: for every $\eps > 0$, there exists an integer $n(\eps) \ge 1$ such that for all integers $k, \ell \ge n(\eps)$ we have $[\rho_2((s_k,x_k),(s_\ell,x_\ell))]^2 \le \eps$. We need to find $(s,x) \in T \times [0, \infty)$ with the property that $\rho_2((s_n,x_n),(s,x)) \to 0$ as $n \to \infty$.
		
		By \eqref{eq:rho2:sum}, we have $\abs{x_k-x_\ell} \le [\rho_2((s_k,x_k),(s_\ell,x_\ell))]^2 \le \eps$ whenever $k, \ell \ge n(\eps)$. Hence	$(x_n)_n$ is a Cauchy sequence in $[0, \infty)$. Since $[0, \infty)$ is complete, there exists $x \in [0, \infty)$ so that $x_n \to x$ as $n \to \infty$.
		
		Suppose first that $x = 0$. Then for every $s \in T$, we have $\rho_2((s_n,x_n),(s,0)) = x_n \to 0$ as $n \to \infty$, so that $(s_n,x_n)$ converges to $(s, 0)$. [Note that $\rho_2$ is a semimetric on $T \times [0, \infty)$ since $\rho((s,0),(t,0)) = 0$ for any $s, t \in T$, even if $\rho_{2,T}$ is a metric on $T$.]
		
		Suppose next that $x > 0$. Then by \eqref{eq:rho2:sum} we have $(x_k \wedge x_\ell) - R_{s_k,s_\ell}(x_k, x_\ell) \le \eps/2$ whenever $k, \ell \ge n(\eps)$. At the same time, we can increase $n(\eps)$ if necessary to ensure that also $\abs{x_k \wedge x_\ell - x} \le \eps/4$ and $\abs{R_{s_k,s_\ell}(x_k,x_\ell) - R_{s_k,s_\ell}(x,x)} \le \abs{x_k-x} + \abs{x_\ell-x} \le \eps/4$ for all integer $k,\ell \ge n(\eps)$. But then also $x(1 - R_{s_k,s_\ell}(1, 1)) = x - R_{s_k,s_\ell}(x,x) \le \eps$ for all $k, \ell \ge n(\eps)$. Hence, $(s_n)_n$ is a Cauchy sequence in $(T, \rho_{2,T})$. By assumption, we can find $s \in T$ such that $1 - R_{s_n,s}(1, 1) \to 1$ as $n \to \infty$. But then $(s_n, x_n)$ converges to $(s, x)$ with respect to $\rho_2$, as required.
	\end{proof}
	
	\begin{lemma}
		\label{lem:dotR:unifcont}
		Assume Condition~\ref{cond:Rdot} and assume that $(T, \rho_{2,T})$ is totally bounded. Let $0 < \delta < M < \infty$ and consider the set
		\[
		K = \{
		(s, x; t, y) \in (T \times [\delta, M]) \times (T \times [0, M]) :
		\rho_2((s, x), (t, y)) \ge \delta
		\}.
		\]
		For every $\eps > 0$, there exists $\eta > 0$ such that for all $(s_i,t_i;x_i,y_i) \in K$, $i \in \{1,2\}$, we have
		\begin{align*}
		&\rho_2((s_1,x_1),(s_2,x_2)) + \rho_2((t_1,y_1), (t_2,y_2))
		\le \eta \\
		\implies &
		\abs{ \dot{R}_{s_1,t_1;1}(x_1,y_1) - \dot{R}_{s_2,t_2;1}(x_2,y_2) }
		\le \eps.
		\end{align*}
	\end{lemma}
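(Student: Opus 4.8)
The plan is to derive the uniform continuity on $K$ from the \emph{pointwise} continuity of the map $(s,x;t,y) \mapsto \dot{R}_{s,t;1}(x,y)$ furnished by Lemma~\ref{lem:dotR:cont}, combined with a sequential compactness property of $K$. Because we are working with the semimetric $\rho_2$ rather than a genuine metric, I would argue by contradiction through sequences instead of invoking ``a continuous function on a compact set is uniformly continuous'' directly. Suppose the claim fails for some $\eps > 0$. Then there are two sequences of points of $K$, say $(s_1^{(n)},x_1^{(n)};t_1^{(n)},y_1^{(n)})$ and $(s_2^{(n)},x_2^{(n)};t_2^{(n)},y_2^{(n)})$, with $\rho_2((s_1^{(n)},x_1^{(n)}),(s_2^{(n)},x_2^{(n)})) + \rho_2((t_1^{(n)},y_1^{(n)}),(t_2^{(n)},y_2^{(n)})) \to 0$ while the corresponding difference of partial derivatives stays above $\eps$ for all $n$.

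The first step is to extract convergent subsequences, for which I must verify that $(T \times [0,M], \rho_2)$ is sequentially compact. Total boundedness follows from the splitting inequality $[\rho_2((s,x),(t,y))]^2 \le \abs{x-y} + (x\wedge y)[\rho_{2,T}(s,t)]^2$: taking the product of a finite $\rho_{2,T}$-net of $T$ (which exists by assumption) with a finite net of the compact interval $[0,M]$ yields a finite $\rho_2$-net. Completeness of $(T \times [0,\infty), \rho_2)$ is exactly Lemma~\ref{lem:complete}, whose hypothesis is supplied by (R2). Hence every sequence has a $\rho_2$-convergent subsequence, and passing to a subsequence I may assume $(s_1^{(n)},x_1^{(n)}) \to (s_1,x_1)$ and $(t_1^{(n)},y_1^{(n)}) \to (t_1,y_1)$ in $\rho_2$, with honest limit points in $T \times [0,\infty)$.

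Next I would check that the limit lies in $K$ and that both sequences share it. From \eqref{eq:rho2:sum} we have $\abs{x_1^{(n)} - x_1} \le [\rho_2((s_1^{(n)},x_1^{(n)}),(s_1,x_1))]^2 \to 0$, so $x_1 \ge \delta > 0$; and since $\rho_2$ is continuous (via the triangle inequality) while $\rho_2((s_1^{(n)},x_1^{(n)}),(t_1^{(n)},y_1^{(n)})) \ge \delta$ for every $n$, also $\rho_2((s_1,x_1),(t_1,y_1)) \ge \delta > 0$. Because the index-$2$ points are $\rho_2$-asymptotic to the index-$1$ points, the triangle inequality forces $(s_2^{(n)},x_2^{(n)}) \to (s_1,x_1)$ and $(t_2^{(n)},y_2^{(n)}) \to (t_1,y_1)$ as well. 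Now Lemma~\ref{lem:dotR:cont}, whose hypotheses $x_1 > 0$ and $\rho_2((s_1,x_1),(t_1,y_1)) > 0$ are met, applies to \emph{both} sequences with the common limit, giving
\[
	\dot{R}_{s_1^{(n)},t_1^{(n)};1}(x_1^{(n)},y_1^{(n)}) \to \dot{R}_{s_1,t_1;1}(x_1,y_1)
	\quad\text{and}\quad
	\dot{R}_{s_2^{(n)},t_2^{(n)};1}(x_2^{(n)},y_2^{(n)}) \to \dot{R}_{s_1,t_1;1}(x_1,y_1).
\]
Both converge to the same value, so their difference tends to $0$, contradicting that it exceeds $\eps$; this proves the lemma.

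The main obstacle, and the reason I route everything through sequences, is the semimetric nature of $\rho_2$ and $\rho_{2,T}$: limit ``points'' are determined only up to $\rho_2$-distance zero, so I must avoid any appeal to uniqueness of limits and instead let completeness provide honest representatives while Lemma~\ref{lem:dotR:cont} guarantees that $\dot{R}_{\cdot;1}$ cannot distinguish $\rho_2$-indistinguishable configurations (which is precisely why the two subsequences can share a single limit value). Establishing sequential compactness of $K$ for $\rho_2$ — total boundedness from the splitting inequality and completeness from Lemma~\ref{lem:complete} — is the technical heart; once it is in place, the pointwise continuity in Lemma~\ref{lem:dotR:cont} does the rest.
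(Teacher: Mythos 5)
Your proof is correct and follows essentially the same route as the paper's: both establish compactness of $K$ from total boundedness (via the splitting inequality) and completeness (via Lemma~\ref{lem:complete}), and then combine this with the pointwise continuity supplied by Lemma~\ref{lem:dotR:cont} to get uniform continuity. The only difference is that you unpack the standard fact ``continuous on compact implies uniformly continuous'' into an explicit sequential contradiction argument, which the paper simply invokes.
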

	
	\begin{proof}
		Consider the product space $\mathbb{D} = (T \times [0, \infty)) \times (T \times [0, \infty))$ equipped with the sum semimetric induced by the semimetric $\rho_2$ on $T \times [0, \infty)$. By Lemma~\ref{lem:complete}, the product space $\mathbb{D}$ is complete. The set $K$ in the statement is a closed subset of $\mathbb{D}$ and is thus complete as well. Moreover, $K$ is totally bounded, since $(T, \rho_{2,T})$ is totally bounded and since $\rho_2$ can be bounded as in \eqref{eq:rho2:sum:ineq}. As a consequence, $K$ is compact with respect to the sum semimetric induced by $\rho_2$.
		
		By Lemma~\ref{lem:dotR:cont}, the function $f$ with domain $\mathbb{D}_1 = \{(s,x;t,y) \in \mathbb{D} : \rho_2((s,x),(t,y)) > 0, x > 0 \}$ and defined by $f(s,x;t,y) = \dot{R}_{s,t;1}(x,y)$ is continuous. Since $K$ is a compact subset of $\mathbb{D}_1$, the function $f$ is uniformly continuous on $K$. But this is exactly the statement of the lemma.
	\end{proof}

	\section{Testing stationarity: further details}
	\label{app:testStat}
	
	We provide additional details to the results and calculations concerning the stationarity test in Section~\ref{sec:testStat} in the paper. Notation not defined here is as in that section.
	
	\subsection{The limit distribution of the test statistic (fixed $N$)}
	
	The population equivalent of the statistic $\hat{I}_n^{(N)}$ in Eq.~\eqref{eq:InN} is
	\[
	I^{(N)}(r_0/N)
	= \frac{1}{2\Delta} \sum_{r : 1 \le |r - r_0| \le \Delta}
	R_{r/N, r_0/N}(1, 1),
	\qquad r_0 \in \{\Delta, \ldots, N-\Delta\}.
	\]
	Under the null hypothesis of stationarity, $I^{(N)}(r_0/N)$ is constant in $r_0$. As a consequence, the test statistic in Eq.~\eqref{eq:DnN} is equal to
	\[
	D_n^{(N)}
	=
	\max_{r_0 = \Delta, \ldots, N-\Delta} V_n^{(N)}(r_0/N)
	-
	\min_{r_0 = \Delta, \ldots, N-\Delta} V_n^{(N)}(r_0/N)
	\]
	where
	\begin{align}
	\nonumber
	V_n^{(N)}(r_0/N)
	&=
	\sqrt{k} \left\{ \hat{I}^{(N)}(r_0/N) - I^{(N)}(r_0/N) \right\} \\
	&=
	\frac{1}{2\Delta} \sum_{r : 1 \le |r-r_0| \le \Delta}
	\sqrt{k}
	\left\{ \hat{R}_{n;r/N,r_0/N}(1, 1) - R_{r/N,r_0/N}(1, 1) \right\}
	\label{eq:VnN}
	\end{align}
	for $r_0 \in \{\Delta, \ldots, N-\Delta\}$. By the continuous mapping theorem, Corollary~\ref{cor:main2} implies that $V_n^{(N)} \dto V^{(N)}$ and thus $D_n^{(N)} \dto D^{(N)}$ as $n \to \infty$, with $V^{(N)}$ and $D^{(N)}$ as in Eq.~\eqref{eq:VN} and~\eqref{eq:DnNDN}, respectively. Under the alternative hypothesis, if $I^{(N)}(r_0/N)$ is not constant in $r_0$, we have
	\[
	\frac{D_n^{(N)}}{\sqrt{k}}
	\dto
	\max_{r_0 = \Delta, \ldots, N-\Delta} I^{(N)}(r_0/N)
	-
	\min_{r_0 = \Delta, \ldots, N-\Delta} I^{(N)}(r_0/N)
	> 0,
	\]
	and thus $D_n^{(N)} \dto \infty$ as $n \to \infty$.
	
	\subsection{The limit distribution of the test statistic ($N \to \infty$)}
	
	Suppose that $N = N_n \to \infty$ and $\Delta = \Delta_n \to \infty$ in such a way that $\delta_n = \Delta_n/N_n \to \delta \in (0, 1/2)$. For simplicity, assume that $\delta_n = \delta$ for all sufficiently large $n$. To handle the more general case where $\delta_n \ne \delta$ but $\delta_n \to \delta$, proceed by an extension of the argument below, considering the asymptotic distributions of $\hat{I}$ below jointly in $\delta'$ in a neighborhoud around $\delta$. Introduce
	\[
	\hat{I}(t_0) = \frac{1}{2\delta}
	\int_{t_0-\delta}^{t_0+\delta} \hat{R}_{n;t, t_0}(1, 1) \, \diff t
	\]
	together with
	\[
	D_{n} = \sqrt{k} \left[
	\sup_{t \in [\delta,1-\delta]} \hat{I}(t_0)
	-
	\inf_{t \in [\delta,1-\delta]} \hat{I}(t_0)
	\right].
	\]
	Also write
	\[
	I(t_0) = \frac{1}{2\delta}
	\int_{t_0-\delta}^{t_0+\delta} R_{t, t_0}(1, 1) \, \diff t.
	\]
	Since
	\begin{equation}
	\label{eq:Vn}
	V_n(t_0)
	:=
	\sqrt{k} \{ \hat{I}(t_0) - I(t_0) \}
	=
	\frac{1}{2\delta} \int_{t_0-\delta}^{t_0+\delta} \sqrt{k} \{ \hat{R}_{n;t,t_0}(1, 1) - R_{t,t_0}(1, 1) \} \, \diff t,
	\end{equation}
	the continuous mapping theorem implies that, in the space $\ell^\infty([\delta,1-\delta])$, we have weak convergence
	\[
	V_n = \sqrt{k} (\hat{I} - I) \dto V, \qquad n \to \infty
	\]
	where $V$ is defined by
	\begin{equation}
	\label{eq:V}
	V(t_0) = \frac{1}{2\delta}
	\int_{t_0-\delta}^{t_0+\delta} \hat{W}(t,t_0;1,1) \, \diff t,
	\qquad t_0 \in [\delta,1-\delta].
	\end{equation}
	
	Under the null hypothesis of stationarity, $I(t_0)$ does not depend on $t_0 \in [\delta,1-\delta]$, and thus
	\begin{align*}
	D_n
	&=
	\sup_{t \in [\delta,1-\delta]} V_n(t_0)
	-
	\inf_{t \in [\delta,1-\delta]} V_n(t_0) \\
	&\dto
	\sup_{t \in [\delta,1-\delta]} V(t_0)
	-
	\inf_{t \in [\delta,1-\delta]} V(t_0)
	= D,
	\qquad n \to \infty.
	\end{align*}
	
	To show weak convergence $D_n^{(N)} \dto D$ as $n \to \infty$ as well, it is then sufficient to show that $D_n^{(N)} = D_n + o_p(1)$ as $n \to \infty$. By the assumption that $R_{s,t}(1, 1)$ is continuous in $(s, t)$, the empirical tail copula process is asymptotically uniformly equicontinuous in probability; this follows from Theorem~\ref{thm:main:2} and Lemma~\ref{lem:dotR:unifcont}.  The same then holds for the process $V_n$, whence
	\[
	D_n =
	\max_{r_0 = \Delta, \ldots, N-\Delta} V_n(r_0/N)
	-
	\min_{r_0 = \Delta, \ldots, N-\Delta} V_n(r_0/N)
	+
	o_p(1),
	\qquad n \to \infty.
	\]
	Further, the sum over $r$ in $V_n^{(N)}$ in \eqref{eq:VnN} can be written as a Riemann approximation to the integral over $t$ in $V_n$ in Eq.~\eqref{eq:Vn} by means of intervals of length $1/N$. Again, by asymptotic uniform equicontinuity in probability of the empirical tail copula process, we find that
	\[
	\max_{r_0 = \Delta, \ldots, N-\Delta}
	\left| V_n^{(N)}(r_0/N) - V_n(r_0/N) \right|
	= o_p(1), \qquad n \to \infty.
	\]
	The relation $D_n^{(N)} = D_n + o_p(1)$ as $n \to \infty$ follows.

	\subsection{Calculating the covariance matrix of $V^{(N)}$ for the Smith model and the Pareto process}
	\label{subsec:VN}
	
	The Gaussian random vector $V^{(N)}$ in Eq.~\eqref{eq:VN} is a linear combination of the random variables $\hat{W}(r/N, r_0/N; 1, 1)$ for $r_0 \in \{\Delta, N-\Delta\}$ and $r \in \{0, 1, \ldots, N\}$ such that $1 \le |r-r_0| \le \Delta$. The covariance matrix of $V^{(N)}$ can thus be calculated from the one of $\hat{W}(s, t; 1, 1)$ for $s, t \in \{0, 1/N, \dots, N\}$. In turn, the Gaussian process $\hat{W}$ in Theorem~\ref{thm:main:2} is the result of a linear transformation applied to the Gaussian process $W$ with covariance function in Eq.~\eqref{eq:cov}. For brevity of notation, we will temporarily omit the arguments $x_j = 1$ and write $W(s, t; 1, 1) \equiv W(s, t)$ and $\dot{R}_{s,t;j}(1, 1) \equiv \dot{R}_{s,t;j}$ and so on. It follows that
	\begin{align*}
	\lefteqn{
		\expec\left[ \hat{W}(s,t) \, \hat{W}(s',t') \right]
	} \\
	&=
	\expec[W(s, t) W(s', t')]
	-
	\dot{R}_{s',t'; 1} \expec[W(s, t) W(s')]
	-
	\dot{R}_{s',t';2} \expec[W(s, t) W(t')] \\
	&\qquad \mbox{}
	-
	\dot{R}_{s,t;1} \expec[W(s) W(s', t')]
	+
	\dot{R}_{s,t;1} \dot{R}_{s',t';1} \expec[W(s) W(s')]
	+
	\dot{R}_{s,t;1} \dot{R}_{s',t';2} \expec[W(s) W(t')] \\
	&\qquad \mbox{}
	-
	\dot{R}_{s,t;2} \expec[W(t) W(s', t')]
	+
	\dot{R}_{s,t;2} \dot{R}_{s',t';1} \expec[W(t) W(s')]
	+
	\dot{R}_{s,t;2} \dot{R}_{s',t';2} \expec[W(t) W(t')] \\
	&=
	R_{s,t,s',t'} - \dot{R}_{s',t';1} R_{s,t,s'} - \dot{R}_{s',t';2} R_{s,t,t'} \\
	&\qquad \mbox{}
	- \dot{R}_{s,t;1} R_{s,s',t'}
	+ \dot{R}_{s,t;1} \dot{R}_{s',t';1} R_{s,s'}
	+ \dot{R}_{s,t;1} \dot{R}_{s',t';2} R_{s,t'} \\
	&\qquad \mbox{}
	- \dot{R}_{s,t;2} R_{t,s',t'}
	+ \dot{R}_{s,t;2} \dot{R}_{s',t';1} R_{t,s'}
	+ \dot{R}_{s,t;2} \dot{R}_{s',t';2} R_{t,t'}.
	\end{align*}
	
	We calculate the quantities $\dot{R}_{s,t;j}(1, 1)$ and $R_{\vc{t}}(\vc{1})$ for vectors $\vc{t} \in [0, 1]^d$ with $d$ distinct elements for the Smith model in Example~\ref{exs} with $r = 1$ and $\Sigma = 1$ and for the Pareto process in Example~\ref{exp}. In both cases, the bivariate tail copulas are of H\"usler--Reiss form, so that
	\[
	\dot{R}_{s,t;j}(1, 1) =
	\begin{cases}
	\bar{\Phi}(|s-t|/2), & \text{Smith model,} \\
	\bar{\Phi}(\sqrt{|s-t|}/2), & \text{Pareto process.}
	\end{cases}
	\]
	
	Next we calculate the tail dependence coefficients $R_{\vc{t}}(\vc{1})$. For the Smith model, we have, for a standard normal variable $Z$,
	\[
	R_{\vc{t}}(\vc{1})
	=
	\expec\left[
	\min_{j=1,\ldots,d}
	\exp\left(t_jZ - t_j^2/2\right)
	\right]
	=
	2 \bar{\Phi}\left(\tfrac{1}{2}(t_{(d)} - t_{(1)})\right)
	\]
	where $t_{(1)}$ and $t_{(d)}$ are the minimum and the maximum of $\vc{t}$, respectively. To show the second equality above, split the expectation according to the index $j$ where the minimum is attained and use the identity $\expec[\exp(uZ - u^2/2)f(Z)] = \expec[f(Z+u)]$ for scalar $u$ and measurable nonnegative function $f$.
	
	For the Pareto process, we have
	\[
	R_{\vc{t}}(\vc{1})
	=
	\expec\left[
	\min_{j=1,\ldots,d}
	\exp\left\{
	W'(t_j) - t_j^2/2
	\right\}
	\right]
	\]
	where $W'$ is a standard Wiener process. The expectation can be calculated by means of the following lemma, using that $\cov\{W'(s), W'(t)\} = \min(s, t)$ for $s, t \in [0, 1]$. If some $t_j$ equals $0$, then the covariance matrix of $(W'(t_j))_j$ is not positive definite, but the formula still holds, as can be seen for instance by replacing $\vc{t}$ by $\vc{t} + c$ for some $c > 0$ and exploiting the fact that $W'$ has stationary and independent increments.
	
	\begin{lemma}
		\label{lem:Eminexp}
		For a $d$-dimensional ($d \ge 2$) centered Gaussian random vector $X$ with positive definite covariance matrix $\Gamma = (\gamma_{jk})_{j,k=1}^d$, we have
		\begin{align*}
		\expec\left[
		\min_{j=1,\ldots,d}
		\exp\left(X_j - \tfrac{1}{2}\gamma_{jj}\right)
		\right]
		=
		\sum_{j=1}^d \Phi_{d-1}\left(
		\bigl(-\tfrac{1}{2}\var(\Delta_k^{(j)})\bigr)_{k:k\ne j};
		\cov(\Delta^{(j)})
		\right)
		\end{align*}
		where $\Phi_{r}(\,\cdot\,;\Sigma)$ is the cdf of the $r$-variate centered normal distribution with covariance matrix $\Sigma$ and where $\Delta^{(j)} = (\Delta^{(j)}_k)_{k:k\ne j}$ with $\Delta^{(j)}_k = X_j - X_k$.
	\end{lemma}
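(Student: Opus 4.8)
The plan is to decompose the minimum according to which coordinate attains it and then to evaluate each resulting expectation by an exponential change of measure. Writing $Y_j = X_j - \tfrac{1}{2}\gamma_{jj}$, the integrand is $\min_j \exp(Y_j) = \exp(\min_j Y_j)$ by monotonicity of the exponential. Since $\Gamma$ is positive definite, for $j \ne k$ the difference $Y_j - Y_k$ is a genuinely nondegenerate Gaussian variable (its variance is $\var(X_j - X_k) = \gamma_{jj} - 2\gamma_{jk} + \gamma_{kk} > 0$), so the ties $\{Y_j = Y_k\}$ have probability zero and almost surely exactly one coordinate attains the minimum. This lets me split
\[
	\min_{j=1,\ldots,d} \exp(Y_j)
	=
	\sum_{j=1}^d \exp(Y_j) \, \1\{ Y_j \le Y_k \text{ for all } k \ne j \},
\]
so that by linearity $\expec[\min_j \exp(Y_j)] = \sum_j \expec[\exp(X_j - \tfrac{1}{2}\gamma_{jj}) \, \1\{\cdots\}]$, where the event is $\{X_j - X_k \le \tfrac{1}{2}(\gamma_{jj}-\gamma_{kk}) \text{ for all } k \ne j\}$.

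Next I would invoke the standard Gaussian exponential-tilting identity: for any bounded measurable $f$ and any fixed vector $u$, one has $\expec[\exp(\langle u, X\rangle - \tfrac{1}{2} u'\Gamma u) f(X)] = \expec[f(X + \Gamma u)]$. Taking $u = e_j$ gives
\[
	\expec[\exp(X_j - \tfrac{1}{2}\gamma_{jj}) f(X)] = \expec[f(X + \gamma_{\cdot j})],
	\qquad \gamma_{\cdot j} = (\gamma_{kj})_{k=1}^d = \cov(X, X_j).
\]
Applying this with $f$ the indicator above, the $j$th term equals the probability that $(X_j + \gamma_{jj}) - (X_k + \gamma_{kj}) \le \tfrac{1}{2}(\gamma_{jj} - \gamma_{kk})$ for all $k \ne j$; the delicate point in the bookkeeping is to apply the shift $\gamma_{\cdot j}$ to every coordinate of $X$, including $X_j$ itself.

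The crux is then a short algebraic simplification of the shifted constraint. Rearranging, the $k$th inequality reads $X_j - X_k \le \gamma_{kj} - \tfrac{1}{2}\gamma_{jj} - \tfrac{1}{2}\gamma_{kk}$, and since $\var(\Delta_k^{(j)}) = \var(X_j - X_k) = \gamma_{jj} - 2\gamma_{jk} + \gamma_{kk}$, the right-hand side is exactly $-\tfrac{1}{2}\var(\Delta_k^{(j)})$. Hence the $j$th term equals $\prob[\Delta_k^{(j)} \le -\tfrac{1}{2}\var(\Delta_k^{(j)}) \text{ for all } k \ne j]$. Since $\Delta^{(j)} = (\Delta_k^{(j)})_{k \ne j}$ is a centered $(d-1)$-variate Gaussian vector with covariance matrix $\cov(\Delta^{(j)})$, this probability is precisely $\Phi_{d-1}\bigl((-\tfrac{1}{2}\var(\Delta_k^{(j)}))_{k \ne j}; \cov(\Delta^{(j)})\bigr)$, and summing over $j = 1, \ldots, d$ yields the claim. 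The identification of the threshold with $-\tfrac{1}{2}\var(\Delta_k^{(j)})$ is the one genuinely non-routine observation; everything else is the decomposition and the tilting identity.
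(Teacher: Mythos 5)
Your proposal is correct and follows essentially the same route as the paper: decompose the minimum according to the index attaining it, apply the Gaussian tilting identity $\expec[\exp(a'X - \tfrac{1}{2}a'\Gamma a)f(X)] = \expec[f(X+\Gamma a)]$ with $a = e_j$, and identify the shifted threshold with $-\tfrac{1}{2}\var(\Delta_k^{(j)})$. Your explicit remark that ties occur with probability zero (so the indicator decomposition is exhaustive and disjoint almost surely) is a detail the paper leaves implicit, but otherwise the two arguments coincide.
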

	
	\begin{proof}
		For every vector $a \in \reals^d$ and for every nonnegative measurable function $f$, we have $\expec[\exp(a^\top X - \tfrac{1}{2} a' \Gamma a) f(X)] = \expec[f(X + \Gamma a)]$. It follows that
		\begin{align*}
		\lefteqn{
			\expec\left[
			\min_{j=1,\ldots,d}
			\exp\left(X_j - \tfrac{1}{2} \gamma_{jj}\right)
			\right]
		} \\
		&= \sum_{j=1}^d \expec\left[
		\exp\left(X_j - \tfrac{1}{2} \gamma_{jj}\right) \,
		\mathds{1} \left\{
		X_j - \tfrac{1}{2} \gamma_{jj}
		\le \min_{k:k\ne j} (X_k - \tfrac{1}{2} \gamma_{kk})
		\right\}
		\right] \\
		&=
		\sum_{j=1}^d \prob \left[
		X_j + \tfrac{1}{2}\gamma_{jj}
		\le \min_{k:k\ne j} \{X_k + \gamma_{kj} - \tfrac{1}{2}\gamma_{kk}\}
		\right]
		\end{align*}
		which can be rearranged into the stated expression.
	\end{proof}
	
	Note that the expression in Lemma~\ref{lem:Eminexp} is akin to the one of the stable tail dependence function or exponent function of the H\"usler--Reiss copula in Remark~2.5 in \citet{nikoloulopoulos+j+l:2009}.

	\subsection{Nonparametric estimation the covariance matrix of $V^{(N)}$}
	\label{subsec:VNnonparam}
	
	To estimate the covariance matrix of $V^{(N)}$ in Eq.~\eqref{eq:VN} we start as Section~\ref{subsec:VN} above and express $V^{(N)}$ as a linear transformation of the centered Gaussian process $\hat{W}$, which is in turn a linear transformation of the centered Gaussian process $W$. The covariance function of $W$ in Eq.~\eqref{eq:cov} is given entirely in terms of the tail copula, which we estimate by the empirical tail copula.
	
	The coefficients appearing in the linear transformation are all known except for the partial derivatives $\dot{R}_{s,t;j}(1, 1)$. These can be estimated by a finite differencing via
	\[
	\frac{1}{2\eta} \left\{
	\hat{R}_{n;s,t}(1+\eta, 1) - \hat{R}_{n;s,t}(1-\eta, 1)
	\right\}
	\]
	for $j = 1$ and similarly for $j = 2$; here $\eta = \eta_n > 0$ is such that $\eta > 0$ but $k^{1/2} \eta \to \infty$ as $n \to \infty$. Under the null hypothesis of stationarity, the partial derivative only depends on $(s, t)$ via $s-t$. This information can exploited by averaging the estimator over all pairs $(s, t)$ with the same value of $s - t$. Also, since $0 \le \dot{R} \le 1$, we can enforce the same constraints on the estimate.
	
	The estimate of the covariance matrix of a finite-dimensional distribution of $W$ based on the empirical tail copula is automatically symmetric and positive semi-definite. This can be seen by writing the empirical tail copula as a matrix of second-order moments of indicator variables. The symmetric and positive semi-definite character is then inherited by the estimate of the covariance matrix of $V^{(N)}$ by using the formula that $\cov(A X) = A \cov(X) A'$ for a random vector $X$ and a real matrix $A$ with the appropriate number of columns.
	
	Under the null hypothesis of stationarity, the covariance matrix of $V^{(N)}$ is Toeplitz, which is in general not the case for the estimate, although we found in the simulations that it was usually not far from being so, perhaps because of the regularisation of the estimate of $\dot{R}$ as explained above. To enforce the estimated covariance matrix to be Toeplitz, we averaged out over the subdiagonals. This operation never destroyed the positive definite property, although it is not guaranteed to do so. If needed, positive definiteness can be restored by adding a diagonal matrix with a small but sufficiently large diagonal depending on the smallest eigenvalue of the estimated matrix.

	\subsection{Calculating $p$-values}
	The $p$-value associated to the test statistic statistic $D_n^{(N)}$ in Eq.~\eqref{eq:DnN} and computed from an estimate $\hat{\Sigma}_n^{(N)}$ of the covariance matrix of $V^{(N)}$ as described in Section~\ref{subsec:VNnonparam} is given by
	\[
	1 - F\bigl(D_n^{(N)}; \hat{\Sigma}_n^{(N)}\bigr),
	\]
	where for $q \in [0, \infty)$ and for a covariance matrix $\Gamma$ of dimension $m \times m$ with $m \ge 2$, we put
	\[
	F(q; \Gamma) = \prob[\max(U) - \min(U) \le q]
	\]
	with $U$ a centered multivariate normal random vector with covariance matrix $\Gamma$. The latter probability can be calculated as follows, provided $\Gamma$ is positive definite: partitioning the event according to the index $i \in \{1,\ldots,m\}$ at which the maximum is realized, we have
	\begin{align}
	\label{eq:range(U)}
	\prob[ \max(U) - \min(U) \le q ]
	&= \sum_{i=1}^m \prob\left[
	\forall j \in \{1,\ldots,m\} \setminus \{i\} :
	-q < U_j - U_i < 0
	\right] \\
	\nonumber
	&= \sum_{i=1}^m \Phi_{m-1}\left(
	[-q, 0]^{m-1} ;
	\cov\left((U_j - U_i)_{j:j \ne i}\right)
	\right),
	\end{align}
	in terms of the $r$-variate centered Gaussian probability measure $\Phi_r$ with the given covariance matrix. Substituting $q = D_n^{(N)}$ and $\Gamma = \hat{\Sigma}_n^{(N)}$ then yields the $p$-value appearing in the middle row in Figure~\ref{fig:testStat} in the paper. It is also on the basis of these $p$-values that the hypothesis test is implemented of which the power is shown on the bottom row of the same figure.
	
	In the numerical experiments, it was the computation of the $p$-value that was the most time-consuming, especially if $N$ is large: for $N = 20$ and $\Delta = 2$, the dimension of $V^{(N)}$ is $N - 2\Delta + 1 = 17 = m$, so that computing a single $p$-value requires $m = 17$ evaluations of the multivariate Gaussian probability measure of dimension $m-1 = 16$. We used the function \textsf{sadmvn} of the R-package \textsf{mnormt} to perform this task, which is in turn based on an algorithm described in \citet{genz:1992}. The calculation of a single $p$-value then took about $1$ second on a standard laptop, which is not that much, but of course quickly adds up if to be repeated over many samples.

	\subsection{Calculating the probability density function of the limit $D^{(N)}$}
	
	The probability density functions at the top row of Figure~\ref{fig:testStat} are, upon rescaling by $2\Delta\sqrt{k}$, the ones of the limit variable $D^{(N)} = \max(V^{(N)}) - \min(V^{(N)})$ for the Smith model and the Pareto process. The cumulative distribution function of $D^{(N)}$ is given in Eq.~\eqref{eq:range(U)}, where for $\Gamma$ we substitute the covariance matrix of $V^{(N)}$ for the given model, as calculated  in Subsection~\ref{subsec:VN}. From the cdf we then proceed to calculate the pdf by numeric differentiation (finite differencing). An alternative, which we did not try, would be to analytically differentiate Eq.~\eqref{eq:range(U)} with respect to $q$ and calculate the resulting expression numerically.

\end{appendix}

\section*{Acknowledgements}
We are very grateful to an Associate Editor and three Referees for various insightful comments that led to this improved version of the manuscript.

John Einmahl holds the Arie Kapteyn Chair 2019--2022 and gratefully acknowledges the corresponding research support.

\bibliographystyle{imsart-nameyear}
\bibliography{biblio}

\begin{thebibliography}{37}

\bibitem[\protect\citeauthoryear{Azzalini and Genz}{2020}]{azzalini+g:2020}
\begin{bmanual}[author]
\bauthor{\bsnm{Azzalini},~\bfnm{Adelchi}\binits{A.}} \AND
  \bauthor{\bsnm{Genz},~\bfnm{Alan}\binits{A.}}
(\byear{2020}).
\btitle{The {R} package \texttt{mnormt}: The multivariate normal and $t$
  distributions (version 2.0.2)}.
\end{bmanual}
\endbibitem

\bibitem[\protect\citeauthoryear{B\"ucher and Dette}{2013}]{bucher+d:2013}
\begin{barticle}[author]
\bauthor{\bsnm{B\"ucher},~\bfnm{Axel}\binits{A.}} \AND
  \bauthor{\bsnm{Dette},~\bfnm{Holger}\binits{H.}}
(\byear{2013}).
\btitle{Multiplier bootstrap of tail copulas with applications}.
\bjournal{Bernoulli}
\bvolume{19}
\bpages{1655--1687}.
\end{barticle}
\endbibitem

\bibitem[\protect\citeauthoryear{Chiapino, Sabourin and
  Segers}{2019}]{chiapino+s+s:2019}
\begin{barticle}[author]
\bauthor{\bsnm{Chiapino},~\bfnm{M.}\binits{M.}},
  \bauthor{\bsnm{Sabourin},~\bfnm{Anne}\binits{A.}} \AND
  \bauthor{\bsnm{Segers},~\bfnm{Johan}\binits{J.}}
(\byear{2019}).
\btitle{Identifying groups of variables with the potential of being large
  simultaneously}.
\bjournal{Extremes}
\bvolume{22}
\bpages{193--222}.
\end{barticle}
\endbibitem

\bibitem[\protect\citeauthoryear{Coles}{1993}]{coles:1993}
\begin{barticle}[author]
\bauthor{\bsnm{Coles},~\bfnm{Stuart~G.}\binits{S.~G.}}
(\byear{1993}).
\btitle{Regional modelling of extreme storms via max-stable processes}.
\bjournal{Journal of the Royal Statistical Society. Series B (Methodological)}
\bvolume{55}
\bpages{797--816}.
\end{barticle}
\endbibitem

\bibitem[\protect\citeauthoryear{de~Haan and Lin}{2001}]{dhl1}
\begin{barticle}[author]
\bauthor{\bparticle{de} \bsnm{Haan},~\bfnm{L.}\binits{L.}} \AND
  \bauthor{\bsnm{Lin},~\bfnm{T.}\binits{T.}}
(\byear{2001}).
\btitle{On convergence towards an extreme value distribution in ${C}[0, 1]$}.
\bjournal{The Annals of Probability}
\bvolume{29}
\bpages{467--483}.
\end{barticle}
\endbibitem

\bibitem[\protect\citeauthoryear{de~Haan and Lin}{2003}]{dhl2}
\begin{barticle}[author]
\bauthor{\bparticle{de} \bsnm{Haan},~\bfnm{L.}\binits{L.}} \AND
  \bauthor{\bsnm{Lin},~\bfnm{T.}\binits{T.}}
(\byear{2003}).
\btitle{Weak consistency of extreme value estimators in ${C}[0, 1]$}.
\bjournal{The Annals of Statistics}
\bvolume{31}
\bpages{1996--2012}.
\end{barticle}
\endbibitem

\bibitem[\protect\citeauthoryear{Dombry}{2017}]{dombry:2017}
\begin{bmisc}[author]
\bauthor{\bsnm{Dombry},~\bfnm{Cl\'{e}ment}\binits{C.}}
(\byear{2017}).
\btitle{Personal communication}.
\end{bmisc}
\endbibitem

\bibitem[\protect\citeauthoryear{Dombry and Ribatet}{2015}]{dombry+r:2015}
\begin{barticle}[author]
\bauthor{\bsnm{Dombry},~\bfnm{Cl\'ement}\binits{C.}} \AND
  \bauthor{\bsnm{Ribatet},~\bfnm{Mathieu}\binits{M.}}
(\byear{2015}).
\btitle{Functional regular variations, {Pareto} processes and peaks over
  threshold}.
\bjournal{Statistics and Its Interface}
\bvolume{8}
\bpages{9--17}.
\end{barticle}
\endbibitem

\bibitem[\protect\citeauthoryear{Dombry, Ribatet and
  Stoev}{2018}]{dombry+r+s:2018}
\begin{barticle}[author]
\bauthor{\bsnm{Dombry},~\bfnm{Cl\'ement}\binits{C.}},
  \bauthor{\bsnm{Ribatet},~\bfnm{Mathieu}\binits{M.}} \AND
  \bauthor{\bsnm{Stoev},~\bfnm{Stilian}\binits{S.}}
(\byear{2018}).
\btitle{Probabilities of concurrent extremes}.
\bjournal{Journal of the American Statistical Association}
\bvolume{113}
\bpages{1565--1582}.
\end{barticle}
\endbibitem

\bibitem[\protect\citeauthoryear{Drees and Huang}{1998}]{dh:1998}
\begin{barticle}[author]
\bauthor{\bsnm{Drees},~\bfnm{Huang}\binits{H.}} \AND
  \bauthor{\bsnm{Huang},~\bfnm{Xin}\binits{X.}}
(\byear{1998}).
\btitle{Best attainable rates of convergence for estimators of the stable tail
  dependence function}.
\bjournal{Journal of Multivariate Analysis}
\bvolume{64}
\bpages{25--46}.
\end{barticle}
\endbibitem

\bibitem[\protect\citeauthoryear{Einmahl, Gantner and
  Sawitzki}{2010}]{einmahl+g+s:2010}
\begin{barticle}[author]
\bauthor{\bsnm{Einmahl},~\bfnm{John H.~J.}\binits{J.~H.~J.}},
  \bauthor{\bsnm{Gantner},~\bfnm{Maria}\binits{M.}} \AND
  \bauthor{\bsnm{Sawitzki},~\bfnm{G\"{u}nther}\binits{G.}}
(\byear{2010}).
\btitle{Asymptotics for the shorth plot}.
\bjournal{Journal of Statistical Planning and Inference}
\bvolume{140}
\bpages{3003--3012}.
\end{barticle}
\endbibitem

\bibitem[\protect\citeauthoryear{Einmahl, Kiriliouk and
  Segers}{2018}]{einmahl+k+s:2017}
\begin{barticle}[author]
\bauthor{\bsnm{Einmahl},~\bfnm{John H.~J.}\binits{J.~H.~J.}},
  \bauthor{\bsnm{Kiriliouk},~\bfnm{Anna}\binits{A.}} \AND
  \bauthor{\bsnm{Segers},~\bfnm{Johan}\binits{J.}}
(\byear{2018}).
\btitle{A continuous updating weighted least squares estimator of tail
  dependence in high dimensions}.
\bjournal{Extremes}
\bvolume{21}
\bpages{205--233}.
\end{barticle}
\endbibitem

\bibitem[\protect\citeauthoryear{Einmahl, Krajina and
  Segers}{2012}]{einmahl+k+s:2012}
\begin{barticle}[author]
\bauthor{\bsnm{Einmahl},~\bfnm{John H.~J.}\binits{J.~H.~J.}},
  \bauthor{\bsnm{Krajina},~\bfnm{A.}\binits{A.}} \AND
  \bauthor{\bsnm{Segers},~\bfnm{Johan}\binits{J.}}
(\byear{2012}).
\btitle{An {M}-estimator for tail dependence in arbitrary dimensions}.
\bjournal{The Annals of Statistics}
\bvolume{40}
\bpages{1764--1793}.
\end{barticle}
\endbibitem

\bibitem[\protect\citeauthoryear{Einmahl and Lin}{2006}]{einmahl+l:2006}
\begin{barticle}[author]
\bauthor{\bsnm{Einmahl},~\bfnm{John H.~J.}\binits{J.~H.~J.}} \AND
  \bauthor{\bsnm{Lin},~\bfnm{Tao}\binits{T.}}
(\byear{2006}).
\btitle{Asymptotic normality of extreme value estimators on ${C}[0, 1]$}.
\bjournal{The Annals of Statistics}
\bvolume{34}
\bpages{469--492}.
\end{barticle}
\endbibitem

\bibitem[\protect\citeauthoryear{Falk}{2019}]{falk:2019}
\begin{bbook}[author]
\bauthor{\bsnm{Falk},~\bfnm{Michael}\binits{M.}}
(\byear{2019}).
\btitle{Multivariate Extreme Value Theory and D-Norms}.
\bpublisher{Springer}, \baddress{Cham}.
\end{bbook}
\endbibitem

\bibitem[\protect\citeauthoryear{Ferreira and de~Haan}{2014}]{ferreira+dh:2014}
\begin{barticle}[author]
\bauthor{\bsnm{Ferreira},~\bfnm{Ana}\binits{A.}} \AND \bauthor{\bparticle{de}
  \bsnm{Haan},~\bfnm{Laurens}\binits{L.}}
(\byear{2014}).
\btitle{The generalized {Pareto} process; with a view towards application and
  simulation}.
\bjournal{Bernoulli}
\bvolume{20}
\bpages{1717--1737}.
\end{barticle}
\endbibitem

\bibitem[\protect\citeauthoryear{Genz}{1992}]{genz:1992}
\begin{barticle}[author]
\bauthor{\bsnm{Genz},~\bfnm{Alan}\binits{A.}}
(\byear{1992}).
\btitle{Numerical Computation of multivariate normal probabilities}.
\bjournal{Journal of Computational and Graphical Statistics}
\bvolume{1}
\bpages{141--149}.
\end{barticle}
\endbibitem

\bibitem[\protect\citeauthoryear{Gin\'e, Hahn and Vatan}{1990}]{gine:1990}
\begin{barticle}[author]
\bauthor{\bsnm{Gin\'e},~\bfnm{E.}\binits{E.}},
  \bauthor{\bsnm{Hahn},~\bfnm{M.}\binits{M.}} \AND
  \bauthor{\bsnm{Vatan},~\bfnm{P.}\binits{P.}}
(\byear{1990}).
\btitle{Max-infinitely divisible and max-stable sample continuous processes}.
\bjournal{Probability Theory and Related Fields}
\bvolume{87}
\bpages{139--165}.
\end{barticle}
\endbibitem

\bibitem[\protect\citeauthoryear{Gomes, de~Haan and Pestana}{2004}]{gdhp:2004}
\begin{barticle}[author]
\bauthor{\bsnm{Gomes},~\bfnm{M.~I.}\binits{M.~I.}}, \bauthor{\bparticle{de}
  \bsnm{Haan},~\bfnm{L.}\binits{L.}} \AND
  \bauthor{\bsnm{Pestana},~\bfnm{D.}\binits{D.}}
(\byear{2004}).
\btitle{Joint exceedances of the {ARCH} process}.
\bjournal{Journal of Applied Probability}
\bvolume{41}
\bpages{919--926}.
\end{barticle}
\endbibitem

\bibitem[\protect\citeauthoryear{Gudendorf and Segers}{2012}]{gudendorf+s:2012}
\begin{barticle}[author]
\bauthor{\bsnm{Gudendorf},~\bfnm{Gordon}\binits{G.}} \AND
  \bauthor{\bsnm{Segers},~\bfnm{Johan}\binits{J.}}
(\byear{2012}).
\btitle{Nonparametric estimation of multivariate extreme-value copulas}.
\bjournal{Journal of Statistical Planning and Inference}
\bvolume{142}
\bpages{3073--3085}.
\end{barticle}
\endbibitem

\bibitem[\protect\citeauthoryear{H\"usler and Reiss}{1989}]{husler+r:1989}
\begin{barticle}[author]
\bauthor{\bsnm{H\"usler},~\bfnm{J\"urg}\binits{J.}} \AND
  \bauthor{\bsnm{Reiss},~\bfnm{Rolf-Dieter}\binits{R.-D.}}
(\byear{1989}).
\btitle{Maxima of normal random vectors: between independence and complete
  dependence}.
\bjournal{Statistics \& Probability Letters}
\bvolume{7}
\bpages{283--286}.
\end{barticle}
\endbibitem

\bibitem[\protect\citeauthoryear{Kabluchko, Schlather and
  de~Haan}{2009}]{kabluchko+s+dh:2009}
\begin{barticle}[author]
\bauthor{\bsnm{Kabluchko},~\bfnm{Zakhar}\binits{Z.}},
  \bauthor{\bsnm{Schlather},~\bfnm{Martin}\binits{M.}} \AND
  \bauthor{\bparticle{de} \bsnm{Haan},~\bfnm{Laurens}\binits{L.}}
(\byear{2009}).
\btitle{Stationary max-stable fields associated to negative definite
  functions}.
\bjournal{The Annals of Probability}
\bvolume{37}
\bpages{2042--2065}.
\end{barticle}
\endbibitem

\bibitem[\protect\citeauthoryear{Koch}{2017}]{koch:2017}
\begin{barticle}[author]
\bauthor{\bsnm{Koch},~\bfnm{Erwan}\binits{E.}}
(\byear{2017}).
\btitle{Spatial risk measures and applications to max-stable processes}.
\bjournal{Extremes}
\bvolume{20}
\bpages{635--670}.
\end{barticle}
\endbibitem

\bibitem[\protect\citeauthoryear{Mercadier and
  Roustant}{2019}]{mercadier+r:2019}
\begin{barticle}[author]
\bauthor{\bsnm{Mercadier},~\bfnm{C\'ecile}\binits{C.}} \AND
  \bauthor{\bsnm{Roustant},~\bfnm{Olivier}\binits{O.}}
(\byear{2019}).
\btitle{The tail dependograph}.
\bjournal{Extremes}
\bvolume{22}
\bpages{343--372}.
\end{barticle}
\endbibitem

\bibitem[\protect\citeauthoryear{Nikoloulopoulos, Joe and
  Li}{2009}]{nikoloulopoulos+j+l:2009}
\begin{barticle}[author]
\bauthor{\bsnm{Nikoloulopoulos},~\bfnm{Aristidis~K.}\binits{A.~K.}},
  \bauthor{\bsnm{Joe},~\bfnm{Harry}\binits{H.}} \AND
  \bauthor{\bsnm{Li},~\bfnm{Haijun}\binits{H.}}
(\byear{2009}).
\btitle{Extreme value properties of multivariate $t$ copulas}.
\bjournal{Extremes}
\bvolume{12}
\bpages{129--148}.
\end{barticle}
\endbibitem

\bibitem[\protect\citeauthoryear{Peng and Qi}{2008}]{peng+q:2008}
\begin{barticle}[author]
\bauthor{\bsnm{Peng},~\bfnm{Liang}\binits{L.}} \AND
  \bauthor{\bsnm{Qi},~\bfnm{Yongcheng}\binits{Y.}}
(\byear{2008}).
\btitle{Bootstrap approximation of tail dependence function}.
\bjournal{Journal of Multivariate Analysis}
\bvolume{99}
\bpages{1807--1824}.
\end{barticle}
\endbibitem

\bibitem[\protect\citeauthoryear{Ressel}{2013}]{ressel:2013}
\begin{barticle}[author]
\bauthor{\bsnm{Ressel},~\bfnm{Paul}\binits{P.}}
(\byear{2013}).
\btitle{Homogeneous distributions—{And} a spectral representation of
  classical mean values and stable tail dependence functions}.
\bjournal{Journal of Multivariate Analysis}
\bvolume{117}
\bpages{246--256}.
\end{barticle}
\endbibitem

\bibitem[\protect\citeauthoryear{Ribatet}{2020}]{ribatet:2020}
\begin{bmanual}[author]
\bauthor{\bsnm{Ribatet},~\bfnm{Mathieu}\binits{M.}}
(\byear{2020}).
\btitle{\texttt{SpatialExtremes}: Modelling Spatial Extremes}
\bnote{R package version 2.0-9}.
\end{bmanual}
\endbibitem

\bibitem[\protect\citeauthoryear{Rockafellar}{1970}]{rockafellar:1970}
\begin{bbook}[author]
\bauthor{\bsnm{Rockafellar},~\bfnm{R.~Tyrrell}\binits{R.~T.}}
(\byear{1970}).
\btitle{Convex Analysis}.
\bpublisher{Princeton University Press}, \baddress{Princeton}.
\end{bbook}
\endbibitem

\bibitem[\protect\citeauthoryear{Ruymgaart}{1973}]{r73}
\begin{bbook}[author]
\bauthor{\bsnm{Ruymgaart},~\bfnm{F.~H.}\binits{F.~H.}}
(\byear{1973}).
\btitle{Asymptotic Theory of Rank Tests for Independence}.
\bpublisher{Mathematisch Centrum, Amsterdam Mathematical Centre Tracts, 43}.
\end{bbook}
\endbibitem

\bibitem[\protect\citeauthoryear{Schlather}{2002}]{schlather:2002}
\begin{barticle}[author]
\bauthor{\bsnm{Schlather},~\bfnm{Martin}\binits{M.}}
(\byear{2002}).
\btitle{Models for stationary max-stable random fields}.
\bjournal{Extremes}
\bvolume{5}
\bpages{33--44}.
\end{barticle}
\endbibitem

\bibitem[\protect\citeauthoryear{Schmidt and
  Stadtm\"uller}{2006}]{schmidt+s:2006}
\begin{barticle}[author]
\bauthor{\bsnm{Schmidt},~\bfnm{Rafael}\binits{R.}} \AND
  \bauthor{\bsnm{Stadtm\"uller},~\bfnm{Ulrich}\binits{U.}}
(\byear{2006}).
\btitle{Non-parametric estimation of tail dependence}.
\bjournal{Scandinavian Journal of Statistics}
\bvolume{33}
\bpages{307--355}.
\end{barticle}
\endbibitem

\bibitem[\protect\citeauthoryear{Segers}{2012}]{segers2012}
\begin{barticle}[author]
\bauthor{\bsnm{Segers},~\bfnm{Johan}\binits{J.}}
(\byear{2012}).
\btitle{Asymptotics of empirical copula processes under non-restrictive
  smoothness assumptions}.
\bjournal{Bernoulli}
\bvolume{18}
\bpages{764--782}.
\bdoi{10.3150/11-BEJ387}
\end{barticle}
\endbibitem

\bibitem[\protect\citeauthoryear{Smith}{1990}]{smith:1990}
\begin{bmisc}[author]
\bauthor{\bsnm{Smith},~\bfnm{Richard~L.}\binits{R.~L.}}
(\byear{1990}).
\btitle{Max-stable processes and spatial extremes}.
\bnote{Unpublished manuscript}.
\end{bmisc}
\endbibitem

\bibitem[\protect\citeauthoryear{{R Core Team}}{2018}]{R}
\begin{bmanual}[author]
\bauthor{\bsnm{{R Core Team}}}
(\byear{2018}).
\btitle{R: A Language and Environment for Statistical Computing}
\bpublisher{R Foundation for Statistical Computing},
\baddress{Vienna, Austria}.
\end{bmanual}
\endbibitem

\bibitem[\protect\citeauthoryear{van~der Vaart}{1998}]{vandervaart:1998}
\begin{bbook}[author]
\bauthor{\bparticle{van~der} \bsnm{Vaart},~\bfnm{Aad~W.}\binits{A.~W.}}
(\byear{1998}).
\btitle{Asymptotic Statistics}.
\bpublisher{Cambridge University Press}, \baddress{Cambridge}.
\end{bbook}
\endbibitem

\bibitem[\protect\citeauthoryear{van~der Vaart and
  Wellner}{1996}]{vandervaart+w:1996}
\begin{bbook}[author]
\bauthor{\bparticle{van~der} \bsnm{Vaart},~\bfnm{Aad~W.}\binits{A.~W.}} \AND
  \bauthor{\bsnm{Wellner},~\bfnm{Jon~A.}\binits{J.~A.}}
(\byear{1996}).
\btitle{Weak Convergence and Empirical Processes. With Applications to
  Statistics}.
\bpublisher{Springer Sciences+Business Media}, \baddress{New York}.
\end{bbook}
\endbibitem

\end{thebibliography}
\end{document}